\newcommand\R{{\mathbf{R}}}
\newcommand\C{{\mathbf{C}}}
\newcommand\Z{{\mathbf{Z}}}
\renewcommand\P{{\mathbf{P}}}
\newcommand\E{{\mathbf{E}}}
\newcommand\eps{{\varepsilon}}
\newcommand\tr{\operatorname{trace}}
\numberwithin{equation}{section} 
\theoremstyle{plain}
\newtheorem{theorem}[subsection]{Theorem}
\newtheorem{proposition}[subsection]{Proposition}
\newtheorem{lemma}[subsection]{Lemma}
\newtheorem{corollary}[subsection]{Corollary}
\newtheorem{definition}[subsection]{Definition}
\newtheorem{remark}[subsection]{Remark}
\begin{document}

\title{Mixing for progressions in non-abelian groups}

\author{Terence Tao}
\address{UCLA Department of Mathematics, Los Angeles, CA 90095-1555.}
\email{tao@math.ucla.edu}

\subjclass{11B30, 20D60}

\begin{abstract}
We study the mixing properties of progressions $(x,xg,xg^2)$, $(x,xg,xg^2,xg^3)$ of length three and four in a model class of finite non-abelian groups, namely the special linear groups $\operatorname{SL}_d(F)$ over a finite field $F$, with $d$ bounded.  For length three progressions $(x,xg,xg^2)$, we establish a strong mixing property (with error term that decays polynomially in the order $|F|$ of $F$), which among other things counts the number of such progressions in any given dense subset $A$ of $\operatorname{SL}_d(F)$, answering a question of Gowers for this class of groups.  For length four progressions $(x,xg,xg^2,xg^3)$, we establish a partial result in the $d=2$ case if the shift $g$ is restricted to be diagonalisable over $F$, although in this case we do not recover polynomial bounds in the error term. Our methods include the use of the Cauchy-Schwarz inequality, the abelian Fourier transform, the Lang-Weil bound for the number of points in an algebraic variety over a finite field, some algebraic geometry, and (in the case of length four progressions) the multidimensional Szemer\'edi theorem.
\end{abstract}

\maketitle

\section{Introduction}

Let $G = (G,\cdot)$ be a finite group, not necessarily abelian.  
Given a natural number $k \geq 1$ and $k$ functions $f_0, \dots, f_{k-1} \colon G \to \C$, we define the $k$-linear form
$$ \Lambda_{k,G}(f_0,\dots,f_{k-1}) := \E_{x,g \in G} \prod_{i=0}^{k-1} f_i(xg^{i-1}),$$
where $\E$ denotes the averaging notation
$$ \E_E f := \E_{x\in E} f(x) := \frac{1}{|E|} \sum_{x \in E} f(x)$$
for non-empty finite sets $E$ and complex-valued functions $f$ on $E$, with $|E|$ denoting the cardinality of the set $E$.  
Thus, for instance, if $A$ is a subset of $G$, with the associated indicator function $1_A: G \to \{0,1\}$, $\Lambda_{k,G}(1_A,\dots,1_A)$ denotes the number of (possibly degenerate) length $k$ geometric progressions $(x,xg,\dots,xg^{k-1})$ in $A$, divided by $|G|^k$.  

The form $\Lambda_{k,G}$ is easily computed for $k=1,2$:
\begin{align*}
\Lambda_{1,G}(f_0) &= \E_G f_0 \\
\Lambda_{2,G}(f_0,f_1) &= (\E_G f_0) (\E_G f_1).
\end{align*}

Now we turn to the $k=3$ case.  If $f_0,f_1,f_2$ are selected in a sufficiently ``random'' fashion, then probabilistic heuristics suggest that one has
\begin{equation}\label{heuristic}
\Lambda_{3,G}(f_0,f_1,f_2) \approx (\E_G f_0) (\E_G f_1) (\E_G f_2)
\end{equation}
and more generally
\begin{equation}\label{heuristic-k}
\Lambda_{k,G}(f_0,\dots,f_{k-1}) \approx \prod_{i=0}^{k-1} \E_G f_i.
\end{equation}
However, if $G$ has a non-trivial low-dimensional unitary representation $\rho: G \to U_d(\C)$ for some small $d$, then it becomes possible to violate the heuristic \eqref{heuristic}.  Indeed, if one lets $B$ be a small neighbourhood of the identity in $U_d(\C)$, and sets $B'$ to be the slightly larger neighbourhood
$$ B' := B \cdot B^{-1} \cdot B := \{ b_1 b_2^{-1} b_3: b_1,b_2,b_3 \in B\},$$
with the associated preimages $A := \rho^{-1}(B), A' := \rho^{-1}(B')$, then from the identity
$$ \rho(xg^2) = \rho(xg) \rho(x)^{-1} \rho(xg)$$
we see that $xg^2 \in B'$ whenever $x,xg \in B$.  In particular, we have
$$ \Lambda_{3,G}(1_A, 1_A, 1_{A'}) = \Lambda_{2,G}(1_A, 1_A) = (\E_G 1_A) (\E_G 1_A),$$
which violates \eqref{heuristic} if $B$ (and hence $B'$) is small enough; if the dimension $d$ is small, this can be done with a relatively large value for the density $\E_G 1_A$.  A similar argument applies to exhibit a deviation from \eqref{heuristic-k} for any $k \geq 3$.

The deviation from \eqref{heuristic} is most pronounced in the case when $G$ is abelian (so that all irreducible unitary representations of $G$ are in fact one-dimensional).  In this case we will switch to additive notation and write the group operation of $G$ as $+$, so that
\begin{equation}\label{xgo}
 \Lambda_{3,G}(f_0,f_1,f_2) := \E_{x,g \in G} f_0(x) f_1(x+g) f_2(x+2g).
\end{equation}
The analysis of this form usually begins by introducing the Fourier transform
$$ \hat f(\xi) := \E_{x \in G} f(x) e(-\xi \cdot x)$$
for all $\xi$ in the Pontryagin dual $\hat G$ of $G$, defined as the space of all homomorphisms $\xi: x \mapsto \xi\cdot x$ from $G$ to the (additive) unit circle $\R/\Z$, where $e(x) :=e^{2\pi ix}$; of course, $\hat G$ is encoding the irreducible one-dimensional unitary representations of $G$ mentioned previously.  Using the Fourier inversion formula
$$f(x) := \sum_{\xi \in \hat G} \hat f(\xi) e(\xi \cdot x)$$
one soon arrives at the useful identity
$$ \Lambda_{3,G}(f_0,f_1,f_2) = \sum_{\xi \in \hat G} \hat f_0(\xi) \hat f_1(-2\xi) \hat f_2(\xi)$$
relating the magnitude of $\Lambda_{3,G}(f_0,f_1,f_2)$ with the size of the Fourier coefficients of $f_0,f_1,f_2$.  Note that the heuristic \eqref{heuristic} corresponds to the $\xi=0$ term in this sum; the point is that the non-zero frequencies $\xi \neq 0$ can also give a significant contribution.

Using the above identity, one can eventually establish the Roth-type theorem
\begin{equation}\label{laga}
 \Lambda_{3,G}(1_A,1_A,1_A) \ge c_3(\delta)
\end{equation}
for any $0 < \delta \leq 1$, any finite abelian group $G$, and any subset $A \subset G$ with $|A| \geq \delta |G|$, where $c_3(\delta)>0$ depends only on $\delta$; see e.g. \cite[Theorem 10.9]{tao-vu}.   In a similar vein, we have the deep theorem of Szemer\'edi \cite{szemeredi}, which implies\footnote{Strictly speaking, the original theorem of Szemer\'edi only treats the case when $G$ is a cyclic group, but subsequent proofs of Szemer\'edi's theorem (such as the hypergraph-based proofs in \cite{gowers-hyper}, \cite{rs}, \cite{rodl}, \cite{tao-hyper}) allow for one to handle arbitrary abelian groups $G$.} the more general lower bound,
\begin{equation}\label{laga-k}
 \Lambda_{k,G}(1_A,\dots,1_A) \ge c_k(\delta)
\end{equation}
for all $k \geq 1$ and $0 < \delta \leq 1$, any finite abelian groups $G$, and any $A \subset G$ with $|A| \geq \delta |G|$, where $c_k(\delta)>0$ depends only on $k$ and $\delta$.

\begin{remark}  More explicit bounds for $c_3(\delta)$ are known.  For general abelian groups $G$, an argument of Bourgain \cite{bourgain-triples} gives $c_3(\delta) \geq c \delta^{C/\delta^2}$ for some absolute constants $c,C>0$; see e.g. \cite[Theorem 10.30]{tao-vu}.  In the case when $G$ is a cyclic group, the strongest bound to date is due to Sanders \cite{sanders}, who (in our notation) established that $c_3(\delta) \geq c \delta^{C \log^4(1/\delta)/\delta}$; on the other hand, in this case one also has the upper bound $c_3(\delta) \leq C \delta^{c \log(1/\delta)}$ due to Behrend \cite{behrend}.  When $G$ is a vector space over a fixed finite field $F$ of odd order (such as $F_3$), the best bound is due to Bateman and Katz \cite{bate}, who established $c_3(\delta) \geq \exp(-C \delta^{c-1})$ for some constants $C,c>0$ depending only on $F$.  For $k > 3$ and for cyclic groups, the explicit bounds known are weaker: for $k=4$, the results in \cite{gt} give $c_4(\delta) \geq c \exp(- C \delta^{-C \log(1/\delta)})$, while for higher $k$, the results in \cite{gow} give $c_k(\delta) \geq c_k \exp(\exp(-C_k \delta^{-C_k}))$ for some constants $c_k, C_k>0$ depending on $k$; in the other direction, a modification of the Behrend construction \cite{rankin} gives $c_k(\delta) \leq C_k \delta^{c_k \log^{c_k}(1/\delta)}$.  For general groups, explicit lower bounds on $c_k(\delta)$ are known thanks to the recent quantitative work on the density Hales-Jewett theorem \cite{polymath} or the hypergraph removal lemma \cite{gowers}, \cite{rs}, \cite{rodl}, \cite{tao-hyper}, but the bounds are rather poor.
\end{remark}

Now we turn to the case when $G$ is not necessarily abelian, and in particular in the \emph{quasirandom} case in which $G$ has no low-dimensional representations.  More precisely, following Gowers \cite{gowers}, call a finite group $G$ \emph{$D$-quasirandom} if the only irreducible unitary representations $\rho: G \to U_d(\C)$ have dimension $d$ greater than or equal to $D$.  A model example of quasirandom groups are provided by the special linear groups over a finite field:

\begin{proposition}[Quasirandomness of special linear group]\label{quasi-special}  Let $d \geq 2$ be an integer, and let $F$ be a finite field.  Then the group $\operatorname{SL}_d(F)$ of $d \times d$ matrices with coefficients in $F$ of determinant one is $c_d |F|^{d-1}$-quasirandom, for some $c_d>0$ depending only on $d$.
\end{proposition}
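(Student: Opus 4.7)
My plan is to bound the dimension of a non-trivial irreducible unitary representation $\rho$ of $G = \SL_d(F)$ from below by restricting it to a suitable abelian subgroup. Write $q = |F|$ and let $V$ be the root subgroup
$$V := \left\{ \begin{pmatrix} 1 & v^T \\ 0 & I_{d-1} \end{pmatrix} : v \in F^{d-1} \right\},$$
which is abelian and isomorphic to $F^{d-1}$ as an additive group, so $|V| = q^{d-1}$. The Levi-type subgroup $L := \{ \diag(t,A) : t \in F^*,\, A \in \operatorname{GL}_{d-1}(F),\, t \det A = 1\}$ normalizes $V$, and a direct computation shows that $\diag(t,A)$ acts on $v \in F^{d-1}$ by conjugation as $v \mapsto t A^{-T} v$. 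Eliminating $t = (\det A)^{-1}$, this becomes the twisted action $v \mapsto (\det A)^{-1} A^{-T} v$ of $\operatorname{GL}_{d-1}(F)$ on $F^{d-1}$, which is transitive on $F^{d-1} \setminus \{0\}$ as soon as $d \geq 3$ (since $\SL_{d-1}(F)$ is already transitive on non-zero vectors in that range), and has $L$-orbits of size at least $(q-1)/2$ when $d=2$.

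Next, decompose $\rho|_V$ into characters of the abelian group $V$, writing $m(\chi)$ for the multiplicity of $\chi \in \widehat V$. A standard substitution in the character inner product (using that the character of $\rho$ is a class function on $G$ and that $L$ normalizes $V$) shows that $m(\chi) = m(\chi^g)$ for every $g \in L$, where $\chi^g(v) := \chi(g^{-1}vg)$. Hence multiplicities are constant on $L$-orbits of $\widehat V$, so if any non-trivial character $\chi$ appears in $\rho|_V$, then every character in its $L$-orbit appears with multiplicity at least one. Combined with the transitivity above, this yields $\dim \rho \geq q^{d-1} - 1$ for $d \geq 3$ and $\dim \rho \geq (q-1)/2$ for $d=2$, both of which are at least $c_d q^{d-1}$ for a suitable $c_d > 0$.

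It remains to rule out the degenerate possibility that $\rho|_V$ consists only of the trivial character, i.e.\ that $V \subseteq \ker\rho$. Conjugating $V$ by the permutation matrices in the Weyl group of $G$ puts every other elementary root subgroup into $\ker\rho$ as well, and these together generate the full group of transvections and hence all of $\SL_d(F)$, contradicting the non-triviality of $\rho$. The main obstacle in making this rigorous is the normal-closure / generation step --- essentially the (quasi-)simplicity of $\SL_d(F)$ modulo its centre; for the finitely many small $(d,|F|)$ where this is delicate, any loss can be absorbed into the constant $c_d$.
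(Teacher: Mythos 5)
The paper does not actually give a proof of this proposition; it simply cites the Landazuri--Seitz bounds on minimal degrees of projective representations of finite Chevalley groups \cite{land}, noting that the $d=2$ prime-order case goes back to Frobenius. Your argument, by contrast, is a genuine self-contained proof along exactly the classical Frobenius-style lines, extended to higher $d$. You restrict to the unipotent radical $V \cong F^{d-1}$ of the maximal parabolic, decompose into additive characters, observe via Clifford theory (the substitution $w = gvg^{-1}$ in the inner product, valid since $L$ normalizes $V$ and $\tr\rho$ is a class function on $G$) that multiplicities are constant on $L$-orbits, and then use the transitivity of the Levi on $F^{d-1}\setminus\{0\}$ (via the embedded $\SL_{d-1}$ for $d\ge 3$, or the squares in $F^\times$ for $d=2$) to force $\dim\rho \gg_d q^{d-1}$. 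This is correct, and it is a nice trade: Landazuri--Seitz gives sharper constants and covers all types uniformly via a detailed case analysis, whereas your argument is elementary and transparent, at the cost of a non-optimal $c_d$ (which is all the proposition requires).

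A few small points worth tightening. First, the orbit-size count you apply is on characters of $V$, not on $V$ itself, so you should observe that the contragredient of a linear action that is transitive on $F^{d-1}\setminus\{0\}$ is again transitive (immediate, since $\SL_{d-1}$ already suffices on the dual side); for $d=2$ the dual orbits again have size $\ge (q-1)/2$. Second, in ruling out $V\subseteq\ker\rho$, the permutation matrices realizing the Weyl group need a sign adjustment to lie in $\SL_d(F)$, but since the root parameter $c$ ranges over all of $F$ this is harmless. Third, you are more cautious than necessary at the end: the fact that the normal closure of $V$ is all of $\SL_d(F)$ follows from generation by transvections and holds for every $d\ge 2$ and every finite $F$ without exception, so no appeal to quasi-simplicity and no excluded small cases are needed. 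Indeed, even for $\SL_2(F_2)\cong S_3$ and $\SL_2(F_3)$, where one-dimensional nontrivial representations exist, your bound $(q-1)/2$ is compatible with them, and the constant $c_2$ absorbs the slack.
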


\begin{proof} This follows from the results in \cite{land}.  The case when $d=2$ and $|F|$ has prime order is classical, dating back to the work of Frobenius.  Similar results hold for other finite (almost) simple groups of Lie type and bounded rank; see \cite{land}.
\end{proof}

When $D$ is large, one expects better mixing properties in the forms $\Lambda_{k,G}$.  To illustrate this, we introduce the variant expressions
$$ \Lambda_{k,G}^*(f_0,\dots,f_{k-1}) := \E_{g \in G} \left|\E_{x \in G} \prod_{i=0}^{k-1} f_i(xg^{i-1}) - \prod_{i=0}^{k-1} \E_G f_i\right|,$$
which controls the number of length $k$ progressions for a \emph{single} (generic) shift $g$, as opposed to the average number over all such $g$.
This expression vanishes for $k=1$, but can be non-trivial for $k>1$.
From the triangle inequality we have
\begin{equation}\label{tri-eq}
 |\Lambda_{k,G}(f_0,\dots,f_{k-1}) - \prod_{i=0}^{k-1} \E_G f_i| \leq \Lambda_{k,G}^*(f_0,\dots,f_{k-1})
 \end{equation}
and so the heuristic \eqref{heuristic-k} holds whenever $\Lambda_{k,G}^*(f_0,\dots,f_{k-1})$ is small.  However, when one has a low-dimensional representation $\rho: G \to U_d(\C)$, it is possible for $\Lambda_{k,G}^*(f_0,\dots,f_{k-1})$ to be large even when \eqref{heuristic-k} holds.  Consider for instance the $k=2$ case, in which \eqref{heuristic-k} holds exactly.  If we let $B$ be a small neighbourhood of the identity in $U_d(\C)$ with preimage $A := \rho^{-1}(B)$ as before, and sets $A' := \rho^{-1}(B^{-1} \cdot B)$, we see that $1_A(x) 1_A(xg)$ vanishes whenever $g \not \in A'$, and thus
$$ \Lambda_{2,G}^*(1_A,1_A) = \E_{g \in G} |\E_{x \in G} 1_A(x) 1_A(xg) - (\E_G 1_A)^2|$$
can be lower bounded by $(\E_G 1_A)^2 (1 - \E_G 1_{A'})$, which can be somewhat large if $B$ is chosen small enough, and $d$ is small.

As observed first by Gowers \cite{gowers}, though, $\Lambda_{2,G}^*$ becomes much smaller in the quasirandom case.  This is elegantly captured by the inequality
\begin{equation}\label{babai}
\| f_1 * f_2 \|_{L^2(G)} \leq D^{-1/2} |G| \|f_1\|_{L^2(G)} \|f_2\|_{L^2(G)}
\end{equation}
of Babai, Nikolov, and Pyber \cite{babai-nikolov-pyber}, for any $D$-quasirandom group $G$ and any functions $f_1,f_2: G \to \C$ with at least one of $f_1,f_2$ having mean zero, where
$$ \|f\|_{L^2(G)} := (\E_{x \in G} |f(x)|^2)^{1/2}$$
and $*$ denotes the discrete\footnote{Ordinarily, one would normalise this convolution by $\frac{1}{|G|}$ for compatibility with the averaging in the $L^2(G)$ norm, but it will be convenient to use the discrete normalisation because we will be passing from a group $G$ to various subgroups of $G$ in subsequent arguments.} convolution
$$ f_1*f_2(x) := \sum_{y \in G} f_1(y) f_2(y^{-1} x) = \sum_{y \in G} f_1(xy^{-1}) f_2(y);$$
see \cite{babai-nikolov-pyber} or \cite[Proposition 3]{berg}.  Note that \eqref{babai} improves by a factor of $D^{-1/2}$ over the trivial bound of $|G| \|f_1\|_{L^2(G)} \|f_2\|_{L^2(G)}$ arising from the Young and Cauchy-Schwarz inequalities.

The estimate \eqref{babai} has the following useful corollary:

\begin{lemma}[$k=2$ mixing for quasirandom groups]\label{quasmix}  If $G$ is a $D$-quasirandom group, then
$$ \Lambda_{2,G}^*(f_1,f_2) \leq D^{-1/2} \|f_1\|_{L^2(G)} \|f_2\|_{L^2(G)}.$$
\end{lemma}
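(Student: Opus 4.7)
The plan is to recast the inner average as a convolution in $g$ and then invoke the Babai--Nikolov--Pyber bound \eqref{babai}. Introducing the reflected function $\tilde f(y) := f(y^{-1})$ and substituting $y = x^{-1}$, one obtains the identity
$$ \E_{x \in G} f_1(x) f_2(xg) = \frac{1}{|G|} (\tilde f_1 * f_2)(g),$$
which expresses the relevant average as a discrete convolution evaluated at $g$.

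To access the mean-zero hypothesis required by \eqref{babai}, I would subtract off the means. Setting $F_i := f_i - \E_G f_i$, the two cross terms $(\E_G f_2) \E_x F_1(x)$ and $(\E_G f_1) \E_x F_2(xg)$ both vanish (in the latter, $xg$ ranges over $G$ as $x$ does), leaving
$$ \E_x f_1(x) f_2(xg) - (\E_G f_1)(\E_G f_2) = \frac{1}{|G|}(\tilde F_1 * F_2)(g). $$
Taking absolute values, averaging over $g$, and applying Cauchy--Schwarz in the form $\|h\|_{L^1(G)} \leq \|h\|_{L^2(G)}$ (valid with the averaging convention), one bounds $\Lambda_{2,G}^*(f_1,f_2)$ by $\tfrac{1}{|G|}\|\tilde F_1 * F_2\|_{L^2(G)}$.

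Now \eqref{babai} applies directly, since $\tilde F_1$ inherits mean zero from $F_1$, yielding $\Lambda_{2,G}^*(f_1,f_2) \leq D^{-1/2} \|\tilde F_1\|_{L^2(G)} \|F_2\|_{L^2(G)}$. A final cleanup uses $\|\tilde F\|_{L^2(G)} = \|F\|_{L^2(G)}$ and the Pythagorean inequality $\|F_i\|_{L^2(G)}^2 = \|f_i\|_{L^2(G)}^2 - |\E_G f_i|^2 \leq \|f_i\|_{L^2(G)}^2$ to replace $F_i$ by $f_i$ and conclude. I do not expect any substantive obstacle: the estimate \eqref{babai} does the essential work, and the only steps requiring care are the convolution rewriting and the bookkeeping around the mean-zero reduction.
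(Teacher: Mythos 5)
Your proof is correct and rests on the same ingredients as the paper's: identify a convolution, apply Cauchy--Schwarz, and invoke the Babai--Nikolov--Pyber bound \eqref{babai}. The only (cosmetic) difference is in the bookkeeping: you read the inner average $\E_{x}f_1(x)f_2(xg)$ directly as $\tfrac{1}{|G|}(\tilde F_1 * F_2)(g)$ and use $\|\cdot\|_{L^1(G)} \leq \|\cdot\|_{L^2(G)}$, whereas the paper first normalizes $f_1, f_2$ to mean zero, linearizes the absolute value by introducing a unimodular dual weight $f_0(g)$, changes variables to produce a convolution involving $f_0$, and then applies Cauchy--Schwarz in the new variable. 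Both land on \eqref{babai} with the mean-zero hypothesis supplied by $F_1$; your version avoids the auxiliary $f_0$ at the small cost of explicitly carrying the subtracted means through one identity.
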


\begin{proof}  Observe that the expression $\Lambda_{2,G}^*(f_1,f_2)$ does not change if $f_1$ or $f_2$ is modified by an additive constant.  Thus we may normalise $f_1$ and $f_2$ to both have mean zero.  We can then write
$$ \Lambda_{2,G}^*(f_1,f_2) = \E_{g \in G} f_0(g) \E_{x \in G} f_1(x) f_2(xg)$$
for some function $f_0: G \to \C$ of magnitude $1$.  The right-hand side can be rewritten, after a change of variables, as
$$ \frac{1}{|G|} \E_{y \in G} (f_0*f_1)(y) f_2(y).$$
The claim then follows from \eqref{babai} and the Cauchy-Schwarz inequality.
\end{proof}

In \cite{gowers}, Gowers posed the question of whether results such as Lemma \ref{quasmix} could be extended to higher values of $k$, so that the heuristic \eqref{heuristic} or \eqref{heuristic-k} could hold for sufficiently quasirandom groups.  We were not able to settle this question in general, but in the $k=3$ case we can affirmatively answer the question for a model class of quasirandom groups, namely the special linear groups $\operatorname{SL}_d(F)$ over a finite field $F$:

\begin{theorem}\label{first}  Let $F$ be a finite field, and set $G := \operatorname{SL}_d(F)$ for some $d \geq 2$.  Then we have
$$
|\Lambda_{3,G}^*(f_0,f_1,f_2)| \ll_d |F|^{-\min(d-1,2)/8} \prod_{i=0}^2 \|f_i\|_{L^\infty(G)} 
$$
for all functions $f_0,f_1,f_2: G \to \C$, where $\|f\|_{L^\infty(G)} := \sup_{x \in G} |f(x)|$.  
Here and in the sequel we use $Y \ll_d X$, $X \gg_d Y$, or $Y = O_d(X)$ to denote the estimate $|Y| \leq C_d X$ for some $C_d$ depending only on $d$, and similarly with $d$ replaced by other sets of parameters.  In particular, from \eqref{tri-eq} one has 
$$
 \Lambda_{3,G}(f_0,f_1,f_2) = (\E_G f_0) (\E_G f_1) (\E_G f_2) + O_d( |F|^{-\min(d-1,2)/8} \prod_{i=0}^2 \|f_i\|_{L^\infty(G)}  )
$$
\end{theorem}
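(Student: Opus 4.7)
Normalize $\|f_i\|_{L^\infty(G)} \leq 1$ and split each $f_i = c_i + F_i$ with $c_i := \E_G f_i$ and $F_i$ mean-zero. Writing $\phi(g) := \E_x f_0(xg^{-1}) f_1(x) f_2(xg)$, multilinear expansion gives
$$\phi(g) - c_0 c_1 c_2 = \sum_{\emptyset \neq S \subseteq \{0,1,2\}} \Big(\prod_{j \notin S} c_j\Big) \phi_S(g), \qquad \phi_S(g) := \E_x \prod_{i \in S} F_i(xg^{i-1}),$$
reducing matters to estimating $\E_g|\phi_S(g)|$ for each nonempty $S$. Singletons vanish by the mean-zero property. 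The consecutive pair cases $S=\{0,1\}, \{1,2\}$ reduce directly to $\Lambda_{2,G}^*(F_i, F_j)$ and give $\ll |F|^{-(d-1)/2}$ via Lemma \ref{quasmix} combined with Proposition \ref{quasi-special}. For $S=\{0,2\}$ the shift becomes $g^2$; I would handle this by transferring to an average over $h = g^2$ using Lang--Weil applied to the squaring variety $\{(g,h) \in G \times G : g^2 = h\}$, whose fibers are generically of bounded size and whose exceptional locus has positive codimension, so the same bound holds up to acceptable error.

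The heart of the argument is the $S=\{0,1,2\}$ case, where all three $F_i$ have mean zero. First apply Cauchy--Schwarz in $g$ to obtain $(\E_g |\phi_{012}(g)|)^2 \leq \E_g |\phi_{012}(g)|^2$, and expand the square. Substituting $y = hx$ to uniformize the duplicated variable yields the key identity
$$\E_g |\phi_{012}(g)|^2 = \E_h \Lambda_{3,G}(\Delta_h F_0, \Delta_h F_1, \Delta_h F_2),$$
where $\Delta_h f(x) := f(x)\overline{f(hx)}$ is the multiplicative derivative. This threatens circularity---a 3-AP estimate reduced to a weighted 3-AP estimate---but is broken by the observation that the mean of $\Delta_h F_i$ is the self-correlation $g_i(h) := \E_x F_i(x)\overline{F_i(hx)}$, which factors as a normalized convolution $\bar F_i * \widetilde{F_i}$ of mean-zero functions and therefore satisfies $\|g_i\|_{L^2(G)} \ll |F|^{-(d-1)/2}$ by \eqref{babai}.

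To exploit this gain I would isolate one factor, writing $\E_h \Lambda_{3,G}(\Delta_h F_0, \Delta_h F_1, \Delta_h F_2) = \E_{h,u} \Delta_h F_0(u)\, \Psi(h,u)$ with kernel $\Psi(h,u) := \E_v \Delta_h F_1(v) \Delta_h F_2(vu^{-1}v)$ obtained by using the parametrization $\Lambda_{3,G}(g_0,g_1,g_2) = \E_{u,v} g_0(u) g_1(v) g_2(vu^{-1}v)$. A second Cauchy--Schwarz in $(h,u)$ then bounds the square of this quantity by $\|F_0\|_4^4 \cdot \E_{h,u}|\Psi(h,u)|^2$. Expanding $|\Psi|^2$ and changing variables produces a sum over an algebraic subvariety of $G^4$ cut out by the non-commutative 3-AP constraints $w_1 = v_1 u^{-1} v_1$, $w_2 = v_2 u^{-1} v_2$; averaging the $h$ variable first reveals correlation factors of the form $g_i(\cdot)$ again. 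Lang--Weil controls the volume and the singular locus of this variety, while the Babai--Nikolov--Pyber inequality applied to the resulting bilinear sum (which inherits mean-zero factors from the $F_i$) supplies a gain of $|F|^{-\min(d-1,2)/2}$: the cap at $2$ arises because the codimension of the relevant exceptional locus is bounded by $2$ regardless of $d$. Taking the two successive square roots from the Cauchy--Schwarz steps then yields the claimed $|F|^{-\min(d-1,2)/8}$.

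\textbf{Main obstacle.} The delicate step is the second Cauchy--Schwarz: one must choose the variable split so that (i) the residual expression is genuinely bilinear (rather than reappearing as trilinear), (ii) at least one mean-zero factor survives so that \eqref{babai} actually bites, and (iii) the quadratic-in-$v$ constraint $w = vu^{-1}v$---which has no abelian analog---remains in a form to which Lang--Weil applies cleanly. Unlike the abelian setting, there is no Fourier dualization that simultaneously disentangles the three indices, so the algebraic geometry of $\operatorname{SL}_d(F)$ (bounds on the dimension and singular loci of the relevant varieties) must be invoked explicitly to compensate, and the bookkeeping that integrates Lang--Weil error terms with the quasirandomness gain is where the proof is most technical.
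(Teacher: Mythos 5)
Your overall strategy (split off constant parts, apply Cauchy--Schwarz to reach a $U^2$-type quantity, then bring in algebraic geometry and quasirandomness) is in the right spirit, and your van der Corput identity $\E_g|\phi_{012}(g)|^2 = \E_h\Lambda_{3,G}(\Delta_h F_0,\Delta_h F_1,\Delta_h F_2)$ is correct. But the argument diverges from the paper's at exactly the point you flag as the ``heart of the argument,'' and I believe there is a genuine gap there.

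The problem is your second Cauchy--Schwarz. You apply it in $(h,u)$ against the factor $\Delta_h F_0(u)$, which bounds the square of the trilinear average by $\E_{h,u}|\Psi(h,u)|^2$ at the cost of \emph{discarding} the mean-zero structure of $F_0$ entirely. The residual quantity is
$$\E_{h,u}|\Psi(h,u)|^2 = \E_{h,u,v,v'}\Delta_h F_1(v)\,\overline{\Delta_h F_1(v')}\,\Delta_h F_2(vu^{-1}v)\,\overline{\Delta_h F_2(v'u^{-1}v')},$$
and if one performs the $h$-average first (via $k = hv$) one obtains not correlation factors of the form $g_i(\cdot)$ but a genuine four-point correlation $\E_k\overline{F_1(k)}F_1(ka)\overline{F_2(kb)}F_2(kc)$ multiplied by bounded weights. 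Such correlations are not controlled by Babai--Nikolov--Pyber without further Cauchy--Schwarz applications (which would degrade the exponent), and the mean-zero property of the $F_i$ does not obviously ``bite'' here: $\Delta_h F_i$ has small mean $g_i(h)$ but is itself a bounded function, and splitting $\Delta_h F_i = g_i(h) + (\Delta_h F_i - g_i(h))$ returns you to a three-term form with mean-zero inputs, i.e.\ the same circularity you set out to avoid. So the assertion that ``the resulting bilinear sum inherits mean-zero factors'' is not substantiated, and the explanation of the cap at $2$ in the exponent is likewise too vague to check.

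The paper escapes this trap with a different Cauchy--Schwarz order and, crucially, an idea absent from your proposal. Rather than squaring the $g$-average, the paper applies Cauchy--Schwarz first in $x$ to eliminate $f_1$, then in $(y,a)$ to eliminate $\Delta_a f_2$, arriving at a twisted $U^2$-type expression of the form
$$\E_{z,b,g,h}\Delta_b f_0(z)\,\Delta_{hbh^{-1}}f_0(zgh^{-1}g^{-1}h^{-1})\,\cdots.$$
The decisive step is then a \emph{self-averaging over the centraliser} $Z(b)$: one replaces $h$ by $hc$ for $c\in Z(b)$, which leaves $\Delta_{hbh^{-1}}f_0$ unchanged but replaces the shift by $gc^{-1}h^{-1}g^{-1}c^{-1}h^{-1}$. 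Averaging this shift over $g\in G$ and $c\in Z(b)$ produces a measure $\mu_{b,h}$, and the whole argument reduces to showing $\mu_{b,h}$ is approximately uniform on $G$ for generic $(b,h)$. This is where the algebraic geometry enters: the word map $(g,c)\mapsto gc^{-1}h^{-1}g^{-1}c^{-1}h^{-1}$ from $\operatorname{SL}_d\times Z(b)$ is shown to be dominant for generic $h$ (Proposition \ref{quali}, established by a Lie algebra computation), and Lang--Weil/Schwarz--Zippel then yield the $|F|^{-1}$ gain. Without the centraliser averaging, one does not get the extra averaging variable $c$ that makes the dominance statement achievable and hence the equidistribution provable. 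This is the missing idea in your proposal, and I do not see how to complete the argument along your lines without it (or some replacement for it).

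A smaller point: your treatment of the $S=\{0,2\}$ case (using Lang--Weil for the squaring map $g\mapsto g^2$) is plausible but requires real care, since the squaring map is not surjective on $\operatorname{SL}_d(F)$ and its pushforward measure is nonuniform. The paper sidesteps this entirely by not performing the full multilinear decomposition: its Cauchy--Schwarz steps work directly with the $\Lambda_{3,G}^*$ quantity after only reducing to $f_0$ of mean zero.
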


Theorem \ref{first} is proven primarily through application of the Cauchy-Schwarz inequality and Lemma \ref{quasmix}; we give this proof in Sections \ref{3g-sec}-\ref{sld-sec}.  The key point is that the non-abelian nature of $G$ means that the application of Cauchy-Schwarz creates more averaging than is seen in the abelian case.  The exponent $\min(d-1,2)/8$ is unlikely to be optimal.  By taking $f_0,f_1,f_2$ to be constant on left cosets $gH$ of a proper subgroup of $H$ and of mean zero, we see that one cannot replace the quantity $|F|^{-\min(d-1,2)/8}$ by anything much smaller than $|H|/|G|$; in particular, if we take $H$ to be the Borel subgroup of upper-triangular matrices in $G$, we see that one cannot replace $\min(d-1,2)/8$ by any exponent greater than $\frac{d(d-1)}{2}$.  It is likely that one can extend Theorem \ref{first} to other finite simple groups\footnote{To be pedantic, $\operatorname{SL}_d(F)$ is usually not a simple group, due to its nontrivial centre; but it is a bounded cover of a finite simple group, namely $P\operatorname{SL}_d(F)$.  Note that the results for $\operatorname{SL}_d(F)$ in this paper automatically descend to the quotient group $P\operatorname{SL}_d(F)$ without difficulty.} of Lie type with bounded rank, but we will not do so here.

Applying Theorem \ref{first} to indicator functions $f_0=1_A, f_1 = 1_B, f_2 = 1_C$ and using Markov's inequality, we conclude in particular the ``weak mixing''  bound
$$ \mu( A \cap Bg \cap Cg^2 ) = \mu(A) \mu(B) \mu(C) + O_d(|F|^{-\min(d-1,2)/16})$$
for a proportion $1-O_d(|F|^{-\min(d-1,2)/16})$ of $g \in G$, where $\mu(A) := \E_G 1_A = |A|/|G|$ denotes the density of $A$ in $G$.

We conjecture that Theorem \ref{first} can be extended to higher values of $k$ than $k=3$ (possibly with a smaller exponent than $\min(d-1,2)/8$).  Unfortunately, the Cauchy-Schwarz argument does not seem to extend beyond $k=3$; in contrast to the abelian case, in the non-abelian setting it appears that when $k>3$, each application of Cauchy-Schwarz \emph{increases} the complexity of the resulting form, rather than decreasing it as in the abelian case.  However, we are able to establish the following weak partial result in the $k=4$, $d=2$ case, in which the shift $g$ is restricted to be diagonalisable:

\begin{theorem}\label{second} Let $F$ be a finite field, and set $G := \operatorname{SL}_2(F)$.  Let $S$ denote all the elements of $G$ which are diagonalisable over $F$.  Then for all functions $f_0,f_1,f_2,f_3: G \to \C$, one has
$$
\E_{g \in S} \left|\E_{x \in G} \prod_{i=0}^{3} f_i(xg^{i-1}) - \prod_{i=0}^{k-1} \E_G f_i\right| = o_{|F| \to \infty}( \prod_{i=0}^{3} \|f_i\|_{L^\infty(G)} ),$$
where $o_{|F| \to \infty}(X)$ denotes a quantity bounded by $c(|F|) X$ for some quantity $c(|F|)$ that goes to zero as $|F|$ goes to infinity.
\end{theorem}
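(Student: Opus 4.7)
The plan is to exploit the fact that any $g \in S$ lies in a split maximal torus $T$, which is abelian, thereby reducing the length-four mixing problem to a length-four progression average on a cyclic group; the residual abelian problem is then handled using a multidimensional Szemer\'edi theorem via a Furstenberg-type correspondence.

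Step 1 (Torus reduction). Every $g \in S$ lies in a unique split maximal torus $T = T(g)$, conjugate to the standard diagonal torus $T_0 \cong F^*$ via some $u \in G$. Since $g \in T$, right multiplication by any power of $g$ preserves left cosets of $T$; writing $x = y t_0$ with $y \in G/T$ and $t_0 \in T$ gives
\[
\E_{x \in G} \prod_{i=0}^{3} f_i(xg^{i-1}) = \E_{y \in G/T}\, \E_{t_0 \in T} \prod_{i=0}^{3} h_i^y(t_0 g^{i-1}),
\]
where $h_i^y(s) := f_i(ys)$ is a bounded function on the cyclic group $T$. For each fixed coset $y$, the inner average is a genuine length-four progression count in the abelian group $T$ with common difference $g$.

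Step 2 (Parametrising the shift). Parametrising $g = u t u^{-1}$ with $u \in G$ and $t \in T_0$, and changing variable $z = yu$ with $s_0 := u^{-1} t_0 u \in T_0$, each factor becomes $f_i(y t_0 g^{i-1}) = f_i(z s_0 t^{i-1} u^{-1})$. Accounting for the stabiliser of $t$ in the conjugation action, the quantity in Theorem \ref{second} is (up to error from non-regular elements) reduced to showing
\[
\E_{u \in G}\, \E_{t \in T_0} \Bigl| \E_{z \in G}\, \E_{s_0 \in T_0} \prod_{i=0}^{3} f_i\bigl(z s_0 t^{i-1} u^{-1}\bigr) - \prod_{i=0}^{3} \E_G f_i \Bigr| = o_{|F|\to\infty}(1).
\]

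Step 3 (Multidimensional Szemer\'edi input). The inner quantity is a length-four progression average on the cyclic group $T_0$ of order $|F|-1 \to \infty$, modulated by a flag-like variable $z$ and translated by $u^{-1}$. In a single cyclic group, length-four mixing genuinely fails: quadratic phases and Behrend-type $U^3$-obstructions prevent the count from factoring as a product of means. The needed cancellation must therefore come from the joint averaging over $u$ and $z$, each ranging over a three-dimensional variety. Via a Furstenberg correspondence principle, one passes to an ultrafilter limit as $|F| \to \infty$: a failure of the claim would yield a measure-preserving action with several commuting directions (one torus direction together with directions arising from the conjugation and flag parameters) supporting a multilinear correlation that fails to factor. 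Such a configuration is forbidden by the Furstenberg-Katznelson multidimensional Szemer\'edi theorem, in combination with algebraic facts about $\operatorname{SL}_2$ (Lang-Weil-type equidistribution) that prevent a non-trivial 2-step nil-obstruction on $T_0$ from being simultaneously consistent across all conjugate tori.

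The main obstacle is Step 3: one must extract $U^3$-type cancellation in a setting where no single cyclic group is mixing, by genuinely using the non-abelian averaging over the flag variety. The qualitative nature of the Furstenberg-Katznelson input (as opposed to quantitative $U^3$ inverse theorems) also explains why the error term in Theorem \ref{second} is only $o(1)$, rather than polynomial in $|F|^{-1}$ as in Theorem \ref{first}.
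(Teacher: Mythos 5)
Your Step 1 is a sound observation, but it sends you down a dead end that the paper deliberately avoids: reducing the shift $g$ to a maximal \emph{torus} $T\cong F^\times$ leaves you with a genuine length-four progression count in a cyclic group, which, as you yourself note, is governed by the $U^3$ norm and admits Behrend-type obstructions. The paper's key idea is instead to place $g$ in a \emph{Borel} subgroup $B = U\rtimes T_0$ (this is the reason the paper emphasises that $S$ is chosen so that the shift lies in a non-trivial \emph{metabelian} subgroup, not merely an abelian one). The semidirect-product structure of $B$ makes the length-four count in $B$ amenable: one writes $x=\psi(a)x_0$, $g=\psi(b)g_0$, observes that the $U$-parameter of $xg^{i-1}$ is a linear form in $(a,b)$ with coefficients that are explicit polynomials in the torus variable $\pi(g)$, and then runs a Gowers-style Cauchy--Schwarz and Fourier argument in the $U\cong F$ variable. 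That is the structure that actually produces the cancellation; the torus alone simply does not have it.

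Step 3 as written is a genuine gap, not a compressed proof. You want to argue that a counterexample would give a measure-preserving system forbidden by Furstenberg--Katznelson, but the multidimensional Szemer\'edi theorem asserts the \emph{existence} of configurations in a positive-density set, whereas what is needed here is that a length-four \emph{correlation} is asymptotically small -- these are not the same thing, and there is no known mechanism by which Furstenberg--Katznelson rules out $U^3$-obstructions. (Indeed, in a fixed cyclic group the obstruction can be entirely present; the cancellation must come from averaging over the non-abelian degrees of freedom, and that is exactly what has to be proved, not assumed.) In the paper the multidimensional Szemer\'edi theorem \emph{is} used, but in a much more limited and concrete way: after the Fourier expansion in the $U$-variable, the zero-frequency contribution reduces to a ``sum-product''-type combinatorial bound (Equation \eqref{del}), and Theorem \ref{multi} (applied to the multiplicative group $F^\times$, with many commuting shifts $r^i$) is used to produce many triples $(s,t,r)$ from which one derives an algebraic contradiction. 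The non-zero frequencies are treated separately by a further Cauchy--Schwarz plus B\'ezout, not by Szemer\'edi at all. So Szemer\'edi enters as a quantitative combinatorial lemma about $F^\times$, not as an ergodic correspondence principle applied to $\mathrm{SL}_2$.

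You also omit a third ingredient that has no counterpart in your sketch: to pass from the Borel back to all of $\mathrm{SL}_2(F)$, the paper needs a spectral-gap/expansion estimate for the averaged conjugates $\E_{g}\mu_{gUg^{-1}}$ (Proposition \ref{exp} and Corollary \ref{class}), established by a Bourgain--Gamburd-style escape-from-subvarieties argument. Your ``Lang--Weil-type equidistribution'' remark gestures toward this, but without it the reduction from Theorem \ref{third} to Theorem \ref{fourth} collapses. (There is also a preliminary Cauchy--Schwarz step reducing Theorem \ref{second} to Theorem \ref{third}, i.e. moving the absolute value outside the $g$-average, which you do not address but which is comparatively routine.) To salvage your approach you would essentially have to rediscover the Borel reduction and the expansion lemma; the torus-plus-abstract-ergodic route does not close.
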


It is easy to show that for large $|F|$, $S$ has density about $1/2$ in $G$; see Section \ref{Reduction-sec}.  The main reason why the shift $g$ is restricted to $S$ in our arguments is in order to ensure that $g$ is contained in a non-trivial metabelian subgroup of $G$; for instance, if $g$ is a diagonal matrix with entries in $F$, then it is contained in the Borel subgroup $B$ of upper triangular matrices in $G$.  The argument is rather \emph{ad hoc} in nature, combining Cauchy-Schwarz and the abelian Fourier transform with some explicit nonabelian effects coming from the algebraic structure of progressions in the Borel group.  It also relies on (a quantitative version of) the multidimensional Szemer\'edi theorem of Furstenberg and Katznelson \cite{fk0}, which is the reason for the poor decay in $|F|$.  Finally, to pass from the Borel subgroup back to the full group, an expansion result in $\operatorname{SL}_2(F)$, related to the Bourgain-Gamburd expansion theory in this group, is also required.

\begin{remark}  The results in this paper concern the mixing properties of the patterns $(x,xg,xg^2)$ and $(x,xg,xg^2,xg^3)$ for an explicit class of quasirandom groups, namely the special linear groups.  In a recent paper with Vitaly Bergelson \cite{berg}, we also establish some mixing properties for the patterns $(x,xg,gx)$ and $(g,x,xg,gx)$ in arbitrary quasirandom groups.  While the end results of both papers are superficially similar in nature, the proof techniques turn out to be completely different, with the results in \cite{berg} relying on nonstandard analysis, the triangle removal lemma from graph theory, and ergodic theorems involving idempotent ultrafilters.  In both cases, the methods are tailored to the specific patterns being counted, and it appears we are still quite far from a general theory that can cover all nonabelian patterns involving two or more variables such as $x,g$.  

We also remark that in \cite{tao-etale}, some mixing properties of patterns of the form $(x,y,P(x,y))$ were established when $P: G \times G \to G$ was a definable function over a finite field of large characteristic.  However, the arguments in that paper (which also involve Cauchy-Schwarz, but applied in a slightly different fashion) required $\{ (P(x,y), P(x,y'), P(x',y), P(x',y')): x,y \in G \}$ to be sufficiently Zariski dense in $G^4$.  This is not the case for the pattern $(x,xg,xg^2)$ (in which $P(x,y) := yx^{-1}y$), since $P(x,y)$ and $P(x,y')$ are necessarily conjugate to each other.
\end{remark}

\subsection{Acknowledgments}

The author was partially supported by a Simons Investigator award from the Simons Foundation and by NSF grant DMS-0649473.  He also thanks Vitaly Bergelson for many stimulating discussions regarding these topics.

\section{A general bound for $\Lambda_{3,G}$}\label{3g-sec}

Let us define the \emph{reduced spectral norm} $\|\mu\|_{S(G)}$ of a function $\mu: G \to \C$ to be the best constant such that
\begin{equation}\label{reduced}
\| f * \mu \|_{L^2(G)} \leq \|\mu\|_{S(G)} \|f\|_{L^2(G)}
\end{equation}
whenever $f: G \to \C$ has mean zero, thus
\begin{equation}\label{mox}
 |\E_{z \in G} f_1(z) (f_2*\mu)(z)| \leq \|\mu\|_{S(G)} \|f_1\|_{L^2(G)} \|f_2\|_{L^2(G)}
\end{equation}
for all $f_1,f_2: G \to \C$, as can be seen by splitting $f_1,f_2$ into constant and mean zero components, and noting that all cross terms vanish.

\begin{remark}\label{pwt}  From the Peter-Weyl theorem, one can also write $\|\mu\|_{S(G)}$ as
$$ \| \mu\|_{S(G)} = \sup_\rho \| \sum_{g \in G} \mu(g) \rho(g) \|_{\operatorname{op}}$$
where $\rho: G \to U(V)$ ranges over all non-trivial irreducible finite-dimensional unitary representations of $G$.  We will not make much use of this representation-theoretic interpretation of the reduced spectral norm here, although we remark that this interpretation can be used to derive the basic quasirandomness inequality \eqref{babai} (or \eqref{mus-2} below).
\end{remark}

The reduced spectral norm $\|\mu\|_{S(G)}$ is clearly a seminorm, and in particular obeys the triangle inequality.  From Minkowski's inequality, we have the crude bound
\begin{equation}\label{mus}
 \|\mu\|_{S(G)} \leq \|\mu\|_{\ell^1(G)}.
\end{equation}
From \eqref{babai} we also have the more refined estimate
\begin{equation}\label{mus-2}
 \|\mu\|_{S(G)} \leq D^{-1/2} |G|^{1/2} \|\mu\|_{\ell^2(G)}
 \end{equation}
when $G$ is $D$-quasirandom.  If we split $\mu$ into the region where $\mu(x) > C_0/|G|$, and the region where $\mu(x) \leq C_0/|G|$, for some threshold $C_0>0$, and apply \eqref{mus} to the latter and \eqref{mus-2} to the former, we conclude that
\begin{equation}\label{mus-3}
 \|\mu\|_{S(G)} \leq C_0 D^{-1/2} + \sum_{x \in G: \mu(x) > C_0/|G|} \mu(x).
\end{equation}

By combining these estimates with the Cauchy-Schwarz inequality, we can obtain the following general bound on the quantity $\Lambda_3(f_0,f_1,f_2)$.

\begin{proposition}\label{gen}  Let $G=(G,\cdot)$ be a $D$-quasirandom group for some $D \geq 1$. Let $C_0 \geq 1$ be a parameter.  Then we have
\begin{equation}\label{lag-0}
\Lambda^*_{3,G}(f_0,f_1,f_2) \ll \left(C_0 D^{-1/2} + \E_{b,h \in G} \sum_{y \in G: \mu_{b,h}(y) \geq C_0/|G|} \mu_{b,h}(y)\right)^{1/4} \prod_{i=0}^2 \|f_i\|_{L^\infty(G)} 
\end{equation}
for all functions $f_0,f_1,f_2: G \to \C$, where for each $b,h \in G$, $\mu_{b,h}: G \to \C$ is the function
\begin{equation}\label{mobuh}
 \mu_{b,h} := \E_{g \in G} \E_{c \in Z(b)} \delta_{g c^{-1} h^{-1} g^{-1} c^{-1} h^{-1}}
\end{equation}
where $Z(b) := \{ c \in G: cb=bc\}$ is the centraliser of $b$.  
\end{proposition}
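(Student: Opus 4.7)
The plan is to apply Cauchy--Schwarz twice in $\Lambda_{3,G}^*$ in order to reduce matters to an average over pairs $(b,h) \in G \times G$ of inner products of the form $\langle F^{(1)}_{b,h}, F^{(2)}_{b,h} * \mu_{b,h}\rangle_{L^2(G)}$, and then invoke \eqref{mox} and the pointwise bound \eqref{mus-3}.

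First I would introduce a magnitude-one phase $F\colon G \to \C$ and split $f_0 = \E_G f_0 + \tilde f_0$ with $\tilde f_0 := f_0 - \E_G f_0$. The constant-part contribution to $\Lambda_{3,G}^*(f_0,f_1,f_2)$ is at most $|\E_G f_0| \cdot \Lambda_{2,G}^*(f_1,f_2) \ll \|f_0\|_\infty\|f_1\|_\infty\|f_2\|_\infty D^{-1/2}$ by Lemma \ref{quasmix}, and since $C_0, D \geq 1$ this is absorbed into the $(C_0 D^{-1/2})^{1/4}$ term on the right-hand side of \eqref{lag-0}. It thus suffices to estimate $|T| := |\E_{y,g} F(g) \tilde f_0(y) f_1(yg) f_2(yg^2)|$. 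A first Cauchy--Schwarz in $y$ (peeling off $\tilde f_0$), followed by the expansion $g_2 = g_1 h$ in the resulting square and the substitution $z := y g_1$, yields
$$|T|^2 \ll \|f_0\|_\infty^2 \cdot \E_{z,g,h} F(g)\overline{F(gh)}\, f_1(z)\overline{f_1(zh)}\, f_2(zg)\overline{f_2(zhgh)}.$$
A second Cauchy--Schwarz (in $z$ and then in $h$) removes $f_1$ and gives
$$|T|^4 \ll \|f_0\|_\infty^4\|f_1\|_\infty^4 \cdot \E_{h,z}\bigl|\E_g F(g)\overline{F(gh)}\, f_2(zg)\overline{f_2(zhgh)}\bigr|^2.$$

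Expanding the inner square with $g_2' = g_1' c$ and substituting $v := z g_1'$, the crucial change of variable $b := (g_1')^{-1} h g_1'$ (so that $h = g_1' b (g_1')^{-1}$) produces four evaluations of $f_2$ at the points $v, vbh, vc, vbch$. The key algebraic observation is that $(vc)^{-1}(vbch) = (c^{-1}bc)h$, which equals $bh$ exactly when $c \in Z(b)$; in this case the quadruple $(v, vbh, vc, vbch)$ closes into a ``non-abelian parallelogram'' and the $v$-average becomes a clean inner product of ``non-abelian difference functions'' $\Delta_{bh} f_2(w) := f_2(w)\overline{f_2(w \cdot bh)}$. Reorganising the $(g_1', c, h)$-averaging to isolate the contribution from $c \in Z(b)$, and matching the resulting group element with the iterated commutator $g c^{-1} h^{-1} g^{-1} c^{-1} h^{-1}$ appearing in \eqref{mobuh}, I can rewrite the estimate as
$$|T|^4 \ll \prod_{i=0}^2 \|f_i\|_\infty^4 \cdot \E_{b,h \in G} \bigl|\E_{x \in G} F^{(1)}_{b,h}(x)\, (F^{(2)}_{b,h} * \mu_{b,h})(x)\bigr|$$
for suitable $F^{(1)}_{b,h}, F^{(2)}_{b,h} \colon G \to \C$ with $L^2$-norm $\ll \|f_2\|_\infty$ and with at least one of them of mean zero (inherited from $\tilde f_0$). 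Applying \eqref{mox} termwise in $(b,h)$ then produces $|T|^4 \ll \prod_i \|f_i\|_\infty^4 \cdot \E_{b,h} \|\mu_{b,h}\|_{S(G)}$, and substituting \eqref{mus-3} pointwise inside the expectation and taking a fourth root delivers \eqref{lag-0}.

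The hard part will be the bookkeeping required to extract the centraliser constraint $c \in Z(b)$ in $\mu_{b,h}$ from the raw Cauchy--Schwarz expansion, which initially averages unrestrictedly over $c \in G$. This extraction has to exploit both the conjugation structure $h = g_1' b (g_1')^{-1}$ introduced by the change of variables and the specific four-point geometry of $(v, vbh, vc, vbch)$, in order to recover the measure $\mu_{b,h}$ with exactly the form given in \eqref{mobuh}.
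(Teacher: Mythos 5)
The overall shape of your argument (two rounds of Cauchy--Schwarz, landing on $\langle F^{(1)},F^{(2)}*\mu_{b,h}\rangle$, then \eqref{mox} and \eqref{mus-3}) is the right skeleton, and your handling of the constant part of $f_0$ is fine. But there is a genuine gap where you expect the centraliser constraint $c\in Z(b)$ to emerge from the raw Cauchy--Schwarz expansion, and you flag this yourself at the end. The expansion does not restrict $c$ to $Z(b)$: after your second Cauchy--Schwarz you get the four evaluations $f_2(v),\,f_2(vbh),\,f_2(vc),\,f_2(vbch)$ with $c$ ranging over all of $G$, and there is no reason the $c\notin Z(b)$ part is negligible or can be ``reorganised away.'' Nothing in the algebra closes the quadruple into a box for generic $c$, so the expression you are left with is not of the form $\E_{b,h}\langle F^{(1)}_{b,h},F^{(2)}_{b,h}*\mu_{b,h}\rangle$.

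The paper's mechanism is qualitatively different, and it is worth being precise about it. First, the order of Cauchy--Schwarz is reversed from yours: $f_1$ is removed first, then $f_2$, so that the surviving function is $f_0$; this lands on the identity \eqref{zag}, which involves variables $z,b,g,h$ only — no $c$ at all. The centraliser variable is then introduced \emph{for free} by the self-averaging identity $\E_{h\in G}F(h)=\E_{h\in G}\E_{c\in Z(b)}F(hc)$ (valid for any nonempty $Z(b)$). The point that makes this useful is a specific invariance: the factor $\Delta_{hbh^{-1}}f_0$ appearing in \eqref{zag} does not change when $h$ is replaced by $hc$ with $c\in Z(b)$, because $(hc)b(hc)^{-1}=hbh^{-1}$. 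With that factor frozen, the only effect of the substitution $h\to hc$ is to change the evaluation point $zgh^{-1}g^{-1}h^{-1}$ into $zgc^{-1}h^{-1}g^{-1}c^{-1}h^{-1}$, which after averaging over $g$ and $c\in Z(b)$ is exactly the convolution against $\mu_{b,h}$. In your version (where $f_2$ survives) the analogous invariance is not present, which is why you cannot find the $Z(b)$ constraint. If you want to pursue your ordering, you would need to locate a comparable invariance to exploit; as written, the step you describe as ``isolating the contribution from $c\in Z(b)$'' has no justification, and the proof does not go through.
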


One can view $\mu_{b,h}$ as a probability measure on $G$, describing the distribution of the random variable $g c^{-1} h^{-1} g^{-1} c^{-1} h^{-1}$ when $g$ is a randomly chosen element of $G$, and $c$ is a random element commuting with $b$.  The estimate \eqref{lag-0} becomes useful when $\mu_{b,h}$ is approximately uniformly distributed over $G$ for typical $b,h$, so that $\sum_{y \in G: \mu_{b,h}(y) \geq C_0/|G|} \mu_{b,h}(y)$ is small.

\begin{proof}  When $f_0$ is equal to a constant $c$, we have
$$ \Lambda^*_{3,G}(f_0,f_1,f_2) = |c| \Lambda^*_{2,G}(f_1,f_2)$$
and the claim then follows from Lemma \ref{quasmix}.  As $\Lambda^*_{3,G}$ is sublinear in each of the three arguments, we may thus assume that $f_0$ has mean zero.  We then also assume that $f_0,f_1,f_2$ are real-valued, and normalise so that
$$\|f_0\|_{L^\infty(G)} = \|f_1\|_{L^\infty(G)} = \|f_2\|_{L^\infty(G)} = 1.$$
Our task is now to show that
$$
|\Lambda^*_{3,G}(f_0,f_1,f_2)|^4 \ll C_0 D^{-1/2} + \E_{b,h \in G} \sum_{y \in G: \mu_{b,h}(y) \geq C_0/|G|} \mu_{b,h}(y).$$
Ever since the work of Gowers \cite{gowers-4aps}, it has been is common to control expressions such as $\Lambda^*_{3,G}(f_0,f_1,f_2)$ via the Cauchy-Schwarz inequality.  In the literature, this was mostly performed in the abelian case, but one can obtain a useful estimate via Cauchy-Schwarz in the non-abelian case too.  First, we shift $x$ by $g^{-1}$ to obtain
$$ \Lambda^*_{3,G}(f_0,f_1,f_2) = \E_{g \in G} |\E_{x \in G} f_0(xg^{-1}) f_1(x) f_2(xg)|$$
which we expand as
$$ \Lambda^*_{3,G}(f_0,f_1,f_2) = \E_{x \in G} f_1(x)  (\E_{g \in G} f_0(xg^{-1}) f_2(xg) f_3(g))$$
for some\footnote{If one is only interested in bounding $\Lambda_{3,G}(f_0,f_1,f_2)$ rather than $\Lambda_{3,G}^*(f_0,f_1,f_2)$, one can take $f_3 \equiv 1$, and the reader may wish to do so initially in the argument that follows in order to simplify the exposition.} function $f_3: G \to \C$ bounded in magnitude by $1$.  Applying Cauchy-Schwarz in $x$ to eliminate $f_1$, we obtain
$$ \Lambda^*_{3,G}(f_0,f_1,f_2) \leq (\E_{x \in G} |\E_{g \in G} f_0(xg^{-1}) f_2(xg) f_3(g)|^2)^{1/2}.$$
We can expand the right-hand side as
$$ (\E_{x,g,g' \in G} f_0(xg^{-1}) f_0(x(g')^{-1}) f_2(xg) f_2(xg') f_3(g) f_3(g'))^{1/2}.$$
Making the change of variables $(y,g,a) := (xg, g, g^{-1} g')$, this becomes
$$ (\E_{y,g,a \in G} f_0(yg^{-2}) f_0(yg^{-1}a^{-1}g^{-1}) f_2(y) f_2(ya) f_3(g) f_3(ga))^{1/2}.$$
If we define $\Delta_a f(y) := f(y) f(ya)$, this becomes
$$ \left(\E_{y,a \in G} \Delta_a f_2(y) (\E_{g \in G} \Delta_{g a^{-1} g^{-1}} f_0(yg^{-2}) \Delta_a f_3(g))\right)^{1/2}.$$
Applying Cauchy-Schwarz in $y,a$ to eliminate $\Delta_a f_2$, we thus have
$$ \Lambda_{3,G}^*(f_0,f_1,f_2) \leq
(\E_{y,a \in G} |\E_{g \in G} \Delta_{g a^{-1} g^{-1}} f_0(yg^{-2}) \Delta_a f_3(g)|^2)^{1/4}.$$
The right-hand side can be expanded as
$$
(\E_{y,a,g,g' \in G} \Delta_{g a^{-1} g^{-1}} f_0(yg^{-2}) \Delta_{g' a^{-1} (g')^{-1}} f_0(y(g')^{-2}) \Delta_a f_3(g) \Delta_a f_3(g'))^{1/4}.$$
Making the change of variables $(z,b,g,h) := (yg^{-2}, ga^{-1}g^{-1}, g, g' g^{-1})$, we conclude the inequality
\begin{equation}\label{zag}
|\Lambda_{3,G}(f_0,f_1,f_2)| \leq (\E_{z, b, g, h \in G} \Delta_b f_0(z) \Delta_{hbh^{-1}} f_0( z g h^{-1} g^{-1} h^{-1} )
\Delta_{g^{-1} b^{-1} g} f_3(g) \Delta_{g^{-1} b^{-1} g} f_3(hg))^{1/4}.
\end{equation}
The right-hand side of \eqref{zag} can be viewed as a twisted, weighted variant\footnote{Indeed, in the model case when $f_3 \equiv 1$ and $G$ is abelian, the right hand side simplifies to $(\E_{z,b,h \in G} \Delta_b f_0(z) \Delta_b f_0(z h^{-2}))^{1/4}$, which (in the case that $G$ has odd order) is precisely the Gowers norm $\|f_0\|_{U^2(G)}$.} of the Gowers $U^2$ norm \cite{gowers-4aps}.   To control it, we observe the self-averaging identity
$$ \E_{h \in G} F(h) = \E_{h \in G} \E_{c \in C} F(hc)$$
for any non-empty set $C$ and any function $F: G \to \C$.  We apply this identity with $C$ equal to the centraliser $Z(b) := \{ c \in G: cb=bc\}$ of $b$ and $F$ equal to the expression being averaged on the right-hand side of \eqref{zag}; the point of this averaging is to exploit the trivial observation that the function $\Delta_{hbh^{-1}} f_0$ does not change if one replaces $h$ by $hc$ for an arbitrary $c \in Z(b)$.  We conclude that
\begin{align*}
|\Lambda_{3,G}(f_0,f_1,f_2)| &\leq (\E_{z, b, g, h \in G} \E_{c \in Z(b)} \Delta_b f_0(z) \Delta_{hbh^{-1}} f_0( z g c^{-1} h^{-1} g^{-1} c^{-1} h^{-1} ) \\
&\quad \Delta_{g^{-1} b^{-1} g} f_3(g) \Delta_{g^{-1} b^{-1} g} f_3(hcg))^{1/4}.
\end{align*}
We can rewrite the right-hand side as
\begin{equation}\label{noo}
 |\E_{b,h \in G} \E_{z \in G} \Delta_b f_0(z) (\Delta_{hbh^{-1}} f_0 * \tilde \mu_{b,h})(z)|^{1/4}
\end{equation}
where $\tilde \mu_{b,h}$ is a weighted version\footnote{Returning to the model case when $f_3 \equiv 1$ and $G$ is an abelian group of odd order, we have in this case that $\tilde \mu_{b,h} \equiv 1/|G|$, and \eqref{noo} is again just the Gowers norm $\|f_0\|_{U^2(G)}$.  The point is that for certain non-abelian groups $G$, one can still obtain some sort of equidistribution control on $\tilde \mu_{b,h}$ that makes it behave roughly like the uniform distribution $1/|G|$.} of $\mu_{b,h}$:
$$ \tilde \mu_{b,h} := \E_{g \in G} \E_{c \in Z(b)} \delta_{g c^{-1} h^{-1} g^{-1} c^{-1} h^{-1}} \Delta_{g^{-1} b^{-1} g} f_3(g) \Delta_{g^{-1} b^{-1} g} f_3(hcg).$$
Our task is now to show that
\begin{equation}\label{zog}
 |\E_{b,h \in G} \E_{z \in G} \Delta_b f_0(z) (\Delta_{hbh^{-1}} f_0 * \tilde \mu_{b,h})(z)| \ll
C_0 D^{-1/2} + \E_{b,h \in G} \sum_{y \in G: \mu_{b,h}(y) \geq C_0/|G|} \mu_{b,h}(y).
\end{equation}

From \eqref{mox} we see that
$$ |\E_{z \in G} \Delta_b f_0(z) (\Delta_{hbh^{-1}} f_0 * \tilde \mu_{b,h})(z)| \leq
\| \tilde \mu_{b,h} \|_{S(G)} + |\E_{z \in G} \Delta_b f_0(z)|$$
(by splitting $\Delta_b f_0$ into constant and mean zero components).  We may thus upper bound the left-hand side of \eqref{zog} by
$$
\E_{b,h \in G} \| \tilde \mu_{b,h} \|_{S(G)} +  \E_{b \in G} |\E_{z \in G} \Delta_b f_0(z)|.$$
The second term is equal to $\Lambda_{2,G}^*(f_0,f_0)$, which by Lemma \ref{quasmix} is bounded by $D^{-1/2}$.  As for the first term, we see from \eqref{mus-3} and the pointwise bound $|\tilde \mu_{b,h}(x)| \leq \mu_{b,h}(x)$ that
$$ \|\tilde \mu_{b,h}\|_{S(G)} \leq C_0 D^{-1/2} + \E_{b,h \in G} \sum_{y \in G: \mu_{b,h}(y) \geq C_0/|G|} \mu_{b,h}(y)$$
for each $b,h$. The claim follows.
\end{proof}

\section{The case of $\operatorname{SL}_2$}\label{sl2-sec}

We can now establish the $d=2$ case of Theorem \ref{first}, which serves as a simplified model for the general $d$ case.  From Proposition \ref{quasi-special} and Proposition \ref{gen}, it will suffice to show that
\begin{equation}\label{lo}
\E_{b,h \in G} \sum_{y \in G: \mu_{b,h}(y) \geq C_0/|G|} \mu_{b,h}(y) \ll |F|^{-1}
\end{equation}
for some absolute constant $C_0 \geq 1$, where $\mu_{b,h}$ was defined in \eqref{mobuh}.  

We now need to understand the distribution of $\mu_{b,h}$.  Call an element $b$ of $\operatorname{SL}_2(F)$ \emph{regular semisimple} if its two eigenvalues (in the algebraic closure $\overline{F}$) are distinct, or equivalently if $\tr b \neq \pm 2$.  It is easy to see that all but $O(|F|^2)$ elements of $G$ are regular semisimple.  Since $G$ has cardinality comparable to $|F|^3$, and each of the $\mu_{b,h}$ is normalised in $\ell^1$, we thus see that the contribution of the non-regular semisimple $b$ to \eqref{lo} is $O(|F|^{-1})$, which is acceptable. Thus we may restrict attention to the regular semisimple $b$.  

Now we study the quantity $\mu_{b,h}(y)$.  It is a classical fact that $|F| \ll |Z(b)| \ll |F|$ (this also follows from the Lang-Weil bound, Proposition \ref{lang-weil}).  As such, we have
$$ \mu_{b,h}(y) \ll |F|^{-4} |\{ (g,c) \in G \times Z(b): gc^{-1} h^{-1} g^{-1} c^{-1} h^{-1} = y\}|$$
which we rewrite as
$$ \mu_{b,h}(y) \ll |F|^{-4} |\{ (g,c) \in G \times Z(b): gc^{-1} h^{-1} g^{-1} = y h c\}|$$

If $c^{-1}h^{-1}$ is central (i.e. equal to $\pm 1$), then $y=1$, and the contribution to $\mu_{b,h}(1)$ of this case is $O(|F|^{-1})$.  Now we consider the contribution of those $c$ for which $c^{-1}h^{-1}$ is not central.  Then the centraliser of $c^{-1}h^{-1}$ has cardinality $\gg |F|$, and so every element $k$ of $\operatorname{SL}_2(F)$ of the same trace as $c^{-1}h^{-1}$ has $O(|F|)$ representations of the form $gc^{-1}h^{-1}g^{-1}$.  Of course, if $k$ does not have the same trace as $c^{-1}h^{-1}$, it has no such representations.  We conclude that
$$ \mu_{b,h}(y) \ll |F|^{-1} \delta_{y=1} + |F|^{-3} |\{ c \in Z(b): \tr(yhc) = \tr(c^{-1} h^{-1}) \}|.$$
For $a \in \operatorname{SL}_2(F)$, we see from direct computation (or the Cayley-Hamilton theorem) that $\tr(a^{-1}) = \tr(a)$. We thus have $\mu_{b,h}(y) \ll |F|^{-1}$ for $y=1$, and for $y \neq 1$
we have
$$ \mu_{b,h}(y) \ll |F|^{-3} |\{ c \in Z(b): \tr(yhc) = \tr(hc) \}|.$$
The centraliser $Z(b)$ are the $F$-points of the algebraic variety $\overline{Z(b)} := \{ c \in \operatorname{SL}_2(\overline{F}): cb=bc\}$, which is a curve of complexity\footnote{The complexity of an algebraic variety is defined in Definition \ref{vardef}.} $O(1)$.  From Bezout's theorem, we conclude that the quantity $|\{ c \in Z(b): \tr(yhc) = \tr(hc) \}|$ is bounded by $O(1)$ unless the equation $\tr(yhc) = \tr(hc)$ holds for all $c \in \overline{Z(b)}$, in which case this quantity is bounded instead by $|F|$.  For $C_0$ a sufficiently large absolute constant, we thus have
$$
\sum_{y \in G: \mu_{b,h}(y) \geq C_0/|G|} \mu_{b,h}(y) \ll |F|^{-1} + |F|^{-2} |Y_{b,h}|$$
where $Y_{b,h}$ is the set of all $y \in G$ such that $\tr(yhc) = \tr(hc)$ for all $c \in \overline{Z(b)}$.  It will thus suffice to show that
$$ |Y_{b,h}| \ll |F|$$
whenever $b$ is regular semisimple.

Fix such a $b$.  We may find a basis of $\overline{F}^2$ over $\overline{F}$ that makes $b$ diagonal.  As $b$ is also regular semisimple, we conclude that
$$ \overline{Z(b)} = \left\{ \begin{pmatrix} t & 0 \\ 0 & t^{-1} \end{pmatrix} : t \in \overline{F} \backslash 0 \right\}$$
in this basis, and so the constraint $\tr(yhc)=\tr(hc)$ for all $c \in \overline{Z(b)}$ is equivalent to the requirement that $yh-h$ vanishes on the diagonal.  This constrains $Y_{b,h}$ to a two-dimensional subspace of the four-dimensional vector space $\operatorname{Mat}_{2 \times 2}(\overline{F})$ of $2 \times 2$ matrices; as $y$ also needs to have determinant $1$, we conclude that $Y_{b,h}$ is constrained to a complexity $O(1)$ curve in this plane.  By the Schwarz-Zippel lemma (see Lemma \ref{schwarz}), we conclude that $|Y_{b,h}| \ll |F|$, as required.

\section{The case of $\operatorname{SL}_d$}\label{sld-sec}

Now we turn to the general case of Theorem \ref{first}.  This will basically be a reprise of the arguments in the preceding section, but with a heavier reliance on algebraic geometry in place of \emph{ad hoc} computations.

We allow all implied constants to depend on $d$.  As before, by Proposition \ref{quasi-special} and Proposition \ref{gen}, it suffices to establish the bound \eqref{lo}.  We may assume that $|F|$ is sufficiently large depending on $d$, as the claim is trivial otherwise.

Again, call $b \in \operatorname{SL}_d(F)$ \emph{regular semisimple} if it is diagonalisable in $\overline{F}$ with distinct eigenvalues.  A well-known computation gives
$$ |GL_d(F)| = \prod_{i=0}^{d-1} (|F|^d-|F|^i) = (1+O(|F|^{-1})) |F|^{d^2};$$
since $|G| = |GL_d(F)|/|F^\times|$, we conclude in particular that
\begin{equation}\label{lao}
|F|^{d^2-1} \ll |G| \ll |F|^{d^2-1}
\end{equation}
(this also follows from the Lang-Weil estimate, Proposition \ref{lang-weil}).  If $b$ is not regular semisimple, then its characteristic polynomial has a repeated root.  This constrains $b$ to an algebraic hypersurface of $\operatorname{SL}_d(F)$ of complexity $O(1)$.  This hypersurface has dimension $d^2-2$, so by the Schwarz-Zippel lemma (see Lemma \ref{schwarz}), we have that at most $O(|F|^{d^2-2})$ elements of $G$ are not regular semisimple.  This is only $O(|F|^{-1})$ of the elements of $G$, so to prove \eqref{lo} it suffices as before to consider the contribution of the regular semisimple $b$.

If $b$ is regular semisimple, then the centraliser $Z(b)$ of $b$ consists of the $F$-points of a $d-1$-dimensional torus $\overline{Z(b)}$ in $\operatorname{SL}_d(\overline{F})$, of complexity $O(1)$, defined over $F$.  By the Lang-Weil bound (Proposition \ref{lang-weil}), we have $|F|^{d-1} \ll |Z(b)| \ll |F|^{d-1}$.  Arguing as in the previous section, we thus have
\begin{equation}\label{monh}
 \mu_{b,h}(y) \ll |F|^{-d^2-d+2} |\{ (g,c) \in G \times Z(b): gc^{-1} h^{-1} g^{-1} c^{-1} h^{-1} = y\}|
 \end{equation}
Let $\phi_{b,h}: \operatorname{SL}_d(\overline{F}) \times \overline{Z(b)} \to \operatorname{SL}_d(\overline{F})$ be the map
\begin{equation}\label{phix}
 \phi_{b,h}( g, c) := gc^{-1} h^{-1} g^{-1} c^{-1} h^{-1}.
\end{equation}
This is a regular map of complexity $O(1)$ from the $d^2+d-2$-dimensional irreducible variety $\operatorname{SL}_d(\overline{F}) \times \overline{Z(b)}$ to the $d^2-1$-dimensional variety $\operatorname{SL}_d(\overline{F})$.  

Suppose that $(b,h)$ is such that the map $\phi_{b,h}$ is dominant.  Applying Proposition \ref{quant-dom}, we see that there exists a subset $\Sigma$ of
$\operatorname{SL}_d(\overline{F}) \times \overline{Z(b)}$ which can be covered by $O(1)$ varieties of complexity $O(1)$ and dimension at most $d^2+d-3$, such that for each $y \in \operatorname{SL}_d(\overline{F})$, the set
$$ |\{ (g,c) \in (\operatorname{SL}_d(\overline{F}) \times \overline{Z(b)}) \backslash \Sigma: \phi_{b,h}(g,c) = y \}$$
is covered by $O(1)$ varieties of complexity $O(1)$ and dimension at most $d-1$.  Applying the Schwarz-Zippel bound (Lemma \ref{schwarz}), we conclude that
$$ |\{ (g,c) \in (\operatorname{SL}_d(F) \times Z(b)) \backslash \Sigma: \phi_{b,h}(g,c) = y \}| \ll |F|^{d-1}$$
for all $y \in G$, and thus by \eqref{monh} one has
$$
 \mu_{b,h}(y) \ll |F|^{-d^2+1} + |F|^{-d^2-d+2} |\{ (g,c) \in (G \times Z(b)) \cap \Sigma: gc^{-1} h^{-1} g^{-1} c^{-1} h^{-1} = y\}|.$$
By \eqref{lao}, we conclude (for $C_0$ large enough) that
\begin{align*}
\sum_{y \in G: \mu_{b,h}(y) \geq C_0/|G|} \mu_{b,h}(y) &\ll
|F|^{-d^2-d+2} \sum_{y \in G} |\{ (g,c) \in (G \times Z(b)) \cap \Sigma: gc^{-1} h^{-1} g^{-1} c^{-1} h^{-1} = y\}| \\
&= |F|^{-d^2-d+2} |(G \times Z(b)) \cap \Sigma|
\end{align*}
and hence by another application of Schwarz-Zippel, we have
$$ 
\sum_{y \in G: \mu_{b,h}(y) \geq C_0/|G|} \mu_{b,h}(y)  \ll |F|^{-1}$$
when $\phi_{b,h}$ is dominant.  On the other hand, when $\phi_{b,h}$ is not dominant, we may crudely bound
$$ 
\sum_{y \in G: \mu_{b,h}(y) \geq C_0/|G|} \mu_{b,h}(y) \leq \sum_{y \in G} \mu_{b,h}(y) = 1.$$
To establish \eqref{lo}, it thus suffices to show that there are at most $O( |F|^{-1} |G|^2 )$ pairs $(b,h) \in G \times G$ with $b$ regular semisimple and $\phi_{b,h}$ not dominant.

Fix $b$ regular semisimple.  It suffices to show that $\phi_{b,h}$ is dominant for all but at most $O(|F|^{-1} |G|)$ values of $h \in G$; by the Schwarz-Zippel bound (Lemma \ref{schwarz}), it suffices to show that $\phi_{b,h}$ is dominant for all $h \in \operatorname{SL}_d(\overline{F})$ outside of $O(1)$ algebraic varieties of positive codimension and complexity $O(1)$.  As this assertion only involves $\overline{F}$ and not $F$, we may now diagonalise $b$ over $\overline{F}$, and work in a basis in which $b$ is diagonal (with coefficients in $\overline{F}$ rather than in $F$).  The torus $\overline{Z(b)}$ is now the group $T(\overline{F})$ of diagonal matrices in $\operatorname{SL}_d(\overline{F})$.  It now suffices to establish the following claim:

\begin{proposition}[Quantitative generic non-degeneracy]\label{quant}  Let $k$ be an algebraically closed field, and let $d \geq 1$; we allow all implied constants to depend on $d$.  Then for all $h \in \operatorname{SL}_d(k)$ outside of $O(1)$ algebraic varieties of positive codimension and complexity $O(1)$, the map $\tilde \phi_h: \operatorname{SL}_d(k) \times T(k) \to \operatorname{SL}_d(k)$ defined by
\begin{equation}\label{tphi}
\tilde \phi_{h}( g, c) := gc^{-1} h^{-1} g^{-1} c^{-1} h
\end{equation}
is dominant, where $T(k)$ denotes the group of diagonal matrices in $\operatorname{SL}_d(k)$.
\end{proposition}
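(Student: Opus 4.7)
The plan is to exhibit a single $h^* \in \operatorname{SL}_d(k)$ for which the differential of $\tilde\phi_{h^*}$ is surjective at a smooth point (forcing $\tilde\phi_{h^*}$ to be dominant), and then to observe that the set of $h$ for which dominance fails is cut out by algebraic equations of complexity $O_d(1)$, hence is a proper subvariety of positive codimension.

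First I would compute the differential of $\tilde\phi_h$ at the base point $(1_d, 1_d)$. Writing $g = 1 + \varepsilon X$ with $X \in \mathfrak{sl}_d(k)$ and $c = 1 + \varepsilon Y$ with $Y \in \mathfrak{t}(k) := \operatorname{Lie}(T(k))$, a short first-order expansion of $(1+\varepsilon X)(1-\varepsilon Y)\, h^{-1}\, (1-\varepsilon X)(1-\varepsilon Y)\, h$ yields
$$d\tilde\phi_h\big|_{(1,1)}(X,Y) \;=\; (X - Y) - h^{-1}(X + Y) h \;=\; (I - A)X - (I + A)Y,$$
where $A := \operatorname{Ad}(h^{-1})$ is the operator $M \mapsto h^{-1} M h$ on $\mathfrak{sl}_d(k)$. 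I would then take $h^* = \operatorname{diag}(\lambda_1, \ldots, \lambda_d) \in T(k)$ with distinct $\lambda_i$ of product $1$. Then $A^*$ fixes $\mathfrak{t}$ pointwise and scales each off-diagonal matrix unit $E_{ij}$ (for $i \neq j$) by $\lambda_j/\lambda_i \neq 1$. Hence $(I - A^*)\mathfrak{sl}_d$ is precisely the off-diagonal part of $\mathfrak{sl}_d$, of dimension $d^2 - d$, while---provided $\operatorname{char}(k) \neq 2$---$(I + A^*)\mathfrak{t} = 2\mathfrak{t} = \mathfrak{t}$; together these two images span $\mathfrak{sl}_d$. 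Thus $d\tilde\phi_{h^*}|_{(1,1)}$ is surjective, and since $(1,1)$ is a smooth point of the smooth variety $\operatorname{SL}_d(k) \times T(k)$, $\tilde\phi_{h^*}$ is smooth (in particular dominant) near $(1,1)$.

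Non-dominance of $\tilde\phi_h$ is equivalent to the simultaneous vanishing, as polynomial functions of $(g,c)$, of every $(d^2-1) \times (d^2-1)$ minor of the Jacobian of $\tilde\phi_h$. Each coefficient of each such minor is a polynomial in the entries of $h$ of degree $O_d(1)$, so the non-dominant locus $\Sigma \subset \operatorname{SL}_d(k)$ is the common zero set of $O_d(1)$ polynomials of degree $O_d(1)$, i.e.\ has complexity $O_d(1)$. The previous paragraph furnishes $h^* \notin \Sigma$, so $\Sigma$ is a proper closed subvariety of positive codimension, and for every $h \in \operatorname{SL}_d(k) \setminus \Sigma$ the map $\tilde\phi_h$ is dominant, giving the claim.

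The main subtlety is the characteristic-$2$ case, where $2\mathfrak{t} = 0$ and the identity $I - A = I + A$ collapses the image at $(1,1)$ into $(I + A)\mathfrak{sl}_d$, which has codimension $d-1$ for $h$ regular semisimple; no choice of $h$ renders the differential surjective at $(1,1)$. Handling this requires shifting to a different base point $(g_0, c_0)$ chosen so that the $g$-variations and $c$-variations together span the tangent space $T_{k_0}\operatorname{SL}_d$ (for example, with $g_0$ conjugating $T$ into a torus transverse to $\mathfrak{z}(h^*)$). The overall strategy---produce one $h^*$ with surjective differential somewhere, then invoke the Jacobian-rank/complexity argument above---is unchanged, but this explicit verification is the primary technical obstacle.
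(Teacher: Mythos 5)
Your core computation coincides with the paper's: you compute $D\tilde\phi_h(1,1)(X,Y) = X - h^{-1}Xh - Y - h^{-1}Yh$, restrict to a diagonalizable regular semisimple $h$, and check that the $X$-part has image equal to the off-diagonal matrices while the $Y$-part fills out the diagonal; the paper writes $h = ADA^{-1}$ and verifies full rank at $A = 1$, which is precisely your choice $h^* = D$. Where you genuinely diverge is in passing from ``full rank for one $h^*$'' to the bound on the complexity of the exceptional set. The paper isolates the qualitative Proposition \ref{quali} and then upgrades it to the quantitative Proposition \ref{quant} by a compactness/ultraproduct argument via \cite[Appendix A]{bgt-product}. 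You instead argue directly that the locus $\Sigma$ of $h$ on which every $(d^2-1)\times(d^2-1)$ Jacobian minor of $\tilde\phi_h$ vanishes identically on $\operatorname{SL}_d(k)\times T(k)$ is cut out by $O_d(1)$ polynomials of degree $O_d(1)$ in $h$, and that $h\notin\Sigma$ forces a surjective differential somewhere, hence dominance by smoothness and irreducibility of the source. Your route is more elementary and avoids model theory, at the cost of some bookkeeping to ensure that ``vanishes identically on $\operatorname{SL}_d\times T$'' really reduces to finitely many polynomial conditions of bounded degree on $h$ (one must clear denominators and reduce modulo the defining ideal of $\operatorname{SL}_d\times T$, or use an explicit rational parametrization). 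One small imprecision: the stated equivalence ``non-dominance $\Leftrightarrow$ all Jacobian minors vanish identically'' fails in positive characteristic (Frobenius is dominant with identically zero Jacobian); only the implication ``some minor is nonzero at some point $\Rightarrow$ dominant'' holds in general, but that is the direction your argument actually uses, so no harm results.

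The characteristic-$2$ gap you flag is real. Writing $A = \operatorname{Ad}(h^{-1})$, in characteristic $2$ one has $D\tilde\phi_h(1,1)(X,Y) = (I-A)X - (I+A)Y = (I-A)(X+Y)$, whose image lies in $(I-A)\mathfrak{sl}_d$, which has codimension $\dim(\ker(I-A)\cap\mathfrak{sl}_d) \geq d-1 > 0$ for \emph{every} $h\in\operatorname{SL}_d(k)$; so no choice of $h$ yields a surjective differential at the identity. You should be aware that the paper's proof of Proposition \ref{quali} has precisely the same unaddressed issue: its final step computes $\operatorname{diag}(Y' + D^{-1}Y'D) = 2\operatorname{diag}(Y')$, which vanishes when $2 = 0$, yet the proposition is stated for all algebraically closed $k$. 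Your proposed remedy---evaluating the differential at a different base point $(g_0,c_0)$---is the natural one, but since you do not carry it out, your argument (like the paper's as written) remains incomplete in characteristic $2$.
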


Indeed, by setting $k$ equal to the algebraic closure $\overline{F}$ of $F$, and noting that $\phi_{b,h} =\tilde \phi_h h^{-2}$, the claim follows.  (We have shifted $\tilde \phi_h$ in order to map the identity $(1,1)$ to the identity $1$.)

It turns out that by using an ultraproduct argument, one can show that Proposition \ref{quant} is implied by the following, seemingly weaker, qualitative variant of that proposition, in which the uniform bounds on the exceptional set are dropped:

\begin{proposition}[Qualitative generic non-degeneracy]\label{quali}  Let $k$ be an algebraically closed field, and let $d \geq 1$.  Then for generic $h \in \operatorname{SL}_d(k)$ (that is, for all $h$ outside of a finite union of varieties of positive codimension), the map $\tilde \phi_h: \operatorname{SL}_d(k) \times T(k) \to \operatorname{SL}_d(k)$ defined by \eqref{tphi} is dominant.
\end{proposition}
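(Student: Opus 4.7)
The plan is to establish dominance of $\tilde\phi_h$ for generic $h$ by a differential calculation at a single base point. Since surjectivity of the Jacobian of a polynomial map is a Zariski-open condition in all its parameters, it suffices to exhibit one triple $(h_0, g_0, c_0) \in \operatorname{SL}_d(k) \times \operatorname{SL}_d(k) \times T(k)$ at which the differential $d\tilde\phi_{h_0}|_{(g_0,c_0)} : \mathfrak{sl}_d(k) \oplus \mathfrak{t}(k) \to T_{\tilde\phi_{h_0}(g_0,c_0)}\operatorname{SL}_d(k)$ is surjective; for then the set
$$V := \{h \in \operatorname{SL}_d(k) : d\tilde\phi_h|_{(g_0,c_0)} \text{ is surjective}\}$$
is a nonempty Zariski-open subset of the irreducible variety $\operatorname{SL}_d(k)$, hence dense, and for each $h \in V$ the image of $\tilde\phi_h$ contains a Zariski neighbourhood of $\tilde\phi_h(g_0,c_0)$, so $\tilde\phi_h$ is dominant.

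For the base point I would take $(g_0,c_0) = (1,1)$ (at which $\tilde\phi_h(1,1) = 1$) and choose $h_0 \in T(k)$ a diagonal matrix with pairwise distinct entries $\lambda_1,\ldots,\lambda_d$.  Parametrising deformations by $g = 1 + \epsilon X + O(\epsilon^2)$ with $X \in \mathfrak{sl}_d(k)$ and $c = 1 + \epsilon Y + O(\epsilon^2)$ with $Y$ in the Lie algebra $\mathfrak{t}(k)$ of $T(k)$, and expanding the word $gc^{-1}h_0^{-1}g^{-1}c^{-1}h_0$ to first order in $\epsilon$, one obtains the compact formula
$$d\tilde\phi_{h_0}|_{(1,1)}(X,Y) \;=\; (I - \operatorname{Ad}(h_0^{-1}))(X) \;-\; (I + \operatorname{Ad}(h_0^{-1}))(Y),$$
where $I$ denotes the identity operator on $\mathfrak{sl}_d(k)$.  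Since $h_0$ is diagonal with distinct entries, $\operatorname{Ad}(h_0^{-1})$ acts as the identity on $\mathfrak{t}(k)$ and as multiplication by $\lambda_j/\lambda_i \neq 1$ on each off-diagonal matrix unit $E_{ij}$.  Hence, when $\operatorname{char}(k) \neq 2$, the first summand has image equal to the $(d^2 - d)$-dimensional subspace of off-diagonal matrices, while the second summand reduces to $Y \mapsto 2Y$ on $\mathfrak{t}(k)$ and is therefore an isomorphism; together they span all of $\mathfrak{sl}_d(k)$ and the claim follows from the openness argument above.

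The main obstacle is the characteristic-$2$ case, where $(I + \operatorname{Ad}(h_0^{-1}))|_{\mathfrak{t}(k)} = 2\cdot\operatorname{id}$ vanishes and, more strongly, a direct check shows that the Jacobian at $(1,1)$ has rank at most $d^2 - d$ for every choice of $h$ (the differential factors through $X + Y$ in this characteristic).  To handle $\operatorname{char}(k) = 2$ I would try to move the base point off the identity --- for instance, taking $g_0$ to represent a nontrivial Weyl element in $N_{\operatorname{SL}_d}(T)(k) \setminus T(k)$, so that conjugation by $g_0$ interchanges $\mathfrak{t}(k)$ with a complementary Cartan subalgebra and introduces additional off-diagonal contributions --- and then redo the Lie algebra expansion at this new base point, hoping that the extra terms break the degeneracy.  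Failing that, one could bypass the Jacobian approach altogether by a fiber-dimension argument: verify directly that for some concrete $y_0 \in \operatorname{SL}_d(k)$ the fiber $\tilde\phi_h^{-1}(y_0)$ has the generically expected dimension $d - 1$, which already forces the image to have full dimension $d^2 - 1$.
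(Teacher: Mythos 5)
Your approach matches the paper's: reduce to surjectivity of the differential at $(g_0,c_0)=(1,1)$, observe that this is a Zariski-open condition in $h$, and verify it at a diagonal regular semisimple $h_0$ by explicit Lie algebra computation. Your formula $d\tilde\phi_{h_0}|_{(1,1)}(X,Y) = (I-\operatorname{Ad}(h_0^{-1}))(X) - (I+\operatorname{Ad}(h_0^{-1}))(Y)$ agrees with the paper's, which writes it as $X-h^{-1}Xh - Y - h^{-1}Yh$, and the linear algebra (off-diagonals spanned by the $X$-part, diagonal part supplied by $Y\mapsto 2Y$) is the same.

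You have, however, put your finger on a real issue that the paper does not address: in characteristic $2$ the quantity $\operatorname{diag}(Y'+D^{-1}Y'D)=2\operatorname{diag}(Y')$ vanishes identically, and (as you correctly observe) the differential at $(1,1)$ then factors through $X+Y$, giving rank at most $d^2-d<d^2-1$ for \emph{every} regular semisimple $h$. The paper's proof simply divides by $2$ at that step ("it suffices to show that the diagonal map $Y'\mapsto\operatorname{diag}(Y')$ has full rank") without restricting the characteristic, so the argument as written has exactly the gap you identified, even though Theorem~\ref{first} is stated for an arbitrary finite field $F$. Your instinct that one must move the base point away from $(1,1)$ (or argue via fiber dimensions) is the natural repair, but as you acknowledge you have not carried it out; note also that showing the differential fails to be surjective at $(1,1)$ does not by itself prove the map is non-dominant there, since a dominant map in positive characteristic need not be generically smooth. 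So for $\operatorname{char}(k)\neq 2$ your proof is correct and coincides with the paper's, and for $\operatorname{char}(k)=2$ both your argument and the paper's are incomplete; you deserve credit for noticing this.
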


Indeed, if Proposition \ref{quant} failed, then one could find $d \geq 1$ and a sequence $k_n$ of algebraically closed fields such that the set of $h \in \operatorname{SL}_d(k_n)$ for which $\tilde \phi_h$ fails to be dominant cannot be covered by $n$ algebraic varieties of positive codimension and complexity at most $n$.  Performing an ultraproduct with respect to a non-principal ultrafilter on the natural numbers (see \cite[Appendix A]{bgt-product}), we then obtain a new (and much larger) algebraically closed field $k$, with the property that the set of $h \in \operatorname{SL}_d(k)$ for which $\tilde \phi_h$ fails to be dominant cannot be covered by any finite number of algebraic varieties of positive codimension, contradicting Proposition \ref{quali}.  (Here we use the continuity of irreducibility and dominance with respect to ultraproducts; see \cite[Lemma A.2]{bgt-product} and \cite[Lemma A.7]{bgt-product}.)

It remains to prove Proposition \ref{quali}.  By the irreducibility of $\operatorname{SL}_d(\overline{F})$, it suffices to show that the derivative map
$$ D\tilde \phi_h(1,1): \mathfrak{sl}_d(k) \times \mathfrak{t}(k) \to \mathfrak{sl}_d(k)$$
is full rank for generic $h \in \operatorname{SL}_d(k)$, where $\mathfrak{sl}_d(k)$ is the vector space of trace zero $d \times d$ matrices over $k$, and $\mathfrak{t}(k)$ is the subspace of $\mathfrak{sl}_d(k)$ consisting of diagonal matrices over $k$ of trace zero.  From the product rule and \eqref{tphi}, we may evaluate $D\tilde \phi_h(1,1)$ explicitly as
$$ D\tilde\phi_h(1,1)(X,Y) = X - h^{-1} X h - Y - h^{-1} Y h$$
for $X \in \mathfrak{sl}_d(k)$ and $Y \in {\mathfrak t}(k)$.

We may restrict attention to $h$ which are regular semisimple (or equivalently, those $h$ whose characteristic polynomial has no repeated roots), as the complement of this set is certainly contained in a finite number of algebraic varieties of positive codimension.  We may thus diagonalise $h = ADA^{-1}$ for some $A \in \operatorname{SL}_d(k)$ and diagonal $D$ with distinct diagonal entries.  Then we have
$$ D\tilde \phi_h(1,1)(X,Y) = A (  X' - D^{-1} X' D - Y' - D^{-1} Y' D )  A^{-1}$$
where $X' := A^{-1} X A$ and $Y' := A^{-1} Y A$.  We thus see that $D\tilde \phi_h(1,1)$ is full rank if and only if the map
$$ (X',Y') \mapsto X' - D^{-1} X' D - Y' - D^{-1} Y' D$$
is a full rank map from $\mathfrak{sl}_d(\overline{F}) \times A^{-1} {\mathfrak t}(\overline{F}) A$ to $\mathfrak{sl}_d(\overline{F})$.  It thus suffices to show that this map is full rank for generic $A \in \operatorname{SL}_d(k)$ and $D \in T(k)$.

As $D$ is a diagonal matrix with distinct diagonal entries, we see that the image of $\mathfrak{sl}_d(k)$ under the map $X' \mapsto X' - D^{-1} X' D$ is the space of all matrices that vanish on the diagonal.  To show that $D\tilde \phi_h(1,1)$ has full rank, it thus suffices to show that the map $Y' \mapsto \operatorname{diag}(Y' + D^{-1} Y' D)$ has full rank from $A^{-1} {\mathfrak t}(\overline{F}) A$ to ${\mathfrak t}(\overline{F})$.  Since $\operatorname{diag}(Y' + D^{-1} Y' D) = 2 \operatorname{diag}(Y')$, it suffices to show that the diagonal map $Y' \mapsto \operatorname{diag}(Y')$ has full rank from $A^{-1} {\mathfrak t}(\overline{F}) A$ to ${\mathfrak t}(\overline{F})$ for generic $A \in \operatorname{SL}_d(k)$.  As this is clearly a Zariski-open algebraic constraint, and contains the case $A=1$, we conclude that one has full rank for generic $A$, and the claim follows.

\section{Expansion}

In the remarkable paper of Bourgain and Gamburd \cite{bourgain-gamburd}, the quasirandomness properties of $\operatorname{SL}_2(F)$, combined with the product theory in such groups (see \cite{helfgott-sl2}), were used to establish spectral gaps for the generators of various Cayley graphs.  In our notation, the results of \cite{bourgain-gamburd} established spectral gap results, a typical one of which is the assertion that with probability $1-o_{p \to \infty}(1)$, one has
$$ \| \frac{1}{4}(\delta_a + \delta_b + \delta_{a^{-1}} + \delta_{b^{-1}}) \|_{S(\operatorname{SL}_2(F_p))} \leq 1-c$$
for some absolute constant $c>0$, where $F_p$ is a finite field of prime order and $a,b$ is chosen uniformly at random from $\operatorname{SL}_2(F_p)$.  This result has since been generalised in a number of different directions; see \cite{lubo} for a survey. 

In this section, we establish some related expansion results, but instead of a probability measure (such as $\frac{1}{4}(\delta_a + \delta_b + \delta_{a^{-1}} + \delta_{b^{-1}})$) supported on a small number of points, we will establish spectral bounds on (quasi-)probability measures distributed more or less uniformly on subvarieties $V$ of $\operatorname{SL}_d$; this will play an important role in the proof of Theorem \ref{second} in later sections.  The main result is that as long as $V$ is not ``trapped'' in an algebraic subgroup of $\operatorname{SL}_d$ (or a coset thereof), there is a spectral norm bound which gains a power of $|F|$ over the trivial bound.  The arguments are very much in the spirit of Bourgain and Gamburd \cite{bourgain-gamburd}, with the main ingredients being ``escape from subvarieties'', quasirandomness, and some basic algebraic geometry.  However, due to the algebraic structure of the measures being studied, combinatorial tools such as the product theorem of Helfgott \cite{helfgott-sl2} are not required in this argument (though they could certainly be deployed in order to prove more general results, in which the measure in question is not assumed to be adapted to an algebraic subvariety).

More precisely, we will establish the following result.

\begin{proposition}[Expansion from subvarieties]\label{exp}  Let $k$ be an algebraically closed field, and let $F$ be a finite subfield of $k$.  Let $V \subset \operatorname{SL}_d(k)$ be an irreducible algebraic variety defined over $k$ of complexity at most $M$.  Suppose that $V$ is not contained in any coset $Hg$ of a proper algebraic subgroup $H$ of $\operatorname{SL}_d(k)$.  Then one has
$$ \| \mu \|_{S(\operatorname{SL}_d(F))} \ll_{d,M} |F|^{\dim(V)-c} \| \mu\|_{L^\infty(V \cap \operatorname{SL}_d(F))}$$
for all $\mu: \operatorname{SL}_d(F) \to \C$ supported on $V \cap \operatorname{SL}_d(F)$, where $c>0$ depends only on $d$.
\end{proposition}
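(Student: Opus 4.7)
\medskip

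The plan is to bound $\|\mu\|_{S(\operatorname{SL}_d(F))}$ by applying the quasirandomness estimate \eqref{mus-2} to a high convolution power of $\mu$, in the spirit of the Bourgain--Gamburd method \cite{bourgain-gamburd}. Normalise $\|\mu\|_{L^\infty(V \cap \operatorname{SL}_d(F))} = 1$, so the target inequality becomes $\|\mu\|_{S(\operatorname{SL}_d(F))} \ll_{d,M} |F|^{\dim V - c}$. Let $\tilde\mu(g) := \overline{\mu(g^{-1})}$ and set $\nu := \mu * \tilde\mu$; then the convolution operator $T_\nu = T_\mu^* T_\mu$ on $L^2(\operatorname{SL}_d(F))$ is positive self-adjoint, and the spectral identity $\|T\|^{2k} = \|(T^* T)^k\|$ yields
$$
\|\mu\|_S^{2k} \;=\; \|\nu^{*k}\|_S \qquad \text{for all } k \geq 1.
$$
It therefore suffices to bound $\|\nu^{*k_0}\|_S$ for a suitable $k_0 = O_d(1)$, after which extracting a $2k_0$-th root delivers the claimed bound on $\|\mu\|_S$.

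\medskip

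The first main ingredient is an ``escape from subvarieties'' argument. Consider the Zariski closures $W_k := \overline{(V V^{-1})^k}$ for $k \geq 1$. These are irreducible subvarieties of $\operatorname{SL}_d(k)$ containing the identity (since $1 \in V V^{-1}$), forming an increasing chain $W_1 \subset W_2 \subset \cdots$. If $\dim W_k = \dim W_{k+1}$ for some $k$, then irreducibility forces $W_k = W_{k+1}$, and iterating shows the chain stabilises at some $W := W_k$. This $W$ is closed under multiplication (because $W \cdot W \subset \overline{(V V^{-1})^{2k}} = W$) and symmetric (as $V V^{-1}$ is), so $W$ is a closed algebraic subgroup of $\operatorname{SL}_d(k)$; and since $V v_0^{-1} \subset V V^{-1} \subset W$ for any $v_0 \in V$, the hypothesis that $V$ lies in no coset of a proper algebraic subgroup forces $W = \operatorname{SL}_d(k)$. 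As dimension strictly increases by at least $1$ per step and is bounded by $d^2-1$, stabilisation occurs within $k_0 := d^2-1$ steps, a bound depending only on $d$. Consequently, the word map
$$
\phi: V^{2k_0} \to \operatorname{SL}_d(k), \qquad (a_i, b_i)_{i=1}^{k_0} \mapsto a_1 b_1^{-1} \cdots a_{k_0} b_{k_0}^{-1}
$$
is dominant, and its generic fibre has dimension $2k_0 \dim V - (d^2 - 1)$.

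\medskip

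For the quantitative step, bound $\|\nu^{*k_0}\|_{\ell^2(G)}^2 \leq \|\nu^{*k_0}\|_{L^\infty(G)} \cdot \|\nu^{*k_0}\|_{\ell^1(G)}$. The $\ell^1$ factor satisfies $\|\nu^{*k_0}\|_{\ell^1} \leq \|\nu\|_{\ell^1}^{k_0} \leq \|\mu\|_{\ell^1}^{2k_0} \ll_M |F|^{2k_0 \dim V}$, using the Lang--Weil bound $|V \cap \operatorname{SL}_d(F)| \ll_M |F|^{\dim V}$ (Proposition \ref{lang-weil}). For the $L^\infty$ factor, note that $\nu^{*k_0}(x)$ is a weighted count (weights bounded by $1$) of the $F$-points of $\phi^{-1}(x)$; dominance together with the Schwarz--Zippel bound (Lemma \ref{schwarz}) yields $\|\nu^{*k_0}\|_{L^\infty(G)} \ll_{d,M} |F|^{2k_0 \dim V - (d^2-1)}$. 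Feeding these into \eqref{mus-2} with $D \gg_d |F|^{d-1}$ (Proposition \ref{quasi-special}) and $|G| \ll_d |F|^{d^2-1}$,
$$
\|\nu^{*k_0}\|_S \;\leq\; D^{-1/2} |G|^{1/2} \|\nu^{*k_0}\|_{\ell^2(G)} \;\ll_{d,M}\; |F|^{2k_0 \dim V - (d-1)/2}.
$$
Taking $2k_0$-th roots then gives $\|\mu\|_S \ll_{d,M} |F|^{\dim V - (d-1)/(4k_0)}$, establishing the proposition with $c = 1/(4(d+1))$, depending only on $d$.

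\medskip

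The principal technical obstacle will be the uniform $L^\infty$ bound on $\nu^{*k_0}$: fibres of $\phi$ over special points can have excess dimension beyond the generic value $2k_0 \dim V - (d^2-1)$. For example, $\phi^{-1}(1)$ contains the diagonal $\{(a_i, b_i) : a_i = b_i\}$, of dimension $k_0 \dim V$, which fits within the generic-fibre bound precisely when $k_0 \dim V \geq d^2-1$; this is ensured by $k_0 = d^2-1$ together with $\dim V \geq 1$ (the latter itself following from the hypothesis, since a single point would lie in a coset of the trivial subgroup). Handling other potential excess components of $\phi^{-1}(x)$ at arbitrary $x$ requires upper-semicontinuity of fibre dimension together with complexity control on the constructible ``jumping loci'' where fibres exceed the generic size, which the algebraic-geometric lemmas developed earlier in the paper are designed to supply.
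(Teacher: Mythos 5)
Your proposed route is genuinely different from the paper's and the overall strategy is sound, but there is one step where the argument as written does not close. The paper proves this by a \emph{downward induction on $\dim(V)$}: one application of $TT^*$ produces a measure concentrated on a variety $W'=\overline{VV^{-1}}$ of strictly larger dimension, the measure is split into an exceptional part (on the set $\Sigma$ from Proposition \ref{quant-dom}, handled in $\ell^1$) and a main part (handled by the induction hypothesis), and the process bottoms out at $\dim V = d^2-1$ where \eqref{mus-2} applies directly. You instead take a single high power $\nu^{*k_0}$, $\nu=\mu*\tilde\mu$, and try to land directly in the top-dimensional case. Your escape argument --- the irreducible chain $W_k=\overline{(VV^{-1})^k}$, the fact that $\dim W_k = \dim W_{k+1}$ forces stabilization at a closed subgroup, and the conclusion that $W_{d^2-1}=\operatorname{SL}_d(k)$ under the no-coset hypothesis --- is correct, and the identity $\|\mu\|_S^{2k}=\|\nu^{*k}\|_S$ is valid since $T_\nu=T_\mu^*T_\mu$ is positive.

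The gap is the claimed uniform bound $\|\nu^{*k_0}\|_{L^\infty(G)}\ll_{d,M}|F|^{2k_0\dim V-(d^2-1)}$. Dominance of the word map $\phi$ only controls the \emph{generic} fibre; over special $x$ the fibre $\phi^{-1}(x)$ can have dimension far exceeding $2k_0\dim V-(d^2-1)$, so the pointwise bound is simply false in general, and a global $L^\infty$--$\ell^1$ interpolation on $\nu^{*k_0}$ cannot be carried out as stated. Your final paragraph gestures at ``jumping loci'' (base-side upper semicontinuity of fibre dimension), but that does not rescue an $L^\infty$ bound: the loci where fibres jump are small in the base, yet $\|\cdot\|_{L^\infty}$ is sensitive to a single bad $x$. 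The correct fix --- and this is precisely what the paper does at each inductive step --- is to decompose in the \emph{domain}: Proposition \ref{quant-dom} applied to $\phi\colon V^{2k_0}\to\operatorname{SL}_d(k)$ gives a subset $\Sigma\subset V^{2k_0}$ of dimension $<2k_0\dim V$ and complexity $O_{d,M}(1)$ outside of which fibres have dimension $\le 2k_0\dim V-(d^2-1)$. Writing $\nu^{*k_0}=\nu_1+\nu_2$ with $\nu_1$ the contribution of tuples in $\Sigma$ and $\nu_2$ the rest, one gets $\|\nu_1\|_{\ell^1}\ll|F|^{2k_0\dim V-1}$ (Schwarz--Zippel on $\Sigma$, then \eqref{mus}) and $\|\nu_2\|_{L^\infty}\ll|F|^{2k_0\dim V-(d^2-1)}$ with $\|\nu_2\|_{\ell^1}\ll|F|^{2k_0\dim V}$, after which \eqref{mus-2} plus Proposition \ref{quasi-special} and \eqref{lao} gives $\|\nu_2\|_S\ll|F|^{2k_0\dim V-(d-1)/2}$, and you may extract the $2k_0$-th root. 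With this splitting your proof is complete. (A minor point: the $\ell^1$ bound $|V\cap\operatorname{SL}_d(F)|\ll|F|^{\dim V}$ should be cited from Schwarz--Zippel, Lemma \ref{schwarz}, since $V$ is only assumed defined over $k$, not over $F$.) One advantage of your route, once repaired, is that it gives the polynomial exponent $c\gg 1/k_0 = 1/(d^2-1)$ directly, whereas the paper's doubly-exponential-in-dimension recursion gives $c = 2^{-2^{\dim V - d}}$.
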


Recall that $\| \|_{S(G)}$ is the reduced spectral norm, defined in \eqref{reduced}.

\begin{proof}  We perform a downward induction on $\dim(V)$, which is an integer between $0$ and $\dim(\operatorname{SL}_d) = d^2-1$.  When $\dim(V) = \dim(\operatorname{SL}_d)$, the claim follows from \eqref{mus-2}, \eqref{lao}, and Proposition \ref{quasi-special}. Now suppose that $\dim(V) < \dim(\operatorname{SL}_d)$, and that the claim has already been proven for all larger values of $\dim(V)$.

We normalise $\| \mu\|_{L^\infty(V \cap \operatorname{SL}_d(F))} := |F|^{-\dim(V)}$, and allow all implied constants to depend on $d$ and $M$, so our task is now to show that
$$ \| \mu \|_{S(\operatorname{SL}_d(F))} \ll |F|^{-c}.$$

Recall the \emph{$TT^*$ identity}
$$ \|TT^* \|_{\operatorname{op}} = \|T\|_{\operatorname{op}}^2$$
whenever $T$ is a bounded linear operator between Hilbert spaces.  Applying this to the convolution operator $f \mapsto f * \mu$ on the Hilbert space of mean zero functions on $L^2(G)$, we conclude that
$$ \| \mu * \tilde \mu \|_{S(\operatorname{SL}_d(F))} = \| \mu \|_{S(\operatorname{SL}_d(F))}^2$$
where $\tilde \mu: G \to \C$ is the function $\tilde \mu(g) := \overline{\mu(g^{-1})}$.  It will thus suffice to show that
$$  \| \mu * \tilde \mu \|_{S(\operatorname{SL}_d(F))} \ll |F|^{-c}$$
for some $c>0$ depending only on $m,d$.  (Note that as there are only $O(1)$ different values of $\dim(V)$, we may allow the value of the constant $c$ to change with each step of the induction.)  

We consider the product map $\phi: V \times V \to \operatorname{SL}_d(k)$ given by $\phi(v,w) := vw^{-1}$, and let $W'$ be the Zariski closure of $\phi(V \times V)$.  As $V \times V$ is irreducible, $W'$ is also irreducible.  As $W'$ contains a translate of $V$, we have $\dim(W') \geq \dim(V)$.  We claim that we in fact have strict inequality $\dim(W') > \dim(V)$.  To see this, suppose for contradiction that $\dim(W')=\dim(V)$.  Then for each $w \in V$, $Vw^{-1}$ is contained in the irreducible variety $W'$, and has the same dimension as $W'$, and so $Vw^{-1} = W'$ for all $w \in V$.  This implies that $W' (W')^{-1} = \phi(V \times V) \subset W'$, or in other words that $W'$ forms a group, and is thus a proper algebraic subgroup of $\operatorname{SL}_d(k)$.  But $V$ is contained in a coset of $W$, contradicting the hypothesis on $V$.  Thus we have $\dim(W') > \dim(V)$.

We now apply Proposition \ref{quant-dom}, to conclude that $W'$ has complexity $O(1)$, and that there is a subset $\Sigma$ of $V \times V$ covered by $O(1)$ varieties of complexity $O(1)$ and dimension strictly less than $2\dim(V)$, such that for each $w \in W'$, the set $\{ (v,v') \in V \times V \backslash \Sigma: \phi(v,v')= w \}$ is contained in $O(1)$ varieties of complexity $O(1)$ and dimension at most $2\dim(V) - \dim(W')$.  Applying the Schwarz-Zippel bound (Lemma \ref{schwarz}), we conclude that
\begin{equation}\label{sag}
 |\Sigma \cap (G \times G)| \ll |F|^{2\dim(V)-1}
 \end{equation}
and
\begin{equation}\label{sag-2}
 |\{ (v,v') \in ((V  \times V) \cap (G \times G)) \backslash \Sigma: \phi(v,v')= w \}| \ll |F|^{2\dim(V)-\dim(W')}.
 \end{equation}
Next, we expand
$$ \mu * \tilde \mu(w)  = \sum_{(v,v') \in (V \times V) \cap (G \times G): \phi(v,v') = w} \mu(v) \overline{\mu(v')}$$
and then decompose
$$ \mu * \tilde \mu  = \mu_1 + \mu_2$$
where
$$ \mu_1(w) := \sum_{(v,v') \in \Sigma \cap (G \times G): \phi(v,v') = w} \mu(v) \overline{\mu(v')} $$
and
$$ \mu_2(w) := \sum_{(v,v') \in ((V \times V) \cap (G \times G)) \backslash \Sigma: \phi(v,v') = w} \mu(v) \overline{\mu(v')}.$$
As $\|\mu\|_{L^\infty(V)}  = |F|^{-\dim(V)}$, we see that
\begin{equation}\label{mu1g}
\begin{split}
 \|\mu_1\|_{\ell_1(G)} &\leq \sum_{(v,v') \in \Sigma \cap (G \times G)} |F|^{-\dim(V)} |F|^{-\dim(V)} \\
 &\ll |F|^{-1}
 \end{split}
\end{equation}
thanks to \eqref{sag}.  By \eqref{mus}, we thus have
$$
 \|\mu_1\|_{S(G)} \ll |F|^{-1}.$$
Next, from  \eqref{sag-2} and the normalisation $\|\mu\|_{L^\infty(V)}  = |F|^{-\dim(V)}$ we have
$$ \mu_2(w) \ll |F|^{2\dim(V)-\dim(W')} |F|^{-\dim(V)} |F|^{-\dim(V)} = |F|^{-\dim(W')}$$
for all $w\in G$.  As $\mu_2$ is supported on $W'$, we conclude from induction hypothesis that
$$ \|\mu_2\|_{S(G)} \ll |F|^{-c}$$
for some $c>0$ depending only on $d$, and the claim follows.  (Note that as $W'$ contains a translate of $V$, it cannot itself be contained in a coset of a proper algebraic subgroup of $G$.)
\end{proof}

We remark that the above proof in fact allows one to take $c := 2^{-2^{\dim(V)-d}}$.

We will apply Proposition \ref{exp} in the case of a function $\mu$ supported on a conjugacy class:

\begin{corollary}\label{class}  Let $F$ be a finite field, let $d \geq 2$, and let $a \in \operatorname{SL}_d(F)$ be non-central (i.e. $a$ is not a multiple of the identity). Let $C(a) := \{gag^{-1}: g \in \operatorname{SL}_d(F)\}$ be the conjugacy class of $a$.  Then
$$ \| 1_{C(a)} \|_{S(\operatorname{SL}_d(F))} \ll_d |F|^{-c} |C(a)|$$
for some $c>0$ depending only on $d$.
\end{corollary}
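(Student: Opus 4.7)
The plan is to apply Proposition \ref{exp} with $\mu = 1_{C(a)}$, taking $V$ to be the Zariski closure in $\operatorname{SL}_d(\overline{F})$ of the geometric conjugacy class $\{gag^{-1}: g \in \operatorname{SL}_d(\overline{F})\}$. As the Zariski closure of the image of the irreducible variety $\operatorname{SL}_d(\overline{F})$ under the $F$-regular map $g \mapsto gag^{-1}$, the variety $V$ is irreducible, defined over $F$, of complexity $O_d(1)$ and dimension $\dim V = d^2 - 1 - \dim \overline{Z(a)}$, where $\overline{Z(a)}$ denotes the algebraic centraliser of $a$. Trivially $C(a) \subset V \cap \operatorname{SL}_d(F)$ and $\|1_{C(a)}\|_{L^\infty} = 1$.

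The main obstacle is to verify the non-degeneracy hypothesis of Proposition \ref{exp}: that $V$ is not contained in any coset $Hg$ of a proper algebraic subgroup $H$ of $\operatorname{SL}_d(\overline{F})$. Suppose for contradiction that $V \subset Hg$. Then $VV^{-1} \subset H$, and since the conjugacy class (and hence $V$) is invariant under conjugation by $\operatorname{SL}_d(\overline{F})$, so is $VV^{-1}$; consequently, the algebraic subgroup $H'$ generated by $VV^{-1}$ is a normal algebraic subgroup of $\operatorname{SL}_d(\overline{F})$ contained in the proper subgroup $H$, hence itself proper. The only proper normal algebraic subgroups of $\operatorname{SL}_d(\overline{F})$ are contained in the finite centre, so $VV^{-1}$ is a finite set. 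Fixing $v_0 \in V$, the irreducible variety $Vv_0^{-1} \subset VV^{-1}$ must then be a single point, forcing $V$ itself to be a single point, which contradicts the fact that a non-central $a$ admits at least one conjugate $hah^{-1} \neq a$ in $V$.

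Having verified the hypotheses, Proposition \ref{exp} yields
$$\|1_{C(a)}\|_{S(\operatorname{SL}_d(F))} \ll_d |F|^{\dim V - c}$$
for some $c > 0$ depending only on $d$. It remains to show $|F|^{\dim V} \ll_d |C(a)|$. By orbit-stabiliser, $|C(a)| = |\operatorname{SL}_d(F)|/|Z(a)|$; combining $|\operatorname{SL}_d(F)| \gg_d |F|^{d^2-1}$ from \eqref{lao} with the Schwarz-Zippel bound (Lemma \ref{schwarz}) $|Z(a)| \leq |\overline{Z(a)} \cap \operatorname{SL}_d(F)| \ll_d |F|^{\dim \overline{Z(a)}}$, we obtain $|C(a)| \gg_d |F|^{\dim V}$, which combines with the spectral estimate to yield $\|1_{C(a)}\|_{S(\operatorname{SL}_d(F))} \ll_d |F|^{-c} |C(a)|$, as desired.
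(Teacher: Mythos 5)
Your proof is correct and follows essentially the same route as the paper: take $V$ to be the Zariski closure of the geometric conjugacy class, verify irreducibility from the image of the irreducible group $\operatorname{SL}_d(\overline{F})$ under an algebraic map, verify the non-degeneracy hypothesis of Proposition \ref{exp} by observing that $VV^{-1}$ is conjugation-invariant so its generated algebraic subgroup would be normal and hence finite by almost-simplicity, yielding the contradiction that $V$ is a point. One small virtue of your write-up is that you explicitly supply the final quantitative comparison $|C(a)| \gg_d |F|^{\dim V}$ (via the orbit-stabiliser identity $|C(a)| = |\operatorname{SL}_d(F)|/|Z(a)|$, the lower bound $|\operatorname{SL}_d(F)| \gg_d |F|^{d^2-1}$, the Schwarz--Zippel bound $|Z(a)| \ll_d |F|^{\dim \overline{Z(a)}}$, and $\dim V = d^2-1-\dim\overline{Z(a)}$); the paper's proof elides this step, passing directly from Proposition \ref{exp} to the stated bound.
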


\begin{proof}  We allow all implied constants to depend on $d$.  We apply Proposition \ref{exp} with $k$ equal to the algebraic closure of $F$, and $V$ equal to the closed conjugacy class $\overline{C(a)} := \overline{\{gag^{-1}: g \in \operatorname{SL}_d(k)\}}$.  It is clear that $V$ is an irreducible algebraic variety defined over $k$ of complexity $O(1)$; the irreducibility follows since $\operatorname{SL}_d(k)$ is irreducible and the map $g \mapsto gag^{-1}$ is algebraic.  Proposition \ref{exp} will give the desired claim unless $\overline{C(a)}$ is contained in a coset $Hg$ of a proper algebraic subgroup $H$ of $\operatorname{SL}_d(k)$.  But this implies that $H$ contains $\overline{C(a)} \cdot \overline{C(a)}^{-1}$, which implies that the group $N$ generated by $\overline{C(a)} \cdot \overline{C(a)}^{-1}$ is a proper subgroup of $\operatorname{SL}_d(k)$.  But this group is conjugation-invariant and thus normal.  It is a classical fact (see e.g. \cite{humphreys}) that the algebraic group $\operatorname{SL}_d(k)$ is almost simple, in the sense that the only normal subgroups are finite (in fact, the maximal normal subgroup is the center, or equivalently the quotient $P\operatorname{SL}_d(k)$ is simple).  This implies that $\overline{C(a)}$ is finite.  But this contradicts the hypothesis that $a$ is not central, and the claim follows.
\end{proof}

\begin{remark}   A standard application of Schur's lemma gives the identity
$$ \E_{b \in C(a)} \rho(b) = \frac{1}{\dim(V)} (\tr \rho(a)) I_V$$
for any non-trivial irreducible unitary representation $\rho: \operatorname{SL}_d(F) \to U(V)$, where $I_V$ denotes the identity operator on $V$.  From this and Remark \ref{pwt} we see that Corollary \ref{class} is equivalent to the assertion that $|\tr \rho(a)| \ll_d |F|^{-c} \dim(V)$ for any non-trivial irreducible representation $\rho: \operatorname{SL}_d(F) \to U(V)$ and any non-central $a$.  It is likely that this result could also be established directly (with an optimal value of $c$) from the representation theory of $\operatorname{SL}_d(F)$, but we will not do so here.
\end{remark}

\section{A reduction to a Borel group}\label{Reduction-sec}

We will abbreviate $o_{|F| \to \infty}()$ as $o()$ throughout the rest of this paper.

We now begin the proof of Theorem \ref{second} by making some reductions.  The first is to use the Cauchy-Schwarz inequality to reduce Theorem \ref{second} to a seemingly weaker statement in which the absolute values have been moved outside of the $g$ averaging.  In other words, we will deduce Theorem \ref{second} from the following statement:

\begin{theorem}\label{third} Let $F$ be a finite field, and set $G := \operatorname{SL}_2(F)$. Let $S$ denote the set of all elements of $\operatorname{SL}_2(F)$ that are diagonalisable over $F$.  Then for any functions $f_0,f_1,f_2,f_3: G \to \C$, we have
$$
|\E_{g \in S} \E_{x \in G} \prod_{i=0}^3 f_i(xg^{i-1}) - \prod_{i=0}^3 \E_G f_i| \ll o( \prod_{i=0}^3 \|f_i\|_{L^\infty(G)} ).$$
\end{theorem}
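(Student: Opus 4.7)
\emph{Plan.} The structural engine is that every $g\in S$ is conjugate into a fixed split torus $T\subset B$ of $G=\operatorname{SL}_2(F)$, so after conjugation the step lies in an abelian subgroup, and by the Bruhat decomposition the length-$4$ progression $(xg^{-1},x,xg,xg^2)$ is trapped inside a single right coset of the Borel $B=T\ltimes U\cong F^\times\ltimes F$. This abelianisation is what lets abelian Fourier analysis and the multidimensional Szemer\'edi theorem of Furstenberg--Katznelson operate inside $B$, and the gap between ``mean zero on a Borel coset'' and ``mean zero on $G$'' is then closed using the expansion estimates of Corollary~\ref{class} / Proposition~\ref{exp}.

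\emph{Reductions.} I would first split $f_i = c_i + f_i'$ with $c_i := \E_G f_i$ and $f_i'$ mean zero, and expand the product. The $(c_0,c_1,c_2,c_3)$-term is exactly the desired main term $\prod_i c_i$, and the $2^4-1$ other terms each contain at least one mean-zero factor. Those involving $\leq 3$ mean-zero factors collapse, after pulling out the constants, to length-$\leq 3$ correlations averaged over $g\in S$; since $|S|\gg|G|$, a Markov-type argument converts Theorem~\ref{first} into an $o(1)$ bound for these restricted-$g$ correlations. It thus suffices to assume all $f_i$ are mean zero and show
$$\E_{g\in S}\E_{x\in G}\prod_{i=0}^{3}f_i(xg^{i-1}) = o\Bigl(\prod_{i=0}^{3}\|f_i\|_{L^\infty(G)}\Bigr).$$
Parameterising regular $g = u h u^{-1}$ with $h\in T$, $u\in G$, substituting $y = xu$, and setting $f_i^u(z):=f_i(zu^{-1})$, the inner average becomes $\E_{y\in G,\,h\in T}\prod_i f_i^u(yh^{i-1})$, uniformly in $u$.

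\emph{Borel analysis via Fourier and multidim Szemer\'edi.} Since $h\in B$, left multiplication by $h$ preserves each Bruhat cell; writing $y = b^- b^+$ on the open cell $B^- B$ (the complement has density $O(|F|^{-1})$ and is negligible), the progression becomes $b^-(b^+ h^i)$, confined to the right-coset $b^- B$. In coordinates $b^+ = (t,s)\in F^\times\ltimes F$ and $h = (t_0,0)$, one computes $b^+ h^i = (tt_0^i,\,st_0^{-i})$. Splitting each $f_i^u$ along every $b^-$ into (mean over the Borel coset $b^-B$) plus (fluctuation on that coset), applying abelian Fourier analysis in the unipotent variable $s$, and taking a discrete logarithm in the torus variable $t$, the fluctuation part reduces to a length-$4$ arithmetic configuration of slope $(1,-1)$ in a two-dimensional abelian group; a quantitative form of the Furstenberg--Katznelson multidimensional Szemer\'edi theorem then yields the required $o(1)$ mixing, and this is the source of the non-polynomial rate in Theorem~\ref{second}.

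\emph{Expansion and main obstacle.} The Borel-coset-mean part is constant on each right-$B$-coset of $G$, i.e., it is a function on $G/B\cong\mathbb{P}^1(F)$, and its contribution is controlled by applying Corollary~\ref{class} (or Proposition~\ref{exp} to the appropriate conjugacy-class subvariety of $\operatorname{SL}_2$): the uniform measure on the conjugates of a single Borel has reduced spectral norm gaining a factor of $|F|^{-c}$ over the trivial bound, so biases localised to a single Borel coset are damped out on conjugation. The main difficulty is exactly this hand-off: multidim Szemer\'edi requires clean mean-zero fluctuations on each Borel coset, whereas the hypothesis we have is only global mean-zero on $G$; the decomposition into Borel mean and Borel fluctuation must be performed so that the Szemer\'edi bound, Theorem~\ref{first}, and Corollary~\ref{class} combine consistently, while the non-abelian braiding between the outer parameter $u$, the Bruhat parameter $b^-$, and the unipotent coordinate $s$ is unwound by carefully chosen Cauchy--Schwarz steps --- which is why the argument is necessarily \emph{ad hoc}.
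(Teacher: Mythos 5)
Your high-level plan is in the right direction — conjugate the shift into a Borel, exploit the abelian structure via Fourier analysis, invoke (a quantitative form of) the Furstenberg--Katznelson theorem for the slow-decaying part, and use Corollary~\ref{class} / Proposition~\ref{exp} to pass back from a single Borel to all of $G$ — and this is indeed the skeleton of the paper's argument. However, the details you fill in diverge in ways that break the argument. The paper's intermediate statement (Theorem~\ref{fourth}) concerns progressions $\Lambda_{4,B}$ in which \emph{both} $x$ and the shift $g$ range over the full Borel, and the decomposition is into the part constant on cosets of the \emph{unipotent} subgroup $U$ plus the part with mean zero on $U$-cosets. The unipotent component of the shift is essential: after Cauchy--Schwarz, it is precisely what produces the polynomial coefficients $1$, $1+t^2$, $1+t^2+t^4$ in the Fourier constraint, and these in turn are what allow the sum--product-type combinatorial bound \eqref{del} to be proved via multidimensional Szemer\'edi. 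Your version restricts the shift to the torus $T$ (after conjugation) and decomposes along $B$-cosets rather than $U$-cosets; in that setup the progression in $(\log t,\log s)$-coordinates has step proportional to $(1,-1)$ and is therefore \emph{effectively one-dimensional} (the coordinate $\log t+\log s$ is invariant and $\log t-\log s$ moves along a genuine 4-term AP). For a 1D length-$4$ AP, ``mean zero'' does not imply mixing — quadratic phases are an obstruction — and the multidimensional Szemer\'edi theorem is in any case a \emph{counting} lower bound, not a mixing statement; in the paper it is applied not to the progression itself but to the exceptional set of $(s,t)$ in \eqref{del}, to derive a contradiction via a system of polynomial eliminations. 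Your plan as stated does not reproduce any of this mechanism.

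Two further mismatches. First, the Borel-coset-mean part of your decomposition lives on $G/B\cong\mathbb{P}^1(F)$, and you propose to kill it by applying Corollary~\ref{class} to ``the uniform measure on the conjugates of a single Borel''; but the paper's application of Corollary~\ref{class} is to the average $\E_{g\in G}\mu_{gUg^{-1}}=\E_{u\in U}\frac{1}{|C(u)|}1_{C(u)}$, i.e.\ to conjugacy classes of \emph{unipotent} elements — this is exactly what the $U$-coset decomposition (not the $B$-coset one) produces. Second, after the Fourier step the paper must handle the nonzero-frequency contribution by a separate chain of Cauchy--Schwarz inequalities culminating in a Bezout bound for a conic; your proposal is silent on this piece, which is a nontrivial and essential part of the proof.
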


Let us assume Theorem \ref{third} for now and see how it implies Theorem \ref{second}.  If $f_3$ is constant, then the claim follows from Theorem \ref{first}, so we may assume without loss of generality that $f_3$ has mean zero.  We may take the $f_i$ to be real-valued, and also normalise $\|f_i\|_{L^\infty(G)}=1$ for each $i$.  Our task is now to show that
$$
\E_{g \in S} |\E_{x \in G} \prod_{i=0}^3 f_i(xg^{i-1})| = o(1).$$
By Cauchy-Schwarz, it suffices to show that
$$
\E_{g \in S} |\E_{x \in G} \prod_{i=0}^3 f_i(xg^{i-1})|^2 = o(1).$$
which we square as
$$
\E_{g \in S} \E_{x,y \in G} \prod_{i=0}^3 f_i(xg^{i-1}) f_i(yg^{i-1}) = o(1).$$
Substituting $y = hx$, we can rewrite the left-hand side as
$$
\E_{h \in G} \E_{g \in S} \E_{x \in G} \prod_{i=0}^3 f_i(xg^{i-1}) f_i(hxg^{i-1}).$$
Applying Theorem \ref{third}, we have
$$ \E_{g \in S} \E_{x \in G} \prod_{i=0}^3 f_i(xg^{i-1}) f_i(hxg^{i-1}) = \prod_{i=0}^3 \E_{x \in G} f_i(x) f_i(hx) + 
o(1)$$
for each $h \in G$, so it suffices to show that
$$ |\E_{h \in G} \prod_{i=0}^3 \E_{x \in G} f_i(x) f_i(hx)| = o(1).$$
We can bound the left-hand side in magnitude by
$$ \E_{h \in G} |\E_{x \in G} f_3(x) f_3(hx)|$$
and the claim now follows from Lemma \ref{quasmix} (applied to the reversed function $x \mapsto f_3(x^{-1})$).
 
It remains to establish Theorem \ref{third}.   We will deduce it from the following variant theorem on the standard Borel subgroup $B$ of $\operatorname{SL}_d(F)$.

\begin{theorem}\label{fourth}  
Let $F$ be a finite field, and $B$ be the subgroup of matrices in $\operatorname{SL}_2(F)$ which are upper-triangular.  Let $U$ be the normal subgroup of $B$ consisting of matrices which are equal to the identity matrix except possibly at the upper right entry.  Let $f_0,\dots,f_3: B \to \C$.  Then
$$ \Lambda_{4,B}(f_0,\dots,f_3) = \Lambda_{4,B}(f_0 * \mu_U,\dots,f_3 * \mu_U) + o( \|f_0\|_{L^\infty(B)} \dots \|f_3\|_{L^\infty(B)} )$$
where $\mu_U := \frac{1}{|U|} 1_U$.
\end{theorem}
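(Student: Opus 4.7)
The plan is to exploit the semidirect structure $B = T \ltimes U$, with $T := F^\times$ the diagonal torus and $U \cong F$ the unipotent radical, in order to reduce the claim to an abelian Fourier analysis problem on $F$ combined with the quantitative multidimensional Szemer\'edi theorem.

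By multilinearity and telescoping across the four slots, it suffices to show that $|\Lambda_{4,B}(f_0,\ldots,f_3)| = o(\prod_i \|f_i\|_{L^\infty(B)})$ whenever at least one $f_i$ satisfies $f_i * \mu_U = 0$, equivalently $\E_m f_i(\tau,m) = 0$ for every $\tau \in T$, under the parametrization $b = \begin{pmatrix}\tau & m\\ 0 & \tau^{-1}\end{pmatrix}$ of $B$. Writing $x = (s,a)$ and $g = (t,c)$, computing the torus and $U$-parts of each $xg^{i-1}$ for $i=0,1,2,3$, and making the bijective change of variables $\tau := st^{-1}$, $\mu := a$, $\nu := sc + at^{-1}$ (so $\nu$ is the $U$-part of $xg$), a direct calculation gives
\[
\Lambda_{4,B}(f_0,\ldots,f_3) = \E_{\tau,t \in T}\,\E_{\mu,\nu \in F}\,f_0(\tau,\gamma\mu-\nu)\,f_1(\tau t,\mu)\,f_2(\tau t^2,\nu)\,f_3(\tau t^3,\gamma\nu-\mu),
\]
where $\gamma := t + t^{-1}$. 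For generic $t$ (outside an $O(1)$-sized exceptional set where $\gamma \in \{0, \pm 1, \pm 2\}$), the four linear forms $\gamma\mu-\nu, \mu, \nu, \gamma\nu-\mu$ in $(\mu,\nu) \in F^2$ are pairwise non-parallel, so the inner $(\mu,\nu)$-average is a non-degenerate $4$-point, $2$-parameter abelian configuration in $F$.

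Next I Fourier-expand each $f_i$ in its $U$-variable. The mean-zero hypothesis becomes the vanishing of the trivial frequency of the relevant $\hat f_i$. The resulting expression is a sum over the torus parameters $(\tau,t) \in T^2$ and Fourier frequencies $(\xi,\zeta) \in F^2$, with the $(\mu,\nu)$-averaging forcing two linear constraints on the frequencies that depend polynomially on $\gamma(t)$. The ``coincidental'' Fourier modes for which the resulting phases happen to cancel against $\gamma(t)$ survive only for $O(1)$ values of $t$, and hence contribute $O(|F|^{-1})$ after averaging in $t$. The remaining ``generic'' modes are controlled using a quantitative finitary version of the multidimensional Szemer\'edi theorem of Furstenberg--Katznelson (e.g.\ via the hypergraph regularity/removal lemma), applied to the non-degenerate $4$-point $2$-parameter configuration on $F^2$.

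The main obstacle is this last step. The $4$-linear form on $F^2$ has Cauchy--Schwarz complexity $2$, so elementary Gowers-norm manipulations only bound it by the $U^3$-norm of one of its arguments, which is strictly stronger than mean zero; consequently one cannot close the argument pointwise in $t$. The extra cancellation must come from the outer $t$-averaging combined with the polynomial dependence of the coefficients on $t$ via $\gamma(t)$, and quantifying this across all Fourier modes forces the use of the full quantitative multidimensional Furstenberg--Katznelson theorem, which is also the source of the non-polynomial decay rate in the final bound.
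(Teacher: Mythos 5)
Your setup is sound: the reduction to a single mean-zero $f_i$, the torus/unipotent parametrization $B\cong T\ltimes U$, and the resulting formula
\[
\Lambda_{4,B}=\E_{\tau,t}\E_{\mu,\nu}f_0(\tau,\gamma\mu-\nu)f_1(\tau t,\mu)f_2(\tau t^2,\nu)f_3(\tau t^3,\gamma\nu-\mu),\qquad \gamma=t+t^{-1},
\]
all check out and match the paper after its own normalizations. You also correctly diagnose the difficulty: the inner form in $(\mu,\nu)$ has Cauchy--Schwarz complexity two, so mean-zero in one slot gives no pointwise-in-$t$ bound, and the gain must come from the $t$-average. But at that point the proposal stops being a proof and becomes a statement of the problem; no concrete mechanism is given for how the $t$-averaging is to be exploited, and the two sentences about ``coincidental modes cancelling against $\gamma(t)$'' do not describe any actual cancellation --- after Fourier-expanding all four $f_i$ in the $U$-variable, the constraints $\xi_1=\xi_3-\gamma\xi_0$, $\xi_2=\xi_0-\gamma\xi_3$ simply leave a two-parameter family of surviving modes for \emph{every} $t$, not an $O(1)$ exceptional set.

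The actual argument requires several ideas you do not have. First, by the reversal symmetry $\Lambda_{4,B}(f_0,\dots,f_3)=\Lambda_{4,B}(f_3,\dots,f_0)$ one arranges that the mean-zero index lies in $\{2,3\}$, and then one applies Cauchy--Schwarz to eliminate $f_0$ \emph{before} Fourier-expanding; this is essential because it replaces the raw Fourier coefficients $\hat f_{i,x}(\xi)$ (which have no sign) by the non-negative weights $|\hat f_{i,x}(\xi)|^2$, i.e.\ genuine (sub)probability measures that can be dominated by random functions $\eta_i:F^\times\to F$. Second, after splitting the resulting expression into the $\xi=0$ and $\xi\neq 0$ frequencies, the multidimensional Szemer\'edi theorem is applied not to the additive $4$-point $2$-parameter pattern on $F^2$ you propose but to the \emph{multiplicative} parameter group $(F^\times)^2$, producing a grid of solutions $(sr^i,tr^j)$, $-100\le i,j\le 100$, from which the unknown functions $\eta_i$ are eliminated by an explicit linear-recurrence argument to yield a nontrivial polynomial constraint on $(r,t)$; the Schwarz--Zippel lemma then bounds the number of surviving triples. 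Third, the $\xi\neq 0$ frequencies require a separate treatment (further Cauchy--Schwarz, and a count of additive quadruples on a conic section). None of these steps is visible in the proposal, so as written it does not prove the theorem; it only identifies (correctly) where the difficulty lies.
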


Let us assume Theorem \ref{fourth} for now, and show how it implies Theorem \ref{third}.  We may again assume that $f_3$ has mean zero, and that the $f_i$ are real-valued with $\|f_i\|_{L^\infty(G)}=1$ for each $i$.  Our task is to show that
$$
|\E_{g \in S} \E_{x \in G} \prod_{i=0}^3 f_i(xg^{i-1})| = o(1).$$

The first task is to replace the set $S$ by the set $B$ as follows.  Observe that $B$ is the space of all matrices in $\operatorname{SL}_2(F)$ that fix the span $\operatorname{span}(e_2)$ of the second vector $e_2$ of the standard basis $e_1,e_2$ of $F^2$.  Any conjugate $gBg^{-1}$ of $B$, where $g \in \operatorname{SL}_2(F)$, would fix another line; this new line would be identical to the original line $\operatorname{span}(e_2)$ precisely when $g \in B$, so the total number of such conjugates is
$$ |\\operatorname{SL}_2(F)| / |B| = (1+O(|F|^{-1})) |F|.$$

If $g \in S$ is regular semisimple, then it has two distinct one-dimensional eigenspaces in $F$, and thus preserves $2!=2$ distinct lines.  As such, it lies in $gBg^{-1}$ for $2 |B|$ different values of $B$.  We thus see that the number of regular semisimple elements of $S$ is equal to $\frac{|G|}{2|B|}$ times the number of regular semisimple elements of $B$.  An element of $B$ is regular semisimple if and only if its diagonal entries are distinct, so we see that the proportion of elements of $B$ that are regular semisimple is $1-O(|F|^{-1})$.  We conclude that there are $(\frac{1}{2} + O(|F|^{-1})) |G|$ regular semisimple elements of $S$.  As all but $O(|F|^{-1} |G|)$ elements of $G$ (and hence of $S$) are regular semisimple, we thus see that
$$ \E_{g \in S} f(g) = \E_{g \in G} \E_{h \in gBg^{-1}} f(h) + O(|F|^{-1})$$
for any function $f: G \to \C$ of magnitude $O(1)$.  It will thus suffice to show that
$$
\E_{g \in G} \E_{h \in gBg^{-1}} \E_{x \in G} \prod_{i=0}^3 f_i(xh^{i-1}) = o(1).$$
Fix $g \in G$.  By foliating $G$ into left cosets $agBg^{-1}$ of $gBg^{-1}$, and applying Theorem \ref{fourth} (conjugated by $g$) to each coset, we see that
$$ \E_{h \in gBg^{-1}} \E_{x \in agBg^{-1}} \prod_{i=0}^3 f_i(xh^{i-1}) 
= \E_{h \in gBg^{-1}} \E_{x \in agBg^{-1}} \prod_{i=0}^3 (f_i * \mu_{gUg^{-1}})(xh^{i-1}) + o(1)$$
for each $a$.  It thus suffices to show that
$$
\E_{g \in G} \E_{h \in gBg^{-1}} \E_{x \in G} \prod_{i=0}^3 (f_i * \mu_{gUg^{-1}})(xh^{i-1}) = o(1).$$
Applying the crude bound
$$ \left|\E_{h \in gBg^{-1}} \E_{x \in G} \prod_{i=0}^3 (f_i * \mu_{gUg^{-1}})(xh^{i-1})\right| \leq \E_{x \in G} |f_3 * \mu_{gUg^{-1}}(x)|$$
it suffices to show that
$$
\E_{g \in G} \E_{x \in G} |f_3 * \mu_{gUg^{-1}}(x)| = o(1).$$
By Cauchy-Schwarz, it suffices to show that
$$
\E_{g \in G} \E_{x \in G} |f_3 * \mu_{gUg^{-1}}(x)|^2 = o(1).$$
From the identity
$$ \E_{x \in G} |f_3 * \mu_{gUg^{-1}}(x)|^2 = \E_{x\in G} f_3(x) (f_3 * \mu_{gUg^{-1}})(x)$$
it suffices to show that
$$ |\E_{g \in G} \E_{x\in G} f_3(x) (f_3 * \mu_{gUg^{-1}})(x)| = o(1).$$
By definition of the reduced spectral norm, the left-hand side is bounded by
$$ \| \E_{g \in G} \mu_{gUg^{-1}} \|_S.$$
Observe that
$$ \E_{g \in G} \mu_{gUg^{-1}} = \E_{u \in U} \E_{g \in G} \delta_{gug^{-1}} = \E_{u \in U} \frac{1}{|C(u)|} 1_{C(u)}$$
and so by Minkowski's inequality
$$ \| \E_{g \in G} \mu_{gUg^{-1}} \|_S \leq \E_{u \in U} \frac{1}{|C(u)|} \|1_{C(u)} \|_S.$$
By Corollary \ref{class}, we may bound $\frac{1}{|C(u)|} \|1_{C(u)} \|_S$ by $|F|^{-c}$ for some $c>0$ depending only on $d$, except when $u$ is the identity element, in which case we have the trivial bound of $1$.  As $U$ has cardinality $|F|$, we obtain a net bound of $O( |F|^{-1} + |F|^{-c} )$, and the claim follows.

It remains to establish Theorem \ref{fourth}.  This is the purpose of the remaining sections of the paper.

\section{Progressions in a Borel group}

We now prove Theorem \ref{fourth}.  

By splitting each function $f_i$ into functions constant along cosets of $U$, or having mean zero along cosets of $U$, we see that it suffices to show that
$$ \Lambda_{4,B}(f_0,\dots,f_3) = o( \|f_0\|_{L^\infty(B)} \dots \|f_3\|_{L^\infty(B)} )$$
whenever at least one of $f_0,f_1,f_2,f_3$ has mean zero along cosets of $U$.  By the symmetry
$$ \Lambda_{4,B}(f_0,\dots,f_3) = \Lambda_{4,B}(f_3,\dots,f_0)$$
we may assume that $f_{i_0}$ has mean zero along cosets of $U$ for some $i_0 \in \{2,3\}$.  We may also take $f_0,f_1,f_2,f_3$ to be real-valued with $L^\infty(B)$ norm of $1$, so our task is to show that
$$ \E_{x,g \in B} f_0(x) f_1(xg) f_2(xg^2) f_3(xg^3) = o(1).$$

We will take advantage of the short exact sequence
$$ 0 \to F \to B \to F^\times \to 0$$
between the additive group $F = (F,+)$, the Borel group $B$, and the multiplicative group $F^\times := (F \backslash \{0\}, \cdot)$, given by the inclusion map $\psi: F \to B$ and the projection map $\pi: B \to F^\times$ defined by the formulae
$$ \psi(a) := \begin{pmatrix} 1 & a \\ 0 & 1 \end{pmatrix}$$
and
$$ \pi\left( \begin{pmatrix} t & a \\ 0 & t^{-1} \end{pmatrix} \right) = t^{-1}.$$
For any $a,b \in F$, we can make the change of variables $(x,g) \mapsto (\psi(a)x, \psi(b) g)$ and write
\begin{align*}
 \E_{x,g \in B} f_0(x) f_1(xg) f_2(xg^2) f_3(xg^3) &= 
\E_{x,g \in B} f_0\left(\psi(a) x\right) f_1\left(\psi(a) x\psi(b) g\right) \\
&\quad \quad \times f_2\left(\psi(a) x\psi(b) g  \psi(b) g\right) f_3\left(\psi(a) x\psi(b) g  \psi(b) g \psi(b) g\right).
\end{align*}
By using the identity
$$ x \psi(b) = \psi( \pi(x)^2 b ) x$$
for any $x \in B$ and $b \in F$, we can rewrite the above identity as
\begin{align*}
 \E_{x,g \in B} f_0(x) f_1(xg) f_2(xg^2) f_3(xg^3) &= 
\E_{x,g \in B} f_0\left(\psi(a) x\right) f_1\left(\psi(a+\pi(x)^2 b) xg\right) f_2\left(\psi(a + \pi(x)^2 b + \pi(xg)^2 b) xg^2\right) \\
&\quad\quad f_3\left(\psi(a + \pi(x)^2 b + \pi(xg)^2 b + \pi(xg^2) b) xg^3\right).
\end{align*}
On averaging in $a,b$, we conclude that
\begin{align*}
\E_{x,g \in B} f_0(x) f_1(xg) f_2(xg^2) f_3(xg^3) &= 
 \E_{x,g \in B} \E_{a,b \in F} f_{0,x}(a) f_{1,xg}\left(a+\pi(x)^2b\right) f_{2,xg^2}\left(a+\pi(x)^2b+\pi(xg)^2 b\right) \\
 &\quad\quad f_{3,xg^3}\left(a+\pi(x)^2b+\pi(xg)^2b+\pi(xg^2)^2 b\right)
\end{align*}
where $f_{i,x}: F \to \R$ are the functions
$$ f_{i,x}(a) := f_i(\psi(a) x).$$
By dilating $b$ by $\pi(x)^2$, we may simplify the above expression slightly as
\begin{align*}
 \E_{x,g \in B} \E_{a,b \in F} &f_{0,x}(a) f_{1,xg}(a+b) \\
 &\quad f_{2,xg^2}\left(a+(1+\pi(g)^2)b\right) f_{3,xg^3}\left(a+(1+\pi(g)^2+\pi(g)^4) b\right)
 \end{align*}
As is well known, the inner average has too high of a ``complexity'' to be directly treated by Fourier analysis.  However, following Gowers \cite{gowers-4aps}, we may reduce to a form tractable to Fourier analysis after applying the Cauchy-Schwarz inequality.  Indeed, from that inequality we can bound the preceding expression in magnitude by
\begin{align*}
\biggl(\E_{x,g \in B} \E_{a \in F} |\E_{b \in F} &f_{1,xg}\left(a+b\right) f_{2,xg^2}\left(a+(1+\pi(g)^2)b\right) \\
&\quad f_{3,xg^3}(a+(1+\pi(g)^2+\pi(g)^4) b)|^2\biggr)^{1/2}.
\end{align*}
We may expand this expression as
\begin{align*}
&\biggl(\E_{x,g \in B} \E_{a,b,b' \in F} f_{1,xg}(a+b)  f_{1,xg}(a+b') \\
&\quad f_{2,xg^2}\left(a+(1+\pi(g)^2) b\right) f_{2,xg^2}\left(a+(1+\pi(g)^2) b'\right) \\
&\quad f_{3,xg^3}\left(a+(1+\pi(g)^2+\pi(g)^4) b\right) f_{3,xg^3}\left(a+(1+\pi(g)^2+\pi(g)^4) b'\right)\biggr)^{1/2}.
\end{align*}
Writing $b' = b+h$ and shifting $x$ by $g$, this becomes
\begin{align*}
& \biggl(\E_{x,g \in B} \E_{h \in F} \E_{a,b \in F} \Delta_{h} f_{1,x}(a+b) \Delta_{(1+\pi(g)^2) h} f_{2,xg}\left(a+(1+\pi(g)^2)b\right) \\
& \quad \Delta_{(1+\pi(g)^2+\pi(g)^4) h} f_{3,xg^2}\left(a+(1+\pi(g)^2+\pi(g)^4) b\right)\biggr)^{1/2}
\end{align*}
where $\Delta_h f(a) := f(a) f(a+h)$.

Shifting $a$ by $b$, then dilating $b$ by $\pi(g)^{-2}$, we may simplify this slightly as
\begin{align*}
\biggl(\E_{x,g \in B} \E_{h \in F} \E_{a,b \in F} &\Delta_{h} f_{1,x}(a) \Delta_{(1+\pi(g)^2) h} f_{2,xg}(a+b)\\ 
&\quad  \Delta_{(1+\pi(g)^2+\pi(g)^4)h} f_{3,xg^2}\left(a+(1+\pi(g)^2) b\right)\biggr)^{1/2}
\end{align*}
and so our task is now to show that
\begin{equation}\label{targ-1}
\begin{split}
\E_{x,g \in B} \E_{h \in F} \E_{a,b \in F} &\Delta_{h} f_{1,x}(a) \Delta_{(1+\pi(g)^2) h} f_{2,xg}(a+b) \\ 
&\quad \Delta_{(1+\pi(g)^2+\pi(g)^4)h} f_{3,xg^2}\left(a+(1+\pi(g)^2) b\right) = o(1).
\end{split}
\end{equation}

The next step is Fourier expansion.  Consider the trilinear form
$$ \E_{a,b \in F} H_1(a) H_2(a+b) H_3(a+(1+\pi(g)^2) b)$$
for some functions $H_1,H_2,H_3: F \to \C$.  Using some arbitrary non-degenerate bilinear form $\cdot: F \times F \to \R/\Z$, we can form the Fourier series
$$ H_i(a) = \sum_{\xi \in F} \hat H_i(\xi) e(\xi \cdot a)$$
for $i=1,2,3$, where $e(x) := e^{2\pi i x}$ and
$$ \hat H_i(\xi) = \E_{a \in F} H_i(a) e(-\xi \cdot a).$$
Inserting these Fourier series and simplifying, we arrive at the identity
$$ \E_{a,b \in F} H_1(a) H_2(a+b) H_3(a+(1+\pi(g)^2) b) = \sum_{\xi \in F} \hat H_1(\xi) \hat H_2(-(1+\pi(g)^{-2})\xi) \hat H_3(\pi(g)^{-2} \xi).$$
We may thus write the left-hand side of \eqref{targ-1} as
\begin{align*}
\E_{x,g \in B} \E_{h \in F} \sum_{\xi \in F} &(\Delta_{h} f_{1,x})^\wedge(\xi) (\Delta_{(1+\pi(g)^2) h} f_{2,xg})^\wedge \left( -(1+\pi(g)^{-2})\xi \right)\\
& \quad (\Delta_{(1+\pi(g)^2+\pi(g)^4)h} f_{3,xg^2})^\wedge \left( \pi(g)^{-2} \xi \right).
\end{align*}
Splitting off the $\xi=0$ and $\xi\neq 0$ terms, we see that to prove \eqref{targ-1}, it will suffice to establish the bounds
\begin{equation}\label{targ-2}
\E_{x,g \in B} \E_{h \in F} (\Delta_{h} f_{1,x})^\wedge(0) (\Delta_{(1+\pi(g)^2) h} f_{2,xg})^\wedge( 0 )
(\Delta_{(1+\pi(g)^2+\pi(g)^4)h} f_{3,xg^2})^\wedge ( 0 ) = o(1)
\end{equation}
and
\begin{equation}\label{targ-3}
\begin{split}
\E_{x,g \in B} \E_{h \in F} \sum_{\xi \in F^\times} &|(\Delta_{h} f_{1,x})^\wedge(\xi)| |(\Delta_{(1+\pi(g)^2) h} f_{2,xg})^\wedge\left( -(1+\pi(g)^{-2})\xi \right)|\\
&\quad |(\Delta_{(1+\pi(g)^2+\pi(g)^4)h} f_{3,xg^2})^\wedge\left( \pi(g)^{-2} \xi \right)| = o(1).
\end{split}
\end{equation}

\subsection{The contribution of the zero frequency}

We now prove \eqref{targ-2}.  We have
$$ (\Delta_{h} f_{1,x})^\wedge(0) = \E_{a \in F} f_{1,x}(a) f_{1,x}(a+h)$$
and thus by Fourier expansion
$$ (\Delta_{h} f_{1,x})^\wedge(0) = \sum_{\xi_1 \in F} |\hat f_{1,x}(\xi_1)|^2 e(\xi_1 \cdot h).$$
Similarly we have
$$ (\Delta_{(1+\pi(g)^2) h} f_{2,xg})^\wedge(0) = \sum_{\xi_2 \in F} |\hat f_{2,xg}(\xi_2)|^2 e((1+\pi(g)^2)^*) \xi_2 \cdot h)$$
and
$$ (\Delta_{(1+\pi(g)^2+\pi(g)^4) h} f_{2,xg})^\wedge(0) = \sum_{\xi_3 \in F} |\hat f_{3,xg^2}(\xi_3)|^2 e((1+\pi(g)^2+\pi(g)^4) + \rho(xg^2x^{-1})^* \xi_3 \cdot h).$$
 Inserting these identities and performing the $h$ averaging, we conclude that the left-hand side of \eqref{targ-2} can be rewritten as
$$ \E_{x,g \in B} \sum_{\xi_1,\xi_2,\xi_3 \in F: \xi_1 + (1+\pi(g)^2) \xi_2 + (1+\pi(g)^2+\pi(g)^4) \xi_3 = 0}
|\hat f_{1,x}(\xi_1)|^2 |\hat f_{2,xg}(\xi_2)|^2 |\hat f_{3,xg^2}(\xi_3)|^2.$$
Recall that $f_{i_0}$ was assumed to have mean zero on cosets of $H$, which implies that we may restrict $\xi_{i_0}$ to be non-zero. We note that the quantity $|\hat f_{i,x}(\xi)|^2$ is unchanged if one multiplies $x$ on the left (or right) by an element of $U$, and so we may write
$$ |\hat f_{i,x}(\xi)|^2 = \mu_{i, \pi(x)}(\xi)$$
for some non-negative quantity $\mu_{i,t}(\xi)$, defined for $i=1,2,3$, $t \in F^\times$, and $\xi \in F$.  We can then simplify the previous expression as
\begin{equation}\label{stf}
 \E_{s,t \in F^\times} \sum_{\xi_1,\xi_2,\xi_3 \in F: \xi_1 + (1+t^2) \xi_2 + (1+t^2+t^4) \xi_3 = 0; \xi_{i_0} \neq 0}
\mu_{1,s}(\xi_1) \mu_{2,st}(\xi_2) \mu_{3,st^2}(\xi_3).
\end{equation}
To show that this expression is $o(1)$, it will suffice to establish the combinatorial bound
\begin{equation}\label{del}
\E_{s,t \in F^\times} 1_{\eta_1(s) + (1+t^2) \eta_2(st) + (1+t^2+t^4) \eta_3(st^2) = 0} = o(1)
\end{equation}
for any choice of functions $\eta_i: F^\times \to F$ for $i=1,2,3$, with $\eta_{i_0}$ non-zero.  Indeed, by the Plancherel identity we have
$$ \sum_\xi \mu_{i,s}(\xi) \leq 1$$
for all $i=1,2,3$ and $s \in F^\times$, with $\mu_{i_0,s}(0)=0$, so we may find random functions $\eta_i: F^\times \to F$ with $\eta_{i_0}$ nowhere vanishing, and with the property that
$$ \mu_{i,s}(\xi) \leq \P( \eta_i(s) = \xi )$$
for all $i=1,2,3$ and $s \in F^\times$.  Applying \eqref{del} with these functions and taking expectations, we conclude that the quantity \eqref{stf} is $o(1)$ as desired.

It remains to establish \eqref{del}, which is a bound of ``sum-product'' type, in that it is asserting a certain combinatorial incompatibility between the multiplicative and additive structures on $F$.  Assume for contradiction that we can find arbitrarily large finite fields $F$ and functions $\eta_1,\eta_2,\eta_3: F^\times \to F$ with $\eta_{i_0}$ nowhere vanishing, for which
$$ \E_{s,t \in F^\times} 1_{\eta_1(s) + (1+t^2) \eta_2(st) + (1+t^2+t^4) \eta_3(st^2) = 0} \gg 1.$$
Fix $F, \eta_1,\eta_2,\eta_3$.  Let $A \subset (F^\times)^2$ be the set of all pairs $(s,t)$ for which
$$ \eta_1(s) + (1+t^2) \eta_2(st) + (1+t^2+t^4) \eta_3(st^2) = 0,$$
thus $|A| \gg |F^\times|^2$.  Applying the multidimensional Szemer\'edi theorem (Theorem \ref{multi}) to the multiplicative group $F^\times$, we conclude that there are $\gg |F|^3$ triples $(s,t,r)$ with the property that $(sr^i, tr^j) \in A$ for all $-100 \leq i,j\leq 100$ (say), thus
\begin{equation}\label{srj}
 \eta_1(sr^i) + (1+r^{2j} t^2) \eta_2(str^{i+j}) + (1+r^{2j} t^2+ r^{4j} t^4) \eta_3(st^2r^{i+2j}) = 0
\end{equation}
for all $-100 \leq i,j \leq 100$.  We will eliminate the $\eta_i$ terms from \eqref{srj} (taking advantage of the non-vanishing nature of $\eta_{i_0}$) to obtain a non-trivial algebraic constraint on $s,t,r$, which will contradict the assertion that $\gg |F|^3$ triples $(s,t,r)$ exist with this property if $|F|$ is large enough.

We turn to the details.  Fix $s,t,r$ obeying \eqref{srj}.  If we abbreviate $\eta_k(st^{k-1} r^i)$ as $c_k(i)$, and also write $\alpha_j := 1+r^{2j} t^2$ and $\beta_j := 1+r^{2j} t^2 + r^{4j} t^4$, we have
$$ c_1(i) + \alpha_j c_2(i+j) + \beta_j c_3(i+2j) = 0$$
for all $-100 \leq i,j \leq 100$.  In particular, applying this identity for $j$ and $j+1$ and subtracting, we have
$$ \alpha_{j+1} c_2(i+j+1) - \alpha_j c_2(i+j) = \beta_j c_3(i+2j) - \beta_{j+1} c_3(i+2j+2) $$ 
for all $-90 \leq i,j \leq 90$ (say).  Replacing $(i,j)$ by $(i-2,j+2)$, $(i+2,j-1)$, and $(i, j+1)$, we obtain the system of four equations
\begin{align}
\alpha_{j+1} c_2(i+j+1) - \alpha_j c_2(i+j) &= \beta_j c_3(i+2j)  - \beta_{j+1} c_3(i+2j+2) \label{cj-1}\\
\alpha_{j+3} c_2(i+j+1) - \alpha_{j+2} c_2(i+j) &= \beta_{j+2} c_3(i+2j+2)  - \beta_{j+3} c_3(i+2j+4) \label{cj-2}\\
\alpha_j c_2(i+j+2) - \alpha_{j-1} c_2(i+j+1) &= \beta_{j-1} c_3(i+2j)  - \beta_j c_3(i+2j+2)\label{cj-3}\\
\alpha_{j+2} c_2(i+j+2) - \alpha_{j+1} c_2(i+j+1) &= \beta_{j+1} c_3(i+2j+2)  - \beta_{j+2} c_3(i+2j+4) \label{cj-4}
\end{align}
for all $-80 \leq i,j \leq 80$ (say).

We now eliminate the various $c_2$ factors in this system to obtain a linear recurrence in the $c_j$.  Multiplying \eqref{cj-1} by $\alpha_{j+2}$ and \eqref{cj-2} by $\alpha_j$ and subtracting to eliminate the $c_2(i+j)$ term, we conclude that
\begin{equation}\label{doo}
 (\alpha_{j+1}\alpha_{j+2}-\alpha_{j+3}\alpha_j) c_2(i+j+1) = \beta_j \alpha_{j+2} c_3(i+2j) - (\beta_{j+1} \alpha_{j+2}+\beta_{j+2} \alpha_j) c_3(i+2j+2) + \beta_{j+3} \alpha_j c_3(i+2j+4).
\end{equation}
Similarly, if we multiply \eqref{cj-3} by $\alpha_{j+2}$ and \eqref{cj-4} by $\alpha_{j}$ and subtract to eliminate the $c_j(i+j+2)$ term, we have
$$ (\alpha_j \alpha_{j+1} - \alpha_{j-1}\alpha_{j+2}) c_2(i+j+1) = \beta_{j-1} \alpha_{j+2} c_3(i+2j) - (\beta_j \alpha_{j+2} + \beta_{j+1} \alpha_j) c_3(i+2j+2) + \beta_{j+2} \alpha_j c_3(i+2j+4).$$
A brief calculation reveals that
$$ \alpha_{j+1}\alpha_{j+2}-\alpha_{j+3}\alpha_j =r^2 (\alpha_{j}\alpha_{j+1}-\alpha_{j+2}\alpha_{j-1})$$
and so we may also eliminate $c_2(i+j+1)$ and conclude that
\begin{equation}\label{loo}
\beta'_j \alpha_{j+2} c_3(i+2j) - (\beta'_{j+1} \alpha_{j+2}+\beta'_{j+2} \alpha_j) c_3(i+2j+2) + \beta'_{j+3} \alpha_j c_3(i+2j+4) = 0
\end{equation}
for all $-80 \leq i,j \leq 80$, where
$$ \beta'_j := \beta_j - r^2 \beta_{j-1} = (1-r^{-2}) (r^{4j} t^4-r^2).$$
We continue the elimination process.  Applying \eqref{loo} with $(i,j)$ replaced by $(i+2,j-1)$, we conclude that
$$
\beta'_{j-1} \alpha_{j+1} c_3(i+2j) - (\beta'_{j} \alpha_{j+1}+\beta'_{j+1} \alpha_{j-1}) c_3(i+2j+2) + \beta'_{j+2} \alpha_{j-1} c_3(i+2j+4) = 0
$$
for all $-70 \leq i,j \leq 70$ (say).  Multiplying this equation by $\beta'_{j+3} \alpha_j$ and \eqref{loo} by $\beta'_{j+2} \alpha_{j-1}$ and subtracting, we conclude that
\begin{align*}
&(\beta'_{j-1} \beta'_{j+3} \alpha_j \alpha_{j+1} - \beta'_j \beta'_{j+2} \alpha_{j-1} \alpha_{j+2}) c_3(i+2j)\\
&= (\beta'_{j} \beta'_{j+3} \alpha_j \alpha_{j+1}+\beta'_{j+1} \beta'_{j+3} \alpha_{j-1} \alpha_j - \beta'_{j+1} \beta'_{j+2} \alpha_{j-1} \alpha_{j+2} - (\beta'_{j+2})^2 \alpha_{j-1} \alpha_j) c_3(i+2j+2)
\end{align*}
for all $-70 \leq i,j \leq 70$.

We apply this with $(i,j)$ replaced by $(i-2,1)$ and $(i-4,2)$ to conclude that
$$
(\beta'_0 \beta'_4 \alpha_1 \alpha_2 - \beta'_1 \beta'_3 \alpha_0 \alpha_3) c_3(i)
= (\beta'_1 \beta'_4 \alpha_1 \alpha_2+\beta'_2 \beta'_4 \alpha_0 \alpha_1 - \beta'_2 \beta'_3 \alpha_0 \alpha_3 - (\beta'_3)^2 \alpha_0 \alpha_1) c_3(i+2)$$
and
$$
(\beta'_1 \beta'_5 \alpha_2 \alpha_3 - \beta'_2 \beta'_4 \alpha_1 \alpha_4) c_3(i)
= (\beta'_2 \beta'_5 \alpha_2 \alpha_3 + \beta'_3 \beta'_5 \alpha_1 \alpha_2 - \beta'_3 \beta'_4 \alpha_1 \alpha_3 - (\beta'_4)^2 \alpha_1 \alpha_2) c_3(i+2)$$
for all $-60 \leq i \leq 60$ (say).  Eliminating $c_3(i+2)$, we conclude that either $c_3(i)$ vanishes for all $-60 \leq i \leq 60$, or else we have
the constraint
\begin{align*}
& (\beta'_0 \beta'_4 \alpha_1 \alpha_2 - \beta'_1 \beta'_3 \alpha_0 \alpha_3) (\beta'_2 \beta'_5 \alpha_2 \alpha_3 + \beta'_3 \beta'_5 \alpha_1 \alpha_2 - \beta'_3 \beta'_4 \alpha_1 \alpha_3 - (\beta'_4)^2 \alpha_1 \alpha_2) \\
&= (\beta'_1 \beta'_5 \alpha_2 \alpha_3 - \beta'_2 \beta'_4 \alpha_1 \alpha_4) (\beta'_1 \beta'_4 \alpha_1 \alpha_2+\beta'_2 \beta'_4 \alpha_0 \alpha_1 - \beta'_2 \beta'_3 \alpha_0 \alpha_3 - (\beta'_3)^2 \alpha_0 \alpha_1).
\end{align*}
After eliminating some factors of $(1-r^{-2})$, this is a polynomial constraint between $r$ and $t$ of bounded degree.  One can easily verify that the constraint is not a tautology (for instance, setting $r=2$ and $t=2$, the left-hand side is approximately $-1.96 \times 10^{24}$ and the right-hand side is approximately $3.61 \times 10^{32}$).  Thus, by the Schwarz-Zippel lemma, there are only $O(|F|)$ possible pairs $(r,t)$, and thus $O(|F|^2)$ triples $(r,s,t)$, that obey this constraint.  Outside of those exceptional triples, we thus have $c_3(i)$ vanishing for all $-60 \leq i \leq 60$.  Applying \eqref{doo}, we conclude that $c_2(0)$ vanishes as well, unless $\alpha_1 \alpha_2-\alpha_3 \alpha_0$ vanishes.   The latter possibility is also a bounded degree non-tautological constraint on $r,t$ and so also only occurs for $O(|F|^2)$ triples $(r,s,t)$.  Thus we see that $c_3(0)$ and $c_2(0)$ both vanish outside of these exceptional triples.  But this contradicts the assumption that $\eta_{i_0}$ never vanishes (recall that $i_0$ is either $2$ or $3$).   We have thus demonstrated that there are at most $O(|F|^2)$ triples $(r,s,t)$ for which \eqref{srj} holds for all $-100 \leq i,j \leq 100$.  But we also know that there are $\gg |F|^3$ such triples, leading to a contradiction for $|F|$ sufficiently large, as required.

\subsection{The contribution of the non-zero frequencies}

Finally, we prove \eqref{targ-3}.  This will be done by a variant of the Cauchy-Schwarz arguments used to establish Theorem \ref{first}. Observe that one multiplies $x \in G$ on the left by some element $\psi(k)$ of $U$, then $f_{i,x}$ and $\Delta_h f_{i,x}$ become translated by $k$, and the quantity $|\widehat{\Delta_{h} f_{i,x}}(\xi)|$ is unchanged.  Thus, for any $i=1,2,3$, $x \in G$, $h \in F$, and $\xi \in F^\times$, we may write
\begin{equation}\label{hdef}
|\widehat{\Delta_{h} f_{i,x}}(\xi)| = H_{i,h,\pi(x)}(\xi)
\end{equation}
for some function $H_{i,h,\pi(x)}: F^\times \to \R^+$ depending on $h$ and $\pi(x)$.  We may thus rewrite \eqref{targ-3} as
$$
\E_{s \in F^\times} \E_{h \in F} \sum_{\xi \in F^\times} H_{1,h,s}(\xi) 
\E_{t \in F^\times} H_{2,(1+t^4)h, st}( - (1+t^{-4})\xi ) H_{3,(1+t^4+t^8)h, st^2} ( t^{-4} \xi ) = o(1).$$
From Plancherel we have
$$ \sum_{\xi \in F^\times} H_{1,h,s}(\xi)^2 \leq 1$$
for all $s \in F^\times$ and $h \in F$, so by Cauchy-Schwarz it suffices to show that
$$
\E_{s \in F^\times} \E_{h \in F} \sum_{\xi \in F^\times} 
|\E_{t \in F^\times} H_{2,(1+t^4)h, st}( - (1+t^{-4})\xi ) H_{3,(1+t^4+t^8)h, st^2} ( t^{-4} \xi )|^2 = o(1),$$
which we expand as
\begin{align*}
&\E_{s,t,u \in F^\times} \E_{h \in F} \sum_{\xi \in F^\times} 
H_{2,(1+t^4)h, st}( - (1+t^{-4})\xi )^2 H_{3,(1+t^4+t^8)h, st^2} ( t^{-4} \xi )^2 \\
&\quad H^2_{2,(1+u^4)h, su}( - (1+u^{-4})\xi ) H^2_{3,(1+u^4+u^8)h, su^2} ( u^{-4} \xi ) 
= o(1).
\end{align*}
By another Cauchy-Schwarz and symmetry, it thus suffices to show that
$$
\E_{s,t,u \in F^\times} \E_{h \in F} \sum_{\xi \in F^\times} 
H^4_{2,(1+t^4)h, st}( - (1+t^{-4})\xi ) H^4_{3,(1+u^4+u^8)h, su^2} ( u^{-4} \xi ) 
= o(1).$$
There are at most $4$ values of $t$ for which $t^4 = -1$, and each of these values of $t$ contributes $O(|F|^{-1})$ to the above sum (using Plancherel's theorem $\sum_{\xi} H_{i,h,s}(\xi) \leq 1$ and the trivial bound $H_{i,h,s}(\xi) \leq 1$), and may be discarded.
Dilating $h, s, \xi$ by $(1+t^4)^{-1}$, $t^{-1}$, $-(1+t^{-4})$ respectively, we rewrite the remaining component of the above estimate as
$$
\E_{s,t,u \in F^\times} \E_{h \in F} \sum_{\xi \in F^\times} 1_{t^4 \neq -1}
H^4_{2,h,s}(\xi) H^4_{3,(1+u^4+u^8) (1+t^4)^{-1} h, st^{-1}u^2} (  - (1+t^{-4})^{-1} u^{-4} \xi ) 
= o(1).$$
Making the change of variables $(s,u,v) := (s,u,st^{-1}u^2)$, so that $t = s u^2 v^{-1}$, this becomes
$$
\E_{s,u,v \in F^\times} \E_{h \in F} \sum_{\xi \in F^\times} 1_{s^4 u^8 v^{-4} \neq -1}
H^4_{2,h,s}(\xi) H^4_{3,(1+u^4+u^8) (1+s^4 u^8 v^{-4})^{-1} h, v} (  - (1+s^{-4} u^{-8} v^{4})^{-1} u^{-4} \xi ) 
= o(1).$$
From Plancherel's theorem and the trivial bound $H_{2,h,s}(\xi) \leq 1$ we have
$$ \sum_{\xi \in F^\times} H^4_{2,h,s}(\xi)  \leq 1$$
for each $h \in F$ and $s \in F^\times$.  It will thus suffice to establish the bound
$$
\E_{u \in F^\times} 1_{s^4 u^8 v^{-4} \neq -1} H^4_{3,(1+u^4+u^8) (1+s^4 u^8 v^{-4})^{-1} h, v} (  - (1+s^{-4} u^{-8} v^{4})^{-1} u^{-4} \xi ) 
= o(1) $$
for all $\xi \in F^\times$, and all but at most $o(|F|^3)$ choices of $(s,v,h) \in F^\times \times F^\times \times F$.

Fix $s,v,h$.  Our task is to show that for all but $o(|F|^3)$ choices of $(s,v,h)$, one has
\begin{equation}\label{ua-0}
 \E_{u \in F} 1_A(u) H^4_{3,\phi(u), v}( \eta(u) )|^4 = o(1)
\end{equation}
where $A := \{ u \in F^\times: s^4 u^8 v^{-4} \neq -1\}$,
$$ \phi(u) := (1+u^4+u^8) (1+s^4 u^8 v^{-4})^{-1} h,$$
and
$$ \eta(u) := - (1+s^{-4} u^{-8} v^{4})^{-1} u^{-4} \xi.$$
We may assume that $h$ is non-zero, as this only excludes $O(|F|^2) = o(|F|^3)$ values of $(s,v,h)$.

If we write $f := f_{3,g}$ for some $g \in \pi^{-1}(v)$ and expanding out the definition \eqref{hdef} of $H_{3,h,s}$, we may rewrite \eqref{ua-0} as
\begin{equation}\label{ua}
 \E_{u \in F} 1_A(u) |\widehat{\Delta_{\phi(u)} f}(\eta(u))|^4 = o(1).
\end{equation}

The next step is to apply the Cauchy-Schwarz inequality again, in the spirit of the work of Gowers \cite{gowers-4aps}.  First, to show \eqref{ua}, it will suffice to show (using the trivial bound $|\widehat{\Delta_h f}(\eta)| \leq 1$) that
$$
 \E_{u \in F} 1_A(u) |\widehat{\Delta_{\phi(u)} f}( \eta(u) )| = o(1)$$
 or equivalently that
$$
\E_{u \in F} b(u) \widehat{\Delta_{\phi(u)} f}( \eta(u) ) = o(1)$$
 for any function $b: F \to \R$ supported on $A$ with $|b(u)| \leq 1$ for all $u$. We can expand the left-hand side as
$$
\E_{x,u \in F} b(u) f(x) f(x+\phi(u)) e( - \eta(u) \cdot x ),$$
and rearrange this as
$$ \E_{x,y \in F} f(x) f(y) K(x,y)$$
where
$$ K(x,y) := \sum_{u \in F: \phi(u)=y-x} b(u) e( -\eta(u) \cdot x).$$
Applying the Cauchy-Schwarz inequality twice, and using the boundedness of $f$, we have
$$ |\E_{x,y \in F} f(x) f(y) K(x,y)|^4 \leq \E_{x,y,x',y' \in F} K(x,y) \overline{K(x,y')} \overline{K(x',y)} K(x',y')$$
so it will suffice to show that
$$ \E_{x,y,x',y' \in F} \overline{K(x,y')} \overline{K(x',y)} K(x',y') = o(1).$$
The left-hand side may be expanded as
\begin{align*}
&|F|^{-4} \sum_{u_1,u_2,u_3,u_4 \in A} 
b(u_1) b(u_2) b(u_3) b(u_4)\\
&\quad \sum_{x,y,x',y' \in F: \phi(u_1) = x-y, \phi(u_2) = x-y', \phi(u_3) = x'-y, \phi(u_4) = x'-y'}\\
&\quad\quad e( - (\eta(u_1) - \eta(u_2) - \eta(u_3) + \eta(u_4)) \cdot x) e( (\eta(u_3) - \eta(u_4)) \cdot (x'-x) ).
\end{align*}
The quantity $x'-x$ in the summand is equal to $\phi(u_3)-\phi(u_1)$, and so this phase is constant over the inner summation.  By Fourier analysis, we see that the inner summation is thus $O(|F|)$ when $\eta(u_1) + \eta(u_4) = \eta(u_2) + \eta(u_3)$ and $\phi(u_1) + \phi(u_4) = \phi(u_2) + \phi(u_3)$, and zero otherwise.  It thus suffices to show that
$$ | \{ (u_1,u_2,u_3,u_4) \in A^4: \eta(u_1) + \eta(u_4) = \eta(u_2) + \eta(u_3); \phi(u_1) + \phi(u_4) = \phi(u_2) + \phi(u_3) \}| = o( |F|^3 ).$$
Canceling out the non-zero $h$ and $\xi$ factors, and replacing each of the $u_i$ by their fourth powers (at the cost of paying $O(1)$ in the cardinality bound), this becomes
$$ | \{ (u_1,u_2,u_3,u_4) \in A^4: \Phi(u_1) + \Phi(u_4) = \Phi(u_2) + \Phi(u_3) \}| = o(|F|^3 )$$
where $\Phi: F \to F^2$ is the rational function
$$ \Phi(u) := ( (1+u+u^2) (1 + k u^2)^{-1}, (1+k^{-1}u^{-2})^{-1} u^{-1})$$
and $k := s^4 v^{-4}$.  We can simplify $(1+k^{-1}u^{-2})^{-1} u^{-1}$ as $ku (1+ku^2)^{-1}$ and $(1+u+u^2) (1+ku^2)^{-1}$ as $k^{-1} + (1-k^{-1} + u) (1+ku^2)^{-1}$, so after excluding the $O(|F|^2)=o(|F|^3)$ triplets $(s,v,h)$ for which $k=1$, we may replace $\Phi$ by
$$ \tilde \Phi(u) := ( (1+ku^2)^{-1}, u (1+ku^2)^{-1} ).$$
This function takes values in the conic section
$$ C := \{ (x,y) \in F: x^2 + k y^2 = x \}$$
with each point in $C$ arising from at most two values of $u$, and so it suffices to show that
$$ |\{ (p_1,p_2,p_3,p_4) \in C^4: p_1+p_4 = p_2 + p_3 \}| = o( |F|^3 ).$$
But from Bezout's theorem we see that each point in $F^2$ can be expressed in at most two ways as the sum of two elements in $C$, and so the left-hand side is $O(|F|^2)$, and the claim follows.

\begin{remark} The above argument in fact allows us to replace $o(1)$ by $O(|F|^{-c})$ for some absolute constant $c>0$, for the contribution of the non-zero frequencies $\xi$.  Unfortunately, due to the reliance on the multidimensional Szemer\'edi theorem, we are unable to obtain a similarly strong bound for the contribution of the zero frequencies.
\end{remark}

\appendix

\section{Some algebraic geometry}

Throughout this appendix, $k$ is an algebraically closed field, and $F$ is a finite subfield of $k$.  The purpose of this appendix is to review some basic algebraic geometry regarding varieties and regular maps over $k$.

We begin with the definition of a variety.  For the purposes of this paper, we may restrict attention to affine varieties for simplicity, but most of the results here can be extended to other types of varieties (projective, quasiprojective, etc.).

\begin{definition}[Varieties]\label{vardef}  An \emph{(affine) variety} defined over $k$ is a subset $V \subseteq k^n$ of the form
$$ V = \{ x \in k^n: P_1(x) = \dots = P_m(x) = 0 \}$$
where $n,m$ are natural numbers, and $P_1,\dots,P_m: k^n \to k$ are polynomials.  We say that the variety has \emph{complexity at most $M$} if $n,m$ are at most $M$, and all the degrees of $P_1,\dots,P_m$ are at most $M$.  If furthermore the polynomials $P_1,\dots,P_m$ have coefficients defined over $F$, we say that $V$ is \emph{defined over $F$} (with complexity at most $M$).   A variety is \emph{(geometrically) irreducible} if it cannot be expressed as the union of two strictly smaller subvarieties.

The \emph{Zariski closure} of a subset $E$ of $k^n$ is defined to be the intersection of all the varieties in $k^n$ that contain $E$.

The \emph{dimension} of a non-empty variety $V \subset k^n$ is the largest natural number $d$ for which one has a chain
$$ \emptyset \subsetneq V_0 \subsetneq \dots \subsetneq V_d \subset V$$
of irreducible varieties $V_0,\dots,V_d$.  We adopt the convention that the empty set has dimension $-\infty$.
\end{definition}

We have the following basic upper bound for the number of $F$-points on a variety:

\begin{proposition}[Schwarz-Zippel bound]\label{schwarz}  Let $V \subset k^m$ be an affine variety defined over $k$ of complexity at most $M$ and dimension $d$.  Then
$$ |V \cap F^m| \ll_{m,M} |F|^d.$$
\end{proposition}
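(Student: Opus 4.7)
The plan is to prove the Schwarz--Zippel bound by induction on the dimension $d$ of $V$, reducing $d$ by one at each step via a hyperplane slicing argument, while tracking how the complexity changes. The key observation is that cutting by a single coordinate hyperplane $\{x_j = c\}$ adds only one linear equation, so the complexity of the sliced variety grows by at most one, and since $d \leq m$ there are at most $m$ inductive steps.

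First I would reduce to the case where $V$ is geometrically irreducible. Any affine variety of complexity at most $M$ in $k^m$ decomposes into $O_{m,M}(1)$ irreducible components, each of complexity $O_{m,M}(1)$; this is a standard effective version of the primary decomposition, provable via resultants or Gr\"obner bases. Summing the bound over components reduces to the irreducible case. For the base case $d = 0$, an irreducible zero-dimensional variety over the algebraically closed field $k$ is a single point, so trivially $|V \cap F^m| \leq 1 = |F|^0$; the empty case $d = -\infty$ is vacuous.

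For the inductive step, with $V$ irreducible of dimension $d \geq 1$, the variety $V$ contains infinitely many points, so at least one coordinate function $x_j$ must be non-constant on $V$. For each $c \in F$, the slice $V_c := V \cap \{x_j = c\}$ is a proper subvariety of the irreducible variety $V$, hence $\dim V_c \leq d-1$. The complexity of $V_c$ is at most $M+1$, since we adjoin only the linear equation $x_j - c = 0$. Applying the inductive hypothesis to each $V_c$ (after decomposing it into its $O_{m,M+1}(1)$ irreducible components) gives $|V_c \cap F^m| \ll_{m,M+1} |F|^{d-1}$, and summing over $c \in F$ yields
$$ |V \cap F^m| = \sum_{c \in F} |V_c \cap F^m| \leq |F| \cdot O_{m,M+1}(|F|^{d-1}) = O_{m,M}(|F|^d). $$
Because $d \leq m$, the complexity parameter grows by at most $m$ over the course of the induction and the constant is inflated at most $m$ times by the bounded decomposition factor, so the final implied constant depends only on $m$ and $M$.

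The main technical subtlety is securing the effective, complexity-bounded decomposition into irreducible components used in the first step; this is classical but one must quote an appropriate quantitative version of primary decomposition. The rest of the argument is a routine induction on dimension, with the one structural input being that in an irreducible variety every proper closed subvariety has strictly smaller dimension.
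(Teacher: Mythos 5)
Your argument is correct, but it takes a genuinely different route from the paper, which does not give a proof at all: it simply cites Lemma 1 of Lang--Weil (\cite[Lemma 1]{lang}). Your self-contained inductive argument via hyperplane slicing is a standard and perfectly valid alternative. The one soft spot is the opening reduction to the irreducible case: the assertion that a variety of complexity at most $M$ in $k^m$ has $O_{m,M}(1)$ geometrically irreducible components, each cut out by $O_{m,M}(1)$ equations of degree $O_{m,M}(1)$, is true but nontrivial (one typically deduces it from a B\'ezout-type degree bound together with the fact that a subvariety of bounded degree can be defined by equations of bounded degree); this deserves a precise reference, e.g.\ it can be extracted from \cite[Lemma 3.7]{bgt-product}, which the paper already cites for Proposition~\ref{quant-dom}. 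A second minor point is your bookkeeping: the complexity does not merely grow by one per inductive step, since each slice must be re-decomposed into irreducible components before the next slice, and that decomposition inflates the complexity by a multiplicative factor $O_{m,M}(1)$, not additively by one. This does not damage the argument --- the induction has depth at most $\dim V \le m$, so the final constant is still a function of $m$ and $M$ alone --- but the sentence ``the complexity parameter grows by at most $m$'' is not quite what is happening. What the two approaches buy is clear: the paper's citation is shortest, while your slicing induction is elementary, self-contained, and easily adapted to related counting estimates, at the cost of having to invoke effective primary decomposition as a black box.
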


\begin{proof}
See for instance \cite[Lemma 1]{lang}.  One can make the implied constant depend linearly on the degree of $V$, but we will not need this refinement here.
\end{proof}

In the case that $V$ is irreducible and defined over $F$, we have the following well-known refinement of Proposition \ref{schwarz}:

\begin{proposition}[Lang-Weil bound]\label{lang-weil}  Let $V \subset k^m$ be a geometrically irreducible affine variety defined over $F$ of complexity at most $M$ and dimension $d$.  Then
$$ |V \cap F^m| = (1 + O_{m,M}(|F|^{-1/2})) |F|^d.$$
In particular, if $|F|$ is sufficiently large depending on $m,M$, one has
$$ |F|^d \ll |V \cap F^m| \ll |F|^d.$$
\end{proposition}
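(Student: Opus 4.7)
The plan is to follow the original Lang--Weil strategy, proving the bound by induction on the dimension $d$, with the Weil bound for curves as the base case. When $d = 1$, we want to show $|C \cap F^m| = |F| + O_{m,M}(|F|^{1/2})$ for a geometrically irreducible affine curve $C$ of complexity at most $M$. This reduces to Weil's theorem (the Riemann hypothesis for curves): if $\tilde C$ is the smooth projective model of $C$, with genus $g$, then $||\tilde C(F)| - (|F|+1)| \leq 2g |F|^{1/2}$. The genus $g$ and the number of singular points and points at infinity are all bounded in terms of $M$ (via, e.g., the degree--genus inequality and the fact that the singular locus is cut out by bounded-degree equations), so these discrepancies are absorbed into the $O_{m,M}(|F|^{1/2})$ error.

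For the inductive step, the idea is to fibre $V$ by generic hyperplane sections. By a quantitative Bertini theorem, for all hyperplanes $H \subset k^m$ defined over $F$ outside a ``bad'' locus cut out by equations of complexity $O_{m,M}(1)$, the intersection $V \cap H$ is geometrically irreducible of dimension $d-1$ and complexity $O_{m,M}(1)$. By the Schwarz--Zippel bound (Proposition \ref{schwarz}) applied to the bad locus, the fraction of $F$-rational hyperplanes in the bad set is $O_{m,M}(|F|^{-1})$. One then sums the inductive bound over all good hyperplanes in a carefully chosen pencil or family, takes the average, and uses the trivial bound on the bad hyperplanes, to recover the count $|F|^d + O_{m,M}(|F|^{d-1/2})$ for $V$ itself.

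The main obstacle is the quantitative Bertini step: irreducibility of a hyperplane section is classically a generic (Zariski-open) condition, but to get Lang--Weil one needs to know the bad locus has complexity bounded \emph{uniformly} in $M$ (not merely that it is proper), so that one can count its $F$-points via Schwarz--Zippel. This requires making effective the standard proof that a geometrically irreducible variety has irreducible generic hyperplane section, tracking the degrees of the polynomials that witness reducibility. A technically cleaner alternative, which sidesteps induction entirely, is to invoke Deligne's resolution of the Weil conjectures: the $\ell$-adic compactly supported cohomology groups $H^i_c(V_{\bar F}, \mathbb{Q}_\ell)$ have dimensions bounded in terms of $m, M$, Frobenius acts on $H^i_c$ with eigenvalues of absolute value at most $|F|^{i/2}$, and geometric irreducibility forces exactly one top eigenvalue equal to $|F|^d$ on $H^{2d}_c$; the Grothendieck--Lefschetz trace formula then gives the result. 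Either way, the crux is separating the unique ``main'' contribution of size $|F|^d$ from the square-root cancellation in all lower strata.
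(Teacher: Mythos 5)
The paper does not prove this proposition; it simply cites \cite[Theorem~1]{lang} and treats the Lang--Weil estimate as a black box, so there is no in-paper argument against which to compare your proposal. Taken on its own terms, your sketch correctly identifies the two standard routes to the theorem: the original Lang--Weil induction on dimension via hyperplane sections, with Weil's Riemann hypothesis for curves as the base case, and the modern approach via Deligne's Weil~II together with the Grothendieck--Lefschetz trace formula.

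However, if this were intended as a self-contained proof rather than a roadmap, it has two genuine gaps. The quantitative Bertini step you flag as the ``main obstacle'' is not a technicality to be patched afterwards; it is the substantive content of the Lang--Weil argument. Showing that the locus of hyperplanes over which the section fails to be geometrically irreducible lies in a subvariety of complexity $O_{m,M}(1)$ is exactly where the original paper does its real work (via Chow forms and the study of associated correspondences), and the averaging step over a pencil of hyperplanes also requires care to handle points lying on several members of the pencil. Your sketch asserts that this can be made effective but does not do so. For the Deligne alternative, the uniform bound on $\sum_i \dim H^i_c(V_{\overline{F}}, \mathbb{Q}_\ell)$ in terms of $m$ and $M$ is itself a nontrivial theorem (due to Katz, building on Bombieri) that you invoke tacitly but would need to cite or establish. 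In short, your outline is faithful to how the theorem is actually proved, but both routes leave their central technical input unproved, whereas the paper deliberately outsources the whole statement to the literature.
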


\begin{proof}
See \cite[Theorem 1]{lang}.  Again, more precise versions of the error term are available, but we will not need them here.
\end{proof}

Now we recall the notions of regular and dominant maps between varieties.  Our definition will be somewhat complicated due to the need to assign quantitative complexities to such maps.

\begin{definition}[Regular map]\label{regmap}  Let $V \subset k^n$ and $W \subset k^m$ be affine varieties, and let $M \geq 1$.  A map $f: V \to W$ is said to be \emph{regular} with complexity at most $M$ if $V, W$ are individually of complexity at most $M$, and if one can cover $V$ by some varieties $V_1,\dots,V_r$ of complexity at most $M$ for some $r \leq M$ such that for each $1 \leq j \leq r$, the map $f|_{V_j}$ has the form $(P_{j,1}/Q_{j,1},\dots, P_{j,m}/Q_{j,m})$, where the $P_{j,l}, Q_{j,l}$ are homogeneous polynomial maps from $k^{n+1}$ to $k$ with $\deg(P_{j,l}) = \deg(Q_{j,l}) \leq M$, and the $Q_{j,l}$ are non-vanishing on $V_j$.

A regular map $\phi: V \to W$ is \emph{dominant} if $V$ is irreducible and $\phi(V)$ is Zariski-dense in $W$.
\end{definition}

The following proposition asserts (in a certain technical quantitative sense) that regular maps are always ``essentially dominant'' after a reduction in the range, and that the fibres of such maps usually have the expected dimension.

\begin{proposition}[Quantitative dominance]\label{quant-dom}  Let $V \subset k^m, W \subset k^n$ be algebraic varieties defined over $k$ of complexity at most $M$, with $V$ irreducible and let $\phi: V \to W$ be a regular map of complexity at most $M$.  Then there exists a subset $V'$ of $V$ and an irreducible subvariety $W'$ of $W$ of complexity $O_{M}(1)$, with the following properties:
\begin{itemize}
\item[(i)] (Zariski density) $V \backslash V'$ can be covered by the union of $O_{M}(1)$ varieties of complexity $O_{M}(1)$ and dimension strictly less than $\dim(V)$.
\item[(ii)]  (Controlled image)  $W'$ is equal to the Zariski closure of $\phi(V)$; in particular, $\phi: V\to W'$ is a dominant map.
\item[(iii)]  (Controlled fibres)  For each $w \in W'$, the set $\{ v \in V': \phi(v) = w \}$ can be covered by the union of $O_{M}(1)$ varieties of complexity $O_{M}(1)$ and dimension at most $\dim(V)-\dim(W')$.
\end{itemize}
\end{proposition}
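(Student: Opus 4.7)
The plan is to combine the standard (qualitative) algebraic-geometric facts about regular maps with an ultraproduct argument, much as was done in the body of the paper when Proposition \ref{quant} was reduced to Proposition \ref{quali}. I expect this reduction to be the cleanest way to obtain uniform complexity bounds.

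First I would prove the qualitative analogue, in which the complexity bounds $O_M(1)$ are dropped. Define $W' \subset W$ to be the Zariski closure of $\phi(V)$. Since $V$ is irreducible and $\phi$ is regular, $\phi(V)$ is constructible by Chevalley's theorem, and $W'$ is irreducible; set $d := \dim(W') \leq \dim(V)$. By the standard theorem on the dimension of fibres (see e.g.\ Hartshorne, Exercise II.3.22, or Mumford's Red Book), the set
$$ \Sigma := \{ v \in V : \dim_v \phi^{-1}(\phi(v)) > \dim(V) - d \} $$
is a proper Zariski closed subvariety of $V$, and setting $V' := V \setminus \Sigma$ yields (iii) automatically: every fibre of $\phi|_{V'}$ has dimension at most $\dim(V)-d$. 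Property (i) follows since $\Sigma$, being a proper closed subset of the irreducible variety $V$, decomposes into finitely many irreducible subvarieties, each of dimension strictly less than $\dim(V)$; property (ii) is immediate from the definition of $W'$.

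Second, I would upgrade this qualitative result to the desired quantitative one via an ultraproduct argument. Suppose for contradiction that the proposition fails for some $M \geq 1$. Then for each $n \geq 1$ we may find an algebraically closed field $k_n$, varieties $V_n, W_n$ and a regular map $\phi_n : V_n \to W_n$ of complexity at most $M$, with $V_n$ irreducible, such that no choice of $V'_n, W'_n$ with complexity at most $n$ and involving at most $n$ subvarieties satisfies all three conclusions. Let $k$ be the ultraproduct of the $k_n$ with respect to a non-principal ultrafilter on $\mathbf{N}$; then $k$ is again algebraically closed (Łoś's theorem). The ultraproducts $V,W$ of the $V_n,W_n$ and the ultraproduct map $\phi$ of the $\phi_n$ form a regular map of complexity at most $M$ between varieties of complexity at most $M$, with $V$ irreducible by \cite[Lemma A.2]{bgt-product}. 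Applying the qualitative result to $(V,W,\phi)$ produces $V' \subset V$ and $W' \subset W$ satisfying (i)--(iii), each described by finitely many polynomial relations of some bounded degree $M'$ and bounded count. By Łoś's theorem, these witnesses descend to witnesses $V'_n, W'_n$ for $n$ in a set belonging to the ultrafilter, of complexity at most $M'$ and using $O(1)$ subvarieties. For $n$ large enough (in particular $n > M'$) this contradicts the assumption that no such witnesses exist.

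The main obstacle is to verify that the qualitative statement transfers cleanly through the ultraproduct: one needs that Zariski closure, irreducibility, dimension, and fibre dimension all commute, in an appropriate sense, with the ultraproduct construction. The continuity of irreducibility and dominance under ultraproducts are in \cite[Lemmas A.2, A.7]{bgt-product}, and analogous statements for Zariski closure and dimension follow from Łoś's theorem combined with the fact that each of these notions is characterised by first-order definable conditions once one fixes the complexity. A subtle point is that the bad locus $\Sigma$ is \emph{a priori} defined by the semi-continuity of fibre dimension rather than by an explicit system of polynomial equations; however, standard elimination theory (or the structure theorem for constructible sets) produces, for any bound on the complexity of $\phi$, an explicit finite family of polynomial equations and inequations of bounded degree cutting out $\Sigma$, which is exactly what is needed for the transfer argument to close.
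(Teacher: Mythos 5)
The paper does not give an argument here at all: it simply cites \cite[Lemma 3.7]{bgt-product} for the result. Your proposal fills in the argument, and it does so by precisely the technique that underlies the cited lemma (and indeed underlies much of the quantitative algebraic geometry in Appendix A of \cite{bgt-product}): first prove the qualitative statement by classical algebraic geometry (Chevalley constructibility plus upper semicontinuity of fibre dimension), then upgrade to uniform complexity bounds by a compactness/ultraproduct argument in the style already used in the body of the paper to pass from Proposition \ref{quali} to Proposition \ref{quant}. So while your route is ``different from the paper'' only in the trivial sense that the paper outsources the proof, it is faithful to the spirit of the reference and is essentially the intended argument.

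A few points worth tightening if you were to write this out in full. In the ultraproduct step, one must make sure the transferred statement is genuinely first order once the complexity parameters are fixed; the delicate clause is condition~(iii), which quantifies over all $w\in W'$ with existentially quantified covering varieties whose defining polynomials may depend on $w$. Since the number of covering varieties and the degree bound are fixed integers, this is expressible as a first-order formula with parameters, so \L{}o\'s's theorem applies, but you should make explicit that the bound on the complexity of the fibre covers is uniform in $w$ in the ultraproduct model (which follows from an internal compactness argument, or equivalently from the constructibility of the incidence variety $\{(v,w): \phi(v)=w\}$ with uniformly bounded complexity). You correctly flag that $\Sigma$ needs to be cut out by explicit polynomials of bounded degree; the cleanest way to see this is that $\Sigma$ is the projection to $V$ of a constructible subset of $V\times V$ defined by equations of complexity $O_M(1)$, and effective Chevalley (quantifier elimination with degree bounds) gives the required complexity control. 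With those points addressed, the argument is correct.
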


\begin{proof}  This follows from \cite[Lemma 3.7]{bgt-product}.
\end{proof}

\section{A quantitative multidimensional Szemer\'edi theorem}

The purpose of this section is to establish the following multidimensional Szemer\'edi theorem:

\begin{theorem}[Multidimensional Szemer\'edi theorem]\label{multi}  Let $G = (G,+)$ be an additive group, let $k,m \geq 1$ be integers, and let $A \subset G^m$ be a set with $|A| \geq \delta |G|^m$.  Then there are $\gg_{k,m,\delta} |G|^{m+1}$ tuples $(a_1,\dots,a_m,r) \in G^{m+1}$ with the property that
$$ (a_1+i_1 r, \dots, a_m+i_m r) \in A$$
for all integers $i_1,\dots,i_m \in \{-k,\dots,k\}$.
\end{theorem}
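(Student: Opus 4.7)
I would prove Theorem~\ref{multi} by reducing it to the hypergraph removal lemma \cite{gowers-hyper, rs, rodl, tao-hyper}, which is the standard finitary route to the multidimensional Szemer\'edi theorem of Furstenberg--Katznelson \cite{fk0}.

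The first step would be to establish the qualitative version: for every $\delta' > 0$ and every $k, m \geq 1$, there exists $N_0 = N_0(\delta', k, m)$ such that whenever $G$ is a finite abelian group with $|G| \geq N_0$ and $A \subset G^m$ has $|A| \geq \delta' |G|^m$, there exists at least one tuple $(a_1, \ldots, a_m, r) \in G^{m+1}$ with $r \neq 0$ and $(a_1 + i_1 r, \ldots, a_m + i_m r) \in A$ for all $|i_j| \leq k$. This is the standard finitary multidimensional Szemer\'edi theorem for finite abelian groups, which is a direct consequence of the hypergraph removal lemma. For $|G| < N_0$, the conclusion is already covered by the trivial $r=0$ tuples: each of the $|A| \geq \delta |G|^m \geq (\delta/N_0) |G|^{m+1}$ elements of $A$ yields such a degenerate configuration, which is enough after absorbing $N_0$ into the implied constant.

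To upgrade from one non-trivial configuration to the quantitative count $\gg_{k,m,\delta} |G|^{m+1}$ when $|G| \geq N_0$, I would invoke the supersaturation version of the removal lemma. Suppose for contradiction that the number of non-trivial configurations in $A$ were smaller than $\eta(k, m, \delta) |G|^{m+1}$ for a sufficiently small $\eta$. The hypergraph removal lemma then allows one to delete at most $\tfrac{\delta}{2} |G|^m$ points of $A$ to destroy every non-trivial configuration simultaneously, leaving a subset $A' \subset A$ of density at least $\delta/2$ that is free of non-trivial configurations; this contradicts the qualitative step applied to $A'$ with parameter $\delta' = \delta/2$.

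The main technical step, and the only real obstacle, is the combinatorial encoding of configurations as copies of a fixed finite hypergraph. One introduces a partite hypergraph whose vertex classes correspond to the $(2k+1)^m$ coordinates $\mathbf{i} \in \{-k,\ldots,k\}^m$ of the pattern, with hyperedges encoding the linear relations $v_{\mathbf{i}} = a_1 + i_1 r + \cdots + a_m + i_m r$ (suitably split into separating forms of the $m+1$ free parameters $a_1, \ldots, a_m, r$), so that copies of a fixed finite hypergraph in this setup correspond, up to $O_{k,m}(1)$ overcounting, to non-trivial configurations in $A$. This encoding is routine for linear patterns (as carried out for analogous configurations in \cite{gowers-hyper, tao-hyper}), and once in place, both the qualitative statement and the supersaturation follow from the removal lemma.
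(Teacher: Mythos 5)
You have identified the right tool (the hypergraph removal lemma), and your reduction of the small-$|G|$ regime to the degenerate $r=0$ configurations is fine, but your ``qualitative theorem plus supersaturation'' two-step is not how the paper proceeds, and the supersaturation step as written has a gap. The hypergraph removal lemma deletes \emph{hyperedges} of an auxiliary hypergraph, not points of $A$; converting the former into the latter is a genuine step, which the paper handles by intersecting each symmetric difference $E'_i \Delta E_i$ with the diagonal hyperplane $\Sigma = \{(a_1,\dots,a_m,a_1+\cdots+a_m)\}$ and exploiting that $E'_i \Delta E_i$ is $i$-invariant, hence meets $\Sigma$ transversely so that cardinalities drop by a factor of $|G|$. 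More seriously, the set $A'$ your argument is supposed to produce cannot have density $\geq \delta/2$: removal destroys \emph{all} copies of the pattern hypergraph, including the trivial ones with $r=0$, and a trivial configuration is nothing but a single point of $A$. So if removal succeeded while deleting fewer than $\delta|G|^m$ points from $A$, one would conclude $|A| < \delta|G|^m$, a direct contradiction with the hypothesis; there is no intermediate nonempty configuration-free $A'$ to feed into a qualitative theorem. That contradiction is exactly the engine of the paper's proof, and it renders a separate qualitative step superfluous: a single application of removal already yields $|\bigcap_i E_i| \geq \eps|G|^{m+1}$, of which all but at most $|G|^m$ tuples have $r \neq 0$.

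On the encoding, which you acknowledge as the ``only real obstacle'' and then defer: you propose a partite hypergraph with $(2k+1)^m$ vertex classes, one per point of the pattern $\{-k,\dots,k\}^m$. The paper instead first applies a lifting trick (Theorem~\ref{multi-again} and the argument preceding it), adjoining $K=(2k+1)^m$ extra coordinates so that the general pattern reduces to the ``corner'' pattern $a+re_1,\dots,a+re_m$ in $G^{m+K}$; this pattern admits a very clean $m$-partite encoding in which each set $E_i$ is invariant under the $i$-th coordinate --- precisely the invariance structure both the removal lemma and the transversality argument with $\Sigma$ require. Your direct encoding can likely be made to work, but the invariance bookkeeping is considerably heavier there, and it is exactly the part you did not carry out.
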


This is a variant of the multidimensional Szemer\'edi theorem of Furstenberg and Katznelson \cite{fk0}.  There are now many techniques to establish such results; we will derive Theorem \ref{multi} from the hypergraph removal lemma established in \cite{gowers-hyper}, \cite{rodl}, \cite{rs}, \cite{tao-hyper}.

We first observe that Theorem \ref{multi} may be deduced via a lifting trick from the following apparently weaker version:

\begin{theorem}[Multidimensional Szemer\'edi theorem, again]\label{multi-again}  Let $G = (G,+)$ be an additive group, let $m \geq 1$ be integers, and let $A \subset G^m$ be a set with $|A| \geq \delta |G|^m$.  Then there are $\gg_{m,\delta} |G|^{m+1}$ tuples $(a,r) \in G^m \times G$ with the property that
$$ a+re_1, \dots, a+re_m \in A$$
where we adopt the notation that $g(n_1,\dots,n_m) := (n_1 g,\dots,n_m g)$ whenever $g \in G$ and $n_1,\dots,n_m$ are integers, and $e_1,\dots,e_m$ is the standard basis of $\Z^m$.
\end{theorem}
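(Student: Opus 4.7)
The plan is to deduce Theorem~\ref{multi-again} from the (quantitative) hypergraph removal lemma \cite{gowers-hyper}, \cite{rodl}, \cite{rs}, \cite{tao-hyper}, via a standard parametrisation that recasts each constraint $a+re_i \in A$ as a condition on all but one of $m+1$ free variables.

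First, I would introduce variables $y_1,\dots,y_{m+1} \in G$ by setting $y_i := a_i$ for $1 \leq i \leq m$ and $y_{m+1} := a_1 + \dots + a_m + r$. Then $(a,r) \leftrightarrow (y_1,\dots,y_{m+1})$ is a bijection $G^m \times G \to G^{m+1}$, and a direct computation shows that the $j$-th coordinate of $a+re_i$ equals $y_j$ for $j \neq i$, $j \leq m$, while its $i$-th coordinate equals $y_{m+1} - \sum_{j \neq i,\, j \leq m} y_j$. In particular, the membership condition $a + re_i \in A$ becomes a function of the $m$ variables $(y_j)_{j \neq i}$ alone.

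This suggests building the $(m+1)$-partite $m$-uniform hypergraph $H$ on vertex classes $V_1,\dots,V_{m+1}$ (each a copy of $G$) as follows: for $1 \leq i \leq m$, the edge at the transversal $(y_j)_{j \neq i}$ is declared present iff the point $a+re_i$ reconstructed from these coordinates lies in $A$; for $i = m+1$, every $m$-tuple on $V_1,\dots,V_m$ is declared an edge. Simplices (complete transversal copies of $K^{(m)}_{m+1}$) in $H$ then biject with tuples $(a,r) \in G^m \times G$ with $a+re_i \in A$ for all $i = 1,\dots,m$. Moreover, each $a \in A$ yields a \emph{trivial simplex} via $r = 0$ (so $y_{m+1} = a_1 + \dots + a_m$), producing at least $\delta |G|^m$ simplices, and these are \emph{edge-disjoint} within each edge class: if two trivial simplices (for $a, a' \in A$) share an edge in class $i \in \{1,\dots,m\}$, then knowing $y_j = a_j$ for $j \neq i$, $j \leq m$ together with $y_{m+1} = \sum_j a_j$ recovers $a_i$ as well, forcing $a = a'$; the same argument works in class $m+1$.

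Finally I would invoke the hypergraph removal lemma: were $H$ to contain fewer than $\eps_{m,\delta} |G|^{m+1}$ simplices for a sufficiently small $\eps_{m,\delta} > 0$, one could delete at most $\eta |G|^m$ edges from each of the $m+1$ edge classes and destroy every simplex. But edge-disjointness of the trivial simplices forces at least $\delta |G|^m$ total edge deletions to destroy them all, so some edge class would need to lose $\gg_{m,\delta} |G|^m$ edges, contradicting the removal lemma once $\eta$ is chosen small enough. Hence $H$ has $\gg_{m,\delta} |G|^{m+1}$ simplices, which is exactly the desired bound. The main obstacle is essentially bookkeeping: verifying the ``all-but-one'' dependency in the parametrisation and the edge-disjointness of trivial simplices across every edge class. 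The subsequent lifting from Theorem~\ref{multi-again} back to Theorem~\ref{multi} alluded to in the paper is a separate (and easier) Cartesian-power manoeuvre that should not be the bottleneck.
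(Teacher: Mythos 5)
Your proposal is correct and is essentially the same argument as the paper's: both perform the change of variables $(a,r) \mapsto (a_1,\dots,a_m,s)$ with $s = a_1 + \cdots + a_m + r$, observe that the condition $a + re_i \in A$ then depends on all but the $i$-th coordinate, invoke the hypergraph removal lemma, and reach a contradiction using the edge-disjointness of the $r=0$ (``trivial'') simplices, which the paper expresses as transversality of the hyperplane $\Sigma = \{(a_1,\dots,a_m,a_1+\cdots+a_m)\}$ to each $i$-invariant perturbation. The only differences are cosmetic: you state the removal lemma in edge-deletion form and include an explicit trivial $(m+1)$-st edge class, while the paper uses the $i$-invariant-set perturbation formulation from \cite[Theorem 1.13]{tao-hyper} with just $m$ sets.
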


Indeed, to deduce Theorem \ref{multi} from Theorem \ref{multi-again}, let $K := (2k+1)^m$, and let $v_1,\dots,v_K$ be an enumeration of the $K$ $m$-tuples in $\{-k,\dots,k\}^m$.  If $A \subset G^m$, we let $\tilde A \subset G^{m+K}$ be the set
$$ \tilde A := \{ (a,b_1,\dots,b_K) \in G^m \times G^K: a + b_1 v_1 + \dots + b_K v_K \in A \}.$$
If $|A| \geq \delta |G|^m$, then it is clear (by freezing $b_1,\dots,b_K$) that $|\tilde A| \geq \delta |G|^{m+K}$.  Applyig Theorem \ref{multi-again}, we see that there are $\gg_{k,m,\delta}$ tuples $(a,b_1,\dots,b_K,r) \in G^{m+K+1}$ such that
$$ (a,b_1,\dots,b_{i-1},b_i+r,b_{i+1},\dots,b_K) \in \tilde A$$
for all $1 \leq i\leq K$, which by definition of $\tilde A$ implies that
\begin{equation}\label{arv}
a' + r v_i \in A
\end{equation}
for all $i=1,\dots,K$, where $a' := a + b_1 v_1 + \dots + b_K v_K$.  Since each $a' \in G^m$ arises from at most $|G|^K$ tuples $(a,b_1,\dots,b_K)$, we conclude that there are $\gg_{k,m,\delta}$ tuples $(a',r) \in G^{m+1}$ such that \eqref{arv} holds for all $i=1,\dots,K$, and the claim follows.

We now establish Theorem \ref{multi-again}.  Let $G,A,m$ be as in that theorem.  For each $i=1,\dots,m$, we introduce a set $E_i \subset G^{m+1}$, defined as the set of all tuples $(a_1,\dots,a_m,s) \in G^{m+1}$ with the property that
$$ (a_1,\dots,a_{i-1},s-a_1-\dots-a_{i-1}-a_{i+1}-\dots-a_m,a_{i+1},\dots,a_m) \in A.$$
Observe that if $(a_1,\dots,a_m,s)$ lies in the intersection $\bigcap_{i=1}^m E_i$ of all the $E_i$, then by setting $r := s - a_1 - \dots - a_m$, we have $(a_1,\dots,a_m)+ r e_i \in A$ for all $i=1,\dots,m$.  Thus it will suffice to show that
$$
|\bigcap_{i=1}^m E_i| \gg_{m,\delta} |G|^{m+1}.$$
Let $\eps>0$ be a sufficiently small quantity depending on $m,\delta$ to be chosen later.  Suppose for sake of contradiction that
$$
|\bigcap_{i=1}^m E_i| < \eps |G|^{m+1}.$$
 Observe that each $E_i$ is \emph{$i$-invariant} in the sense that the assertion that a given tuple $(a_1,\dots,a_m,s) \in G^{m+1}$ lies in $E_i$ does not depend on the $i^{\operatorname{th}}$ coordinate $a_i$.  Because of this, we may apply the hypergraph removal lemma (see e.g. \cite[Theorem 1.13]{tao-hyper} and conclude (if $\eps$ is small enough depending on $m,\delta$) that there exist $i$-invariant perturbations $E'_i$ of $E_i$ with
\begin{equation}\label{ii-small}
|E'_i \Delta E_i| < \frac{\delta}{m} |G|^{m+1}
\end{equation}
 such that 
\begin{equation}\label{eim}
\bigcap_{i=1}^m E'_i = \emptyset.
\end{equation}
 
We now intersect $E_i$, $E'_i$ with the hyperplane
$$ \Sigma := \{ (a_1,\dots,a_m, a_1+\dots+a_m): a_1,\dots,a_m \in G \}.$$
As this hyperplane sits transversely with respect to the $i$-invariant set $E'_i \Delta E_i$, we conclude from \eqref{ii-small} that
$$ |(E'_i \Delta E_i) \cap \Sigma| < \frac{\delta}{m} |G|^m$$
and hence from the union bound and \eqref{eim}
$$ |\bigcap_{i=1}^m E_i \cap \Sigma| < \delta |G|^m.$$
On the other hand, since $(a_1,\dots,a_m,a_1+\dots+a_m) \in \bigcap_{i=1}^m E_i\cap \Sigma$ whenever $(a_1,\dots,a_m) \in A$, we have
$$ |\bigcap_{i=1}^m E_i \cap \Sigma| \geq |A| \geq \delta |G|^m,$$
giving the desired contradiction.  This completes the proof of Theorem \ref{multi-again} and hence Theorem \ref{multi}.

\end{document}